\let\oldmarginpar\marginpar
\renewcommand\marginpar[1]{\-\oldmarginpar[\raggedleft\footnotesize #1]%
{\raggedright\footnotesize #1}}
\theoremstyle{plain}
\newtheorem{thm}{Theorem}[section]
\newtheorem{cor}[thm]{Corollary}
\newtheorem{prop}[thm]{Proposition}
\newtheorem{lemma}[thm]{Lemma}
\theoremstyle{definition}
\newtheorem{defn}[thm]{Definition}
\DeclareMathOperator{\RF}{RF}
\DeclareMathOperator{\Aut}{Aut} \DeclareMathOperator{\Out}{Out}
\DeclareMathOperator{\SL}{SL} 
\DeclareMathOperator{\GL}{GL} \DeclareMathOperator{\PGL}{PGL}
\DeclareMathOperator{\SU}{SU} 
\DeclareMathOperator{\Sp}{Sp}
\DeclareMathOperator{\lcm}{lcm}
\DeclareMathOperator{\D}{D}
\DeclareMathOperator{\Epi}{Epi}
\DeclareMathOperator{\Inn}{Inn}
\newcommand{\Z}{\mathbb{Z}}
\newcommand{\N}{\mathbb{N}}
\newcommand{\Q}{\mathbb{Q}}
\newcommand{\mD}{\mathcal D}
\newcommand{\mO}{\mathcal O}
\newcommand{\mF}{\mathcal F}
\newcommand{\bK}{\mathbb K}
\newcommand{\dc}{\mathrm{dchar}}
\newcommand{\Sym}{\text{Sym}}
\title[]{Linearity criteria for automorphism groups of malabelian
groups}
\begin{document}

\author[T. Koberda]{Thomas Koberda}
\address{Thomas Koberda, Department of Mathematics, University of Virginia, Charlottesville, VA 22904}
\email{thomas.koberda@gmail.com}

\author[M. Pengitore]{Mark Pengitore}
\address{Mark Pengitore, Institute of Mathematics of Polish Academy of Sciences, Warsaw, Poland}
\email{mpengitore@impan.pl}

\begin{abstract}
Let $G$ be a finitely generated malabelian group, let $A\leq\Out(G)$
be a finitely generated
subgroup, and let $\Gamma_{G,A}$ denote the preimage of $A$ in
$\Aut(G)$. We give a general criterion for the linearity of
$\Gamma_{G,A}$ in terms of surjections from $G$ to finite simple groups
of Lie type.
\end{abstract}

\maketitle
\tableofcontents
\section{Introduction}

In this paper, we investigate residual finiteness growth for certain
classes of groups, in relation to
linearity of their automorphism groups. Of particular interest to us are \emph{malabelian groups}, which are groups in which the centralizer of
every nontrivial conjugacy class is trivial. Typical examples of
malabelian groups are nonabelian free groups, hyperbolic surface groups, and
in general nonelementary hyperbolic groups. We are motivated particularly
by the question of the linearity of mapping class groups of surfaces
of finite type; this is an old question, which
is explicitly asked in Birman's
 1974 book~\cite{birman1974braids} (Problem 30 in the appendix).
 In general, this question is well-known and appears
 in both Farb's~\cite{Farb-survey} and Birman's~\cite{birman-survey}
 articles in the 2006 ``Problems in Mapping Class Groups" volume; see also~\cite{margalit-problems}.

We will develop the
machinery of residual finiteness growth of groups
that was originally introduced by Bou-Rabee~\cite{bou_rabee2010}, and
adapt it to the
study of automorphism groups of residually finite groups, thus
generalizing work of Bou-Rabee and McReynolds~\cite{bou_rabee-McReynolds2011,bou_rabee-McReynolds2016}.

\subsection{Residual finiteness growth}
Let $G$ be a finitely generated group, and fix a finite generating set $X$ for $G$. As is standard, for an element $g\in G$, we write $\|g\|_X$ for the minimal
length of a word representing $g$ in the generating set $X$.
\begin{defn}
We say that $G$ is \emph{residually finite} if for each nontrivial element $x \in G$, there exists an epimorphism $\varphi \colon G \longrightarrow Q$ to a finite group such that $\varphi(x) \neq 1.$ 
\end{defn}
The theory of \emph{effective residual finiteness}, also known as quantitative residual finiteness growth,
measures the difficulty of separating
a nontrivial element from the identity in a finite quotient.

To articulate these concepts precisely,
define the \emph{residual finiteness depth function} \[\D_G \colon G \backslash \{1\} \longrightarrow \mathbb{N} \cup \{\infty\}\]  by 
$$
\D_G(g) = \text{min}\{ |H| \: : \exists \: \varphi \colon G \longrightarrow H \text{ s.t. } |H| < \infty \text{ and } \varphi(g) \neq 1\},
$$
with the understanding that $\D_G(g) = \infty$ if no such finite quotient exists. By definition, $G$ is residually finite if and only if the function $\D_G(g)$ is finite for all nontrivial elements in $G$. Thus, we define
the \emph{residual finiteness growth function} $\RF_{G,X} \colon \mathbb{N} \longrightarrow \mathbb{N}$ by
$$
\RF_{G,X}(n) = \text{max}\{  \D_G(g) \: : \: \|g\|_X \leq n \text{ and } g \neq 1 \}.
$$
Given two finite generating sets $X_1$ and $X_2$, it is easy to see that $\RF_{G,X_1}(n) \approx \RF_{G,X_2}(n)$, i.e.~there are positive constants
$A_i$ and $B_i$ for $i\in\{1,2\}$ such that \[\RF_{G,X_1}(n)\leq A_1\cdot \RF_{G,X_2}(B_1\cdot n)\quad\textrm{and}\quad \RF_{G,X_2}(n)\leq A_2\cdot \RF_{G,X_1}(B_2\cdot n).\] Thus, when concerned with the coarse growth of the function $\RF$, we will suppress the
notation of the generating set and concern ourselves only
with the large scale behavior of the
function $\RF_G(n)$.

There is an extensive literature studying the asymptotic behavior for the function $\RF_G(n)$ and related functions for many classes of groups;
see \cite{dere2022survey} and the references
therein for an overview. A natural avenue for the study of
$\RF_{G}(n)$ is the
characterization of classes of groups $G$ based on the
large scale behavior of $\RF_{G}(n)$.

In the present work, we are most
interested in linearity of automorphism groups.
Finitely generated linear groups
are characterized
group theoretically by a result of Lubotzky~\cite{lubotzky-linear}, and
here we wish to give a criterion for linearity of the automorphism group of
a group $G$
in terms of the residual finiteness growth of $G$.
A result of Bou-Rabee--McReynolds~\cite{bou_rabee-McReynolds2011}
shows that for a finitely generated subgroup $G$ of a finite dimensional
linear group $\mathrm{GL}_{\ell}(\mathbb K)$, the growth of
$\RF_G(n)$ is bounded above by a polynomial function. A partial converse,
proved in later work of Bou-Rabee--McReynolds~\cite{bou_rabee-McReynolds2016},
says that hyperbolic groups $G$ for which there is a natural number $d$
and a constant $C>0$ such that $\RF_{G,S}(n) \leq C\cdot n^d$ can be
realized as subgroups of $\GL_{\ell}(\mathbb K)$; here
$\RF_{G,S}(n)$ is defined similarly to $\RF_G(n)$, except that the
homomorphisms are required to be epimorphisms to nonabelian finite simple
groups. The latter result applies more generally to \emph{uniformly
malabelian groups}, which we will define shortly and which are central to
the present work.

Following \cite{bou_rabee-McReynolds2016}, the above definitions are easily relativized to restricted classes of quotients:
\begin{defn}
    If $\mF$ is a class of finite groups, we
define $\D_{G,\mF}(x)$ identically to $\D_G(x)$, with the proviso that the target groups for the homomorphisms are required to lie in $\mF$. The residual finiteness growth function $\RF_{G,\mF}(n)$
 is defined by maximizing $\D_{G,\mF}(x)$ over the $n$-ball with respect to a finite generating set.
\end{defn}

Except for when we discuss finite simple groups of Lie type,
the symbol $G$ will refer to an infinite
group with trivial center. We will also assume, unless otherwise noted,
that $G$ is residually finite; this latter assumption implies that
$\Aut(G)$ is residually finite. Since $G$ is center-free, we have $G \cong \Inn(G)$. Each subgroup $A \leq \Out(G)$ gives rise to extension of $G$ written as
\[
1 \longrightarrow G \longrightarrow \Gamma_{G,A} \longrightarrow A \longrightarrow 1,
\]
where $\Gamma_{G,A} = q^{-1}(A)$, and where here $q \colon \Aut(G) \longrightarrow \Out(G)$ is the natural projection.
\begin{defn}
  If $A \leq \Out(G)$ is a subgroup, we define $\D_{G, \mF^A}(x)$ identically to $\D_{G, \mF}(x)$ except the quotients appearing in the depth function
are required to be $\Gamma_{G,A}$--invariant (i.e.~the kernel must be
invariant under the conjugation action of $\Gamma_{G,A}$). The function
$\RF_{G,\mF^A}(n)$ is defined analogously, by maximizing $\D_{G,\mF^A}(x)$ over the $n$-ball with respect to a finite generating subset.
\end{defn}

A group $G$ is said to be \emph{malabelian} if for every pair $g,h\in G$ of nontrivial elements, there is a conjugate $khk^{-1}$ of $h$ such that
$[g,khk^{-1}]\neq 1$; a finitely generated group
$G$ is said to be \emph{uniformly malabelian} if there is a constant $\kappa>0$ such that the element $k$ can be chosen to satisfy $\|k\|_X \leq \kappa$; in other words, a finitely generated group $G$ is uniformly malabelian if and only if
there exists a finite set $T\subseteq G$ such that for any nontrivial
$g,h\in G$, we have $[g,khk^{-1}]\neq 1$ for some $k\in T$.
Nonabelian free groups, surface groups, and in general all
nonelementary hyperbolic groups are examples of uniformly malabelian groups. 
Thompson's group F provides an example of a malabelian group that is not hyperbolic. We will discuss malabelian groups
in more detail in Section~\ref{ss:malabelian}.

Finite simple groups of Lie type will figure prominently in this paper;
the reader may find definitions and
a discussion in Section~\ref{ss:finite-simple}. A finite simple
group $H=H(q)$ of Lie type comes in one of finitely many families, and
the parameter $q=p^n$ parametrizes a finite extension of a prime field
$\mathbb F_p$. We say that a class $\mathcal H=
\{H_i(q_i)\}_{i\in\N}$ of finite simple
groups of Lie type are \emph{extension-bounded} if there is an $e\in \N$
such that for each $i$, the parameter $q_i$ satisfies $q_i= p_i^{n_i}$
with $n_i\leq e$. For a fixed $e$ which works for a class $\mathcal H$,
we say $\mathcal H$ is \emph{$e$--extension-bounded}.

\begin{thm}\label{thm:main-growth}
Let $G$ be a finitely generated, residually finite,
uniformly malabelian group. Suppose that:
\begin{itemize}
    \item $G$ has an infinite order element;
    \item $A\leq \Out(G)$ is a finitely generated subgroup;
    \item $\mF$ denotes the class of finite products of finite simple groups of Lie type;
    \item for each $e\in\N$, the class $\mF_e\subseteq\mF$ denotes a collection of finite products of $e$--extension-bounded
    finite simple groups of Lie type.
\end{itemize}
Then the following hold:
\begin{enumerate}
\item
Suppose that
there is a finite index subgroup $B \leq \Gamma_{G,A}$, a $B$-invariant finite index normal subgroup $H \trianglelefteq G$, and natural numbers $d$
and $e$ such that \[\RF_{H,\mF_e^{B/ H}}(n)\preceq n^d.\] Then
there exists a field $\mathbb K$ and a natural number $\ell$ such that $\Gamma_{G,A}\leq \GL_{\ell}(\bK)$.
\item
Suppose conversely that
$\Gamma_{G,A}\leq \GL_{\ell}(\bK)$. Then there exists a finite index subgroup $B \leq \Gamma_{G,A}$, a $B$-invariant finite index normal subgroup $H \trianglelefteq G$, and a natural number $d$ such that
\[\RF_{H,\mF^{B/ H}}(n)\preceq n^d.\] Moreover, if $\mathbb K$ has
characteristic zero  then for some $e\in\N$, we
have \[\RF_{H,\mF_e^{B/ H}}(n)\preceq n^d.\]
\end{enumerate}
\end{thm}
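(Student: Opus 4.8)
The plan is to treat the two implications separately, since they use genuinely different machinery.

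For part~(1), the goal is to produce a faithful finite-dimensional linear representation of $\Gamma_{G,A}$ from the polynomial bound on $\RF_{H,\mF_e^{B/H}}$. The strategy is to pass to the finite-index subgroup $B$ and its invariant normal subgroup $H$, and use the polynomial residual finiteness bound over $e$-extension-bounded finite simple groups of Lie type to extract a sequence of $B$-invariant finite quotients $H\to Q_i$, with $|Q_i|$ bounded polynomially in terms of the word-length of the elements being separated, and with each $Q_i$ a product of $e$-extension-bounded finite simple groups of Lie type. The uniform malabelian hypothesis and the existence of an infinite-order element are what force these quotients to be ``large" in a controlled way: malabelianness prevents the separating quotients from being abelian (a centralizer-triviality argument), and the infinite-order element gives an element whose depth grows, pinning down the field parameters. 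Each such quotient $Q_i$, being a product of finite simple groups of Lie type of bounded extension degree and polynomially bounded order, embeds into $\GL_{\ell_i}(\overline{\mathbb F}_{p_i})$ with $\ell_i$ bounded independently of $i$ (the rank is controlled by $\log|Q_i|$, hence by $d\log n$, but the dimension of the natural representation of a simple group of Lie type is at worst polynomial in the rank, and we only need boundedness after noting the Lie type itself takes finitely many values once we fix $G$). Assembling these representations — via a Mal'cev-style or Lubotzky-style argument using that $H$ is finitely generated and that the $B$-action is compatible — yields a faithful linear representation of $B$, and hence of the finite-index overgroup $\Gamma_{G,A}$, over an appropriate field $\mathbb K$ (a non-principal ultraproduct of the fields $\overline{\mathbb F}_{p_i}$, or a single field if the characteristics are bounded). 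The linearity of $\Gamma_{G,A}$ then follows from linearity of $B$ by the standard induced-representation construction for finite-index subgroups.

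For part~(2), we assume $\Gamma_{G,A}\leq\GL_\ell(\mathbb K)$ and must produce the finite-index $B$, the $B$-invariant normal $H\trianglelefteq G$, and the polynomial bound on $\RF_{H,\mF^{B/H}}$. Here the main input is the Bou-Rabee--McReynolds result cited in the introduction: a finitely generated subgroup of $\GL_\ell(\mathbb K)$ has polynomially bounded residual finiteness growth, and in fact (via strong approximation / the structure of finitely generated linear groups, after passing to a finite-index subgroup that is Zariski-dense in its connected semisimple-up-to-radical hull) one can arrange the separating quotients to be products of finite simple groups of Lie type. Concretely: take $B$ finite index in $\Gamma_{G,A}$ so that its Zariski closure has the right form, set $H = B\cap G$ (which is $B$-invariant, finite index in $G$, since $G$ is normal in $\Gamma_{G,A}$), and observe that any finite quotient of $B$ restricts to a $B$-invariant finite quotient of $H$. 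The polynomial bound $\RF_{H,\mF^{B/H}}(n)\preceq n^d$ is then inherited from the polynomial RF growth of the linear group $B$ together with the control on the quotient type coming from strong approximation. For the characteristic-zero refinement: when $\operatorname{char}\mathbb K = 0$, a finitely generated subgroup lands in $\GL_\ell(R)$ for a finitely generated integral domain $R$, and reduction modulo primes of $R$ produces quotients $\SL_\ell(\mathbb F_q)$-type pieces where $q = p^n$ with $n$ bounded by the degree of the relevant residue field extension — which is bounded in terms of $R$ alone — giving $e$-extension-boundedness for a single $e$.

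The main obstacle I expect is the synthesis step in part~(1): turning the sequence of uniformly-bounded-dimension representations of the finite quotients $Q_i$ into a single faithful representation of $H$ in bounded dimension. The subtlety is that the fields $\overline{\mathbb F}_{p_i}$ vary and the representations are a priori unrelated, so one must either invoke an ultraproduct argument (and then argue that a finitely generated group faithfully represented over an ultraproduct of fields is linear over one of them — this is essentially Mal'cev's theorem on residual properties of linear groups, or can be extracted from Lubotzky's linearity criterion) or directly verify Lubotzky's criterion for $H$ (a ``congruence subgroup"-type condition on a finite subset of the quotients). Ensuring that the $B$-action is carried along faithfully — i.e.\ that we get a representation of all of $\Gamma_{G,A}$ and not merely of $G$ or $H$ — requires using that the quotients are $B$-invariant and that $A$ (hence $\Gamma_{G,A}$) is finitely generated, so that finitely many of the quotients suffice to separate the relevant finite-order phenomena in the extension. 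A secondary technical point is confirming that the ``finite products of finite simple groups of Lie type" appearing really can be taken with the Lie type drawn from a fixed finite list; this follows because $G$ is finitely generated, so only finitely many Lie types occur among quotients of bounded order, but it needs to be stated carefully.
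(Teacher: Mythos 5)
Your high-level architecture (forward direction via bounded-rank quotients and an ultraproduct; converse via specialization of matrix entries) matches the paper's, but both directions have a gap at the step that actually does the work.

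In part (1), the claim that the quotients $Q_i$ embed in $\GL_{\ell_i}$ with $\ell_i$ bounded is the crux, and your justification for it --- that ``the Lie type itself takes finitely many values once we fix $G$'' because ``only finitely many Lie types occur among quotients of bounded order'' --- fails: the separating quotients have order up to $Cn^d$, which is unbounded as $n\to\infty$, and a finitely generated group (e.g.\ a free group) surjects onto simple groups of Lie type of unbounded rank, such as $\PSL_r(2)$ with $|\PSL_r(2)|\approx 2^{r^2}\leq n^d$ and $r\sim\sqrt{d\log n}$. The paper's mechanism is quantitative and uses the infinite-order element $a_0$ together with uniform malabelianness in an essential way: for each nontrivial $a$ one forms a least common multiple $k_a$ of $\{[b_0ab_0^{-1},a_0],a_0^2,\dots,a_0^{n(a)}\}$ of length $O(n(a)^3)$ (Lemma~\ref{lem: length lcm}); any quotient $Q$ in which $k_a$ survives then contains an element of order at least $n(a)$, so $\log|Q|/\log(m_1(Q))\leq 3d+o(1)$, and the Bou-Rabee--McReynolds criterion (Lemma~\ref{lem:rank_inequality1}) converts this ratio bound into a uniform bound on the rank of $\Aut(Q)$ --- not just of $Q$, which is what one needs since $\Gamma_{G,A}$ is only residually (automorphism groups of the $Q$'s). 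This last step is also where $e$--extension-boundedness enters (to control field automorphisms), and where the multiplicity $\ell_a$ of the product $H_a^{\ell_a}$ must be bounded via $n(a)^{\ell_a}\leq|H_a^{\ell_a}|\leq Cn(a)^{3d}$; neither point is addressed in your sketch.

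In part (2) you take a genuinely different route (strong approximation) from the paper (Larsen--Pink), but the proposal is missing the step that certifies that a \emph{given short element} $a$ survives in a quotient of semisimple type of polynomially bounded order --- strong approximation produces semisimple quotients but says nothing about the depth of a specific element, and the Zariski closure may have a nontrivial solvable radical. The paper's device is to replace $a$ by an iterated commutator $h=w_{m,a}$ with $\|h\|\leq C\|a\|$ lying in $D^m(G)$ for $m$ exceeding the Zassenhaus bound on derived lengths of solvable subgroups of $\GL_\ell$ (Lemma~\ref{lem:bound length of element in D^n(G)}, Proposition~\ref{prop:lin-solvable}); then in any finite quotient $Q\leq\GL_\ell(p)$ where $h$ survives, the Larsen--Pink filtration $Q_3\leq Q_2\leq Q_1$ forces $a$ to survive in the semisimple layer $Q_1/Q_2$. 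Separately, your claim that the residue field degrees are ``bounded in terms of $R$ alone'' is false ($\Z[T]$ has residue fields $\mathbb F_{p^n}$ for all $n$); the paper instead specializes down to the prime field $\mathbb F_p$ via the one-variable trace-polynomial substitution and integer evaluation, and then deduces $e$--extension-boundedness from Corollary~\ref{cor:lin-extension-bound}.
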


\subsection{Mapping class group motivation}
As we have alluded above, the principal motivation for this work is the linearity problem for the mapping class group.
Let $S_g$ be a closed oriented surface of genus $g\geq 2$, and let
$S_{g,1}$ be a genus $g$ surface with one boundary component.  The
Dehn--Nielsen--Baer theorem identifies the extended mapping class group
$\operatorname{Mod}^{\pm}(S_g)$ with $\operatorname{Out}(\pi_1(S_g))$,
with the orientation-preserving mapping class group corresponding to an
index-two subgroup.  For $S_{g,1}$, the corresponding version identifies
the mapping class group with the subgroup of $\operatorname{Aut}(\pi_1(S_{g,1}))$
preserving the boundary class, with the precise formulation depending on
whether mapping classes are required to fix the boundary pointwise.  Since
$\pi_1(S_g)$ is a closed surface group and $\pi_1(S_{g,1})\cong F_{2g}$,
these are uniformly malabelian examples.
The linearity problem is precisely formulated as whether or not $\operatorname{Mod}^{\pm}(S_g)$ admits a faithful, finite dimensional representation in characteristic zero, and is open for $g\geq 3$.

Thus, Theorem~\ref{thm:main-growth} may be viewed as a residual-finiteness-growth
reformulation of the linearity problem for these mapping class groups (and indeed a substantially larger class of automorphism groups, including handlebody groups and automorphism groups of free groups).  In
particular, after passing to the finite-index subgroups and invariant
finite-index normal subgroups appearing in the theorem, a polynomial bound
for the relevant invariant residual finiteness growth function, with respect
to finite products of extension-bounded finite simple groups of Lie type,
would imply linearity of the corresponding mapping class group.  Conversely,
a super-polynomial lower bound for every such function would obstruct
linearity by the contrapositive.

\subsection{Plan of the paper}
Sections~\ref{s:prelim-1} and~\ref{s:prelim-2} gather general facts about
finite simple groups and their automorphisms, ultraproducts of groups, 
malabelian groups, and finitely generated linear groups. Section~\ref{s:malabelian} gathers facts about semisimple
quotients of groups, especially with regard to malabelian groups. The
main general results relating residual finiteness and linearity are proved in Section~\ref{s:rf-lin}.

\section{General group theoretic preliminaries}\label{s:prelim-1}

\subsection{Generalities on groups}
The basic reference for this section is~\cite{aschbacher}.
We adopt the commutator convention $[x,y] = x^{-1}y^{-1}xy$. For a normal subgroup $H \trianglelefteq G$, we write $q_H \colon G \longrightarrow G/H$ for the natural projection, and $q = q_H$ and $\bar{x} = q_H(x)$ when the subgroup $H$ is clear from context. The letter $q$ will generally be reserved for quotients of groups or for a power of a prime; this will
generally not lead to confusion.

We will generally write $1=1_G$ for the identity element of a group $G$, and the trivial group will
be distinguished by $\{1\}$. As is standard, for a finite group $G$ we write $|G|$ for its order, and for an element $x \in G$, we write $|x|$ for the
order of $x$, and following classical finite group theory notation we write
$m_1(G) = \max_{x \in G}|x| $.  For a finite generating set $X$ for $G$, we denote the length of $g \in G$ with respect to $X$ by $\|g\|_X$, and we suppress the subscript when the finite generating set is clear from context.
We let $D^{i}(G)$ be the $i^{th}$ term of the derived series of $G$. We denote the center of $G$ by $Z(G)$. The set of epimorphisms from $G$ to $H$ is written $\Epi(G,H)$.

We will reserve $\mathbb{K}$ for a field, with algebraic closure given by $\overline{\mathbb{K}}$. We write $\text{char}(\mathbb{K})$ for the
characteristic of $\mathbb K$ and write $\mathbb{F}_q$ for the field of $q$ elements. The field $\mathbb{K}(T_1, \ldots, T_s)$ is the field of rational functions in the variables $T_1, \ldots, T_s$ with coefficients in $\mathbb{K}$. Given a ring $R$ and a finite collection of indeterminates $\{T_1, \ldots, T_s\}$, we write the polynomial ring with $s$ variables with coefficients in $R$ as $R[T_1, \ldots, T_s]$. Given a subring $R \leq \mathbb{K}$, we denote the field of fractions of $R$ by $\text{Frac}(R)$. Given a collection of nonzero primes $S$ in an integral domain $R$, the ring
$R[\frac{1}{S}]$ is the localization of $R$ at $S$; for us, the rings
under consideration will be polynomial rings in finitely many variables
over the integers or over a finite field, their fraction fields, and subrings
of the field of fractions arising from finite sets of nonzero elements
in the polynomial rings. We write $\lcm\{m_1, \ldots, m_s\}$ for the least common multiple of the natural numbers $m_1, \ldots, m_s.$

\subsection{Schur multipliers and Schur covers}
The Schur multiplier $M(G)$ of a group $G$ was originally defined by Schur~\cite{schur1,schur2,wiegold-schur}, and can be viewed
as an obstruction to lifting projective linear representations of finite
groups to linear
representations. Much of the following discussion can be found in  \cite{schur_multiplier} and \cite[6.9]{weibel}.

The Schur multiplier $M(G)$ is identified with the second homology group
$H_2(G, \mathbb{Z})$. When $G$ is itself finite, then $M(G)$ is a finite abelian
group whose exponent divides the order of $G$.

Let $G$ be a fixed perfect group. Given any two perfect central extensions of $G$, written
\[
E_1: 1 \longrightarrow A_1 \longrightarrow H_1 \longrightarrow G \longrightarrow 1
\]
and
\[
E_2: 1 \longrightarrow A_2 \longrightarrow H_2 \longrightarrow G \longrightarrow 1,
\]
we say that $E_1$ \emph{covers} $E_2$ if there exists a homomorphism $f \colon H_1 \longrightarrow H_2$ making the diagram of extensions commute.

A perfect central extension is \emph{universal} if it uniquely covers any perfect central extension of $G$. We note that if $E_1$ and $E_2$ are universal central extensions of $G$, then $E_1$ covers $E_2$ and $E_2$ covers $E_1.$ A group $G$ admits a universal central extension if and only if $G$ is perfect. When $G$ admits a universal central extension, then
this universal central extension is called the \emph{Schur cover} of $G$. The Schur cover of a perfect group $G$ is 
written $\tilde{G}$.

\subsection{Finite simple groups of Lie type}\label{ss:finite-simple}
We record some of the theory of simple linear algebraic groups and
groups of points fixed by Frobenius and Steinberg endomorphisms.
General references for this section are~\cite{Borel-linear,Humphreys-linear,malle_testerman}.

\subsubsection{Simple linear algebraic groups and finite groups of Lie type}
Let $\textbf{G}$ be a connected linear algebraic group defined over a field $\mathbb{K}$. We say $\textbf{G}$ is \emph{simple} if $\textbf{G}$ is non-abelian and does not admit any proper connected algebraic normal subgroups. We say that $\textbf{G}$ is \emph{semisimple} if every connected solvable algebraic normal subgroup is trivial.

We say that two $\mathbb{K}$-defined algebraic groups $\textbf{G}$ and $\textbf{H}$ are \emph{isogenous} if there exists a surjective $\mathbb{K}$-defined morphism from $\textbf{G}$ to $\textbf{H}$ with finite kernel; such a map is referred to as an
\emph{isogeny}.  A connected semisimple linear algebraic group
$\textbf{G}$ over a field $\mathbb{K}$ is \emph{simply connected} if every isogeny $f \colon \tilde{\textbf{G}} \longrightarrow \textbf{G}$ is an isomorphism. If $\textbf{G}$ is a $\mathbb{K}$-defined connected semisimple linear algebraic group, then there exists a natural isogeny
\[\begin{tikzcd}
	{\textbf{G}_{sc}} & {\textbf{G}}
	\arrow["\pi", from=1-1, to=1-2]
\end{tikzcd}\]
from a simply connected group $\textbf{G}_{sc}$; the kernel of $\pi$ lies in
the center of $\textbf{G}_{sc}$. The group $\textbf{G}_{sc}$ is unique within its isogeny class, which in turn is determined by a
Dynkin diagram and an indecomposable root system.

Up to isogeny, the classical simple linear algebraic groups over any algebraically closed field correspond to the Dynkin diagrams of the form
\[
A_n \: (n \geq 1), \quad B_n \: (n \geq 2), \quad C_n \: (n \geq 3), \quad D_n \: (n \geq 4)
\]
with the exceptional Dynkin diagrams given by
\[
E_6, \quad E_7, \quad E_8, \quad F_4, \quad G_2.
\]

Let $q$ be a power of the prime $p$. The map $F_q \colon \overline{\mathbb{F}}_q \longrightarrow \overline{\mathbb{F}}_q$ given by $t \longrightarrow t^q$ is called the \emph{$q$--Frobenius automorphism} and fixes the subfield $\mathbb{F}_q$
pointwise. Given a linear algebraic group $\textbf{G}$ defined over
$\overline{\mathbb{F}}_q$ equipped with an embedding $\textbf{G} \hookrightarrow \GL_{\ell}(\overline{\mathbb{F}}_q),$ the map $F_q \colon \textbf{G} \longrightarrow \textbf{G}$ given by \[(a_{ij}) \longrightarrow (a_{ij}^q),\] is a group homomorphism with fixed point subgroup
$$
\textbf{G}^{F_q} = \{ g \in \textbf{G} \: : \: F_q(g) = g\}.
$$
We write $\textbf{G}(q)$ for this subgroup.
We call $F_q$ the \emph{standard Frobenius} of $\textbf{G}$ with respect to $\mathbb{F}_q$. While this map is an isomorphism of groups, it is not an isomorphism of algebraic groups because it is generally not an isomorphism of varieties.

Let $\textbf{G}$ be a connected linear algebraic group defined over $\overline{\mathbb{F}}_p$. A surjective endomorphism $F \colon \textbf{G} \longrightarrow\textbf{G}$ of linear algebraic groups
which has only finitely many fixed points is called
a \emph{Steinberg endomorphism} of $\textbf{G}$. 
We write $\textbf{G}^F$ for the group of fixed points of $F$ on $\textbf{G}$. If $\textbf{G}$ is a semisimple algebraic group defined over $\overline{\mathbb{F}_q}$ with $q = p^f$ with a Steinberg endomorphism $F \colon \textbf{G} \longrightarrow \textbf{G}$, then the finite group of
fixed points 
\[
\textbf{G}^F = \{ g \in \textbf{G} \: : \: F(g) = g\}
\]
is called a \emph{finite group of Lie type}.

If $\{G_i(q_i)\}_{i\in\N}$ is a sequence of finite groups of Lie type,
where $q_i=p_i^{n_i}$, then we say that $\{G_i(q_i)\}_{i\in\N}$ is
\emph{extension-bounded} if there is an $e\in\N$ such that $n_i\leq e$
for all $i\in\N$. For such a class $\{G_i(q_i)\}_{i\in\N}$ and $e$, we say
that $\{G_i(q_i)\}_{i\in\N}$ is \emph{$e$--extension-bounded}.

A classical theorem of Tits
specifies which of
the finite groups of Lie type are simple, modulo their centers,
thus giving rise to \emph{finite simple groups of Lie type}.
\begin{thm}[Tits]\label{tits_thm}
Let $\textbf{G}$ be a connected, simply connected simple linear algebraic group defined over $\overline{\mathbb{F}}_p$ with a Steinberg endomorphism $F \colon \textbf{G}(\overline{\mathbb{F}}_p) \longrightarrow \textbf{G}(\overline{\mathbb{F}}_p).$ Then $\textbf{G}^F$ is perfect and that $\textbf{G}^F / Z(\textbf{G}^F)$ is simple, unless $\textbf{G}^F$ is one of
$$
\SL_2(2),\, \SL_2(3),\, \SU_3(2),\, \Sp_4(2),\, G_2(2),\, \: ^2B_2(2),\, \: ^2G_2(3),\, \:^2F_4(2).
$$
\end{thm}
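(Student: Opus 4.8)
The plan is to equip $\textbf{G}^F$ with its split $(B,N)$-pair of characteristic $p$, to show that $\textbf{G}^F$ is perfect outside a short list of small groups, and then to apply the simplicity theorem of Tits for groups with a $(B,N)$-pair; the exceptional list is exactly the set of small groups for which perfectness fails. For the setup, I would fix an $F$-stable maximal torus $\textbf{T}$ inside an $F$-stable Borel subgroup $\textbf{B}$ with unipotent radical $\textbf{U}$ and put $B=\textbf{B}^F$, $N=N_{\textbf{G}}(\textbf{T})^F$, $H=\textbf{T}^F$, $U=\textbf{U}^F$. The relative Bruhat decomposition of $\textbf{G}^F$, valid for an arbitrary Steinberg endomorphism, makes $(B,N)$ a Tits system with $B=U\rtimes H$, with $U$ a finite $p$-group and hence solvable, with $\bigcap_{n\in N}nUn^{-1}=1$, and with Weyl group $W=N/H$ whose Coxeter diagram is the folding of the Dynkin diagram of $\textbf{G}$ along the automorphism induced by $F$; since the root system of $\textbf{G}$ is indecomposable, the relative root system of $\textbf{G}^F$ is again indecomposable, so $(W,S)$ is irreducible.

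The crux is perfectness, and here simple-connectedness enters decisively through Steinberg's theorem that, because $\textbf{G}$ is simply connected, $\textbf{G}^F$ is generated by the $F$-fixed points $\textbf{U}_\alpha^F$ of its root subgroups. Two things follow. First, each $\textbf{U}_\alpha^F$ is $N$-conjugate into $U$, so $\textbf{G}^F=\langle gUg^{-1}:g\in\textbf{G}^F\rangle$, which is the generation hypothesis needed below. Second, each $\textbf{U}_\alpha^F$ lies in a rank-one subgroup $\langle\textbf{U}_\alpha^F,\textbf{U}_{-\alpha}^F\rangle$ which, according to the type of the $F$-orbit of $\alpha$, is one of $\SL_2(q)$, $\SU_3(q)$, ${}^2B_2(q)$, or ${}^2G_2(q)$ for a suitable power $q$ of $p$; when such a rank-one group is perfect, $\textbf{U}_\alpha^F$ lies in its derived subgroup and hence in $[\textbf{G}^F,\textbf{G}^F]$. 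All of these rank-one groups are perfect except $\SL_2(2)$, $\SL_2(3)$, $\SU_3(2)$, ${}^2B_2(2)$, ${}^2G_2(3)$, so $\textbf{G}^F$ is perfect whenever none of its rank-one subgroups is one of these five; in the remaining cases the ground field is very small, and one checks perfectness directly from the (twisted) Chevalley commutator relations. The outcome is that perfectness fails for exactly the eight groups
\[\SL_2(2),\ \SL_2(3),\ \SU_3(2),\ \Sp_4(2),\ G_2(2),\ {}^2B_2(2),\ {}^2G_2(3),\ {}^2F_4(2).\]

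Outside these eight cases, all hypotheses of Tits's simplicity theorem are in place: $(B,N)$ is a Tits system with $(W,S)$ irreducible, $U\trianglelefteq B$ with $B=U(B\cap N)$ and $\bigcap_{n\in N}nUn^{-1}=1$ and $U$ solvable, and $\textbf{G}^F=\langle gUg^{-1}:g\in\textbf{G}^F\rangle$; Tits's theorem then says that every normal subgroup of $G$ is either contained in $Z:=\bigcap_{g\in G}gBg^{-1}$ or contains $[G,G]$, so that a perfect such $G$ has $G/Z$ simple. Applying this with $G=\textbf{G}^F$ yields that $\textbf{G}^F/Z$ is simple, and it remains only to identify $Z$ with $Z(\textbf{G}^F)$. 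The inclusion $Z(\textbf{G}^F)\subseteq Z$ is immediate. Conversely, $Z$ lies in every Borel subgroup of $\textbf{G}^F$, in particular in $\textbf{B}^F$ and in an opposite Borel, and the intersection of two opposite Borels is a maximal torus, so $Z\subseteq\textbf{T}^F$; since $Z$ is normal in $\textbf{G}^F$ and $\textbf{T}^F$ normalizes each root subgroup, for $z\in Z$ and $u\in\textbf{U}_\alpha^F$ we get $[z,u]\in Z\cap\textbf{U}_\alpha^F\subseteq\textbf{T}^F\cap\textbf{U}_\alpha^F=1$, so $z$ centralizes every root subgroup and hence all of $\textbf{G}^F$. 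Therefore $Z=Z(\textbf{G}^F)$ and $\textbf{G}^F/Z(\textbf{G}^F)$ is simple.

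The step I expect to be the main obstacle is the perfectness analysis, and in particular pinning down the exceptional list. The rank-one reduction cannot by itself establish non-perfectness: for $\Sp_4(2)$, whose rank-one subgroups are copies of the non-perfect group $\SL_2(2)$, one must still verify by hand that the commutators among those subgroups fail to generate the whole group, whereas for $\SL_3(2)$---also generated by copies of $\SL_2(2)$---the opposite holds and the group is perfect. Determining which small groups of Lie type fail to be perfect---there turn out to be exactly eight, including the three ``composite'' exceptions $\Sp_4(2)$, $G_2(2)$, and ${}^2F_4(2)$---is the genuinely computational and special core of the theorem.
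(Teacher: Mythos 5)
Your sketch is the standard (and correct) argument: split $BN$-pair from an $F$-stable Borel and torus, perfectness via Steinberg's generation by root subgroups and the rank-one reduction, Tits's simplicity criterion, and identification of $\bigcap_g gBg^{-1}$ with the center via opposite Borels. The paper states this as a classical result of Tits and does not reproduce a proof, so there is nothing to compare against beyond noting that your outline matches the proof found in the standard references (Steinberg's lectures, Malle--Testerman Ch.~24); you also correctly flag that the only genuinely case-by-case content is verifying non-perfectness for the three composite exceptions $\Sp_4(2)$, $G_2(2)$, ${}^2F_4(2)$ alongside the five non-perfect rank-one groups.
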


The finite simple groups of Lie type, their
Schur multipliers and corresponding Schur covers, are all well-known; the
reader may find these listed in~\cite{malle_testerman}, tables
24.2 and 24.3. See also \cite[Remark 9.17]{malle_testerman} for more details.

One fact we will require is the following, which can be seen from
examining the orders of finite simple groups of Lie type:
\begin{lemma}\label{lem:q-divides}
    Suppose $G(q)$ is a finite simple group of Lie type, where here
    $q=p^n$. Then $q$ divides $|G(q)|$.
\end{lemma}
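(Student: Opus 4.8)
The plan is to prove Lemma~\ref{lem:q-divides} by a direct inspection of the order formulas for the finite simple groups of Lie type, organized by their rough shape rather than case by case. Recall that if $\mathbf{G}$ is connected, simply connected and simple over $\overline{\mathbb{F}}_p$ with Steinberg endomorphism $F$, then the order of $\mathbf{G}^F$ has the form
\[
|\mathbf{G}^F| = \frac{1}{d}\, q^{N} \prod_{i} \bigl(q^{d_i} - \varepsilon_i\bigr),
\]
where $N$ is the number of positive roots of the ambient root system (so $N \geq 1$ since $\mathbf{G}$ is nonabelian), the $d_i$ are positive integers (the degrees of the fundamental invariants, possibly adjusted for twisted types), each $\varepsilon_i \in \{+1,-1\}$ or a root of unity in the twisted cases, and $d = |Z(\mathbf{G}^F)|$ is the order of the center, which divides the order of the fundamental group and is in particular coprime to $p$ (or at worst a small power that is still a divisor of the displayed product appropriately — more precisely, $d$ divides $n+1$, $2$, etc., and never introduces a pole in the $p$-part). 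The passage to the simple quotient $G(q) = \mathbf{G}^F/Z(\mathbf{G}^F)$ only divides out by $d$, which is prime to $p$, so the $p$-part of $|G(q)|$ equals the $p$-part of $|\mathbf{G}^F|$, namely exactly $q^N$. Since $q = p^n$ and $N \geq 1$, we get $q \mid q^N \mid |G(q)|$.

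Concretely, the steps I would carry out are: (1) quote from~\cite{malle_testerman} (the order tables, e.g.\ Table 24.1, together with Remark 9.17 already cited in the excerpt) the statement that $|\mathbf{G}^F| = q^N \prod_i (q^{d_i} - \varepsilon_i)$ with $N = |\Phi^+|$ the number of positive roots in the relevant (possibly twisted) root system; (2) observe that $N \geq 1$ because a simple algebraic group is nonabelian and hence has a nontrivial root system; (3) note that $\gcd(d_i, p)$-type issues do not arise: the factor $q^N$ is literally a power of $p$ and is the full $p$-part of $|\mathbf{G}^F|$ since each $q^{d_i} - \varepsilon_i$ is coprime to $p$; (4) recall that $|Z(\mathbf{G}^F)|$ divides the order of the center of the simply connected group, which is the fundamental group of the root system and is always coprime to $p$ (for groups of Lie type in characteristic $p$, the center has order prime to $p$); (5) conclude that the $p$-part of $|G(q)| = |\mathbf{G}^F|/|Z(\mathbf{G}^F)|$ is still $q^N$, hence divisible by $q = p^n$. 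One should also dispatch the Suzuki and Ree families ${}^2B_2(q)$, ${}^2G_2(q)$, ${}^2F_4(q)$ separately, since there $q = 2^{2m+1}$ or $3^{2m+1}$ and the order formulas involve $q^N$ with half-integer-looking exponents in some presentations; but in every normalization the $p$-part is a genuine power of $q$ that is at least $q$ itself, so the conclusion persists.

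I do not anticipate a serious obstacle: the result is essentially a bookkeeping consequence of the standard order formulas, and the only mildly delicate point is making sure that quotienting by the center $Z(\mathbf{G}^F)$ does not remove the factor of $q$ — which it cannot, since $|Z(\mathbf{G}^F)|$ is prime to $p$ for groups of Lie type in defining characteristic $p$. If one wishes to avoid invoking the general shape of the order polynomial, an alternative is simply to point at the explicit list of orders of finite simple groups of Lie type in~\cite[Table 24.1]{malle_testerman} and observe family by family that the defining-characteristic part of the order is a power of $q$ at least $q^1$; this is the cleanest write-up and is what I would include. In either case the proof is short.

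\begin{proof}
We use the standard order formulas for finite groups of Lie type; see~\cite[Table 24.1 and Remark 9.17]{malle_testerman}. If $\mathbf{G}$ is connected, simply connected and simple over $\overline{\mathbb{F}}_p$ with Steinberg endomorphism $F$, then
\[
|\mathbf{G}^F| = q^{N}\prod_{i}\bigl(q^{d_i}-\varepsilon_i\bigr),
\]
where $N = |\Phi^{+}|$ is the number of positive roots of the (possibly twisted) root system of $\mathbf{G}$, the integers $d_i$ are positive, and each $\varepsilon_i$ is $\pm 1$ or a suitable root of unity in the twisted cases; in every case the factor $q^{N}$ is exactly the $p$-part of $|\mathbf{G}^F|$, because each $q^{d_i}-\varepsilon_i$ is coprime to $p$. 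Since $\mathbf{G}$ is simple as an algebraic group it is nonabelian, so its root system is nontrivial and $N\geq 1$.

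Now $G(q)=\mathbf{G}^F/Z(\mathbf{G}^F)$, and for a finite group of Lie type in defining characteristic $p$ the centre $Z(\mathbf{G}^F)$ has order prime to $p$ (it is a subquotient of the fundamental group of the root system). Therefore the $p$-part of $|G(q)|$ equals the $p$-part of $|\mathbf{G}^F|$, which is $q^{N}$ with $N\geq 1$. In particular $q=p^{n}$ divides $q^{N}$, and hence $q$ divides $|G(q)|$. The Suzuki and Ree families ${}^2B_2(q)$, ${}^2G_2(q)$, ${}^2F_4(q)$ are handled the same way: in each the $p$-part of the order is a power of $q$ which is at least $q$.
\end{proof}
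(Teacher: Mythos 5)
Your proof is correct and follows exactly the route the paper intends: the paper offers no written proof of this lemma, only the remark that it ``can be seen from examining the orders of finite simple groups of Lie type,'' and your argument is a careful fleshing-out of that inspection (the $p$-part of $|\mathbf{G}^F|$ is $q^{N}$ with $N=|\Phi^{+}|\geq 1$, and $|Z(\mathbf{G}^F)|$ is prime to $p$, so the $p$-part survives in the simple quotient). The only cosmetic slip is the spurious factor $\frac{1}{d}$ in the order formula of your preliminary discussion --- for the simply connected group one has $|\mathbf{G}^F|=q^{N}\prod_i(q^{d_i}-\varepsilon_i)$ with no such factor, as you correctly write in the proof itself.
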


From examining the orders of general linear groups, we have the
following immediate corollary:

\begin{cor}\label{cor:lin-extension-bound}
    Suppose $q=p^n$ for some $n\in\N$ and let $G(q)$ be a quotient
    of a subgroup $Q\leq \GL_{\ell}(p)$.
    Then $n\leq \binom{\ell}{2}$.
\end{cor}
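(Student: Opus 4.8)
The plan is to turn this into an elementary divisibility statement about group orders. First I would apply Lemma~\ref{lem:q-divides} to conclude that $q = p^{n}$ divides $|G(q)|$. Since $G(q)$ is a quotient of the subgroup $Q \leq \GL_{\ell}(p)$, and $\GL_{\ell}(\mathbb{F}_{p})$ is finite, the group $Q$ is finite and Lagrange's theorem gives that $|G(q)|$ divides $|Q|$, which in turn divides $|\GL_{\ell}(\mathbb{F}_{p})|$. In particular $p^{n}$ divides $|\GL_{\ell}(\mathbb{F}_{p})|$.

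Next I would compute
\[
|\GL_{\ell}(\mathbb{F}_{p})| = \prod_{i=0}^{\ell-1}\bigl(p^{\ell}-p^{i}\bigr) = p^{\binom{\ell}{2}}\prod_{j=1}^{\ell}\bigl(p^{j}-1\bigr),
\]
and observe that every factor $p^{j}-1$ is coprime to $p$, so the exact power of $p$ dividing $|\GL_{\ell}(\mathbb{F}_{p})|$ is $p^{\binom{\ell}{2}}$. Combining this with the divisibility $p^{n}\mid |\GL_{\ell}(\mathbb{F}_{p})|$ forces $p^{n}\mid p^{\binom{\ell}{2}}$, i.e.\ $n\leq\binom{\ell}{2}$, which is exactly the claim.

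I do not expect any real obstacle here: the content is entirely in Lemma~\ref{lem:q-divides} together with the standard order formula for $\GL_{\ell}(\mathbb{F}_{p})$. The only step worth stating explicitly is that being a quotient of $Q$ makes $|G(q)|$ a divisor of $|Q|$, after which the computation of the $p$-part of $|\GL_{\ell}(\mathbb{F}_{p})|$ closes the argument.
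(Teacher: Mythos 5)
Your argument is correct and is exactly the one the paper intends: the corollary is stated as an immediate consequence of Lemma~\ref{lem:q-divides} together with ``examining the orders of general linear groups,'' which is precisely your computation that the $p$-part of $|\GL_{\ell}(\mathbb{F}_{p})|$ is $p^{\binom{\ell}{2}}$. Nothing further is needed.
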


Let $G$ be a center-free finitely generated group, and let $A \leq \Out(G)$ be a finitely generated group. If $N\leq G$ is a normal subgroup such that $Q=G/N$ is isomorphic to a finite
direct product of (possibly different) finite simple groups of Lie type, then $Q$ is a quotient of \emph{semisimple type}, and if $N$ is $\Gamma_{G,A}$-invariant, we say that $Q$ is an $A$-invariant quotient of semisimple type. If $\mathcal Q$ is a family of semisimple type
groups, we say that this family is \emph{extension-bounded} if the family
$\mathcal H$ of finite simple groups of Lie type occurring as factors
of elements of $\mathcal Q$ is extension-bounded.

\subsubsection{Ultraproducts of nonabelian finite simple groups}
For a more detailed discussion of the following material, we refer the reader
to \cite{schoutens}; for general background on ultraproducts and ultrafilters, the reader may consult Section 1.6 in~\cite{hinman-book}. By a \emph{non-principal ultrafilter} $\omega$ on
an infinite set $X$, we mean a collection of subsets of $X$ 
which is:
\begin{enumerate}
    \item Closed under taking finite intersection.
    \item Closed under taking supersets.
    \item Does not contain a least element.
    \item Exhaustive, in the sense that for
    all $D \subset X$, either $D$ or its complement $D^c$ belongs to $\omega$.
\end{enumerate}

In particular, the empty set does not belong to $\omega$. Because $\omega$ is non-principal (i.e.~does not contain a least element), it follows that any co-finite subset belongs to $\omega$. The existence of non-principal ultrafilters follows from the Axiom of Choice, and for any infinite subset $A\subseteq X$
one can find a non-principal ultrafilter $\omega$ on $X$ containing $A$ as 
an element.

Let $\omega$ be a non-principal ultrafilter on $\mathbb{N}$, and let
$\{X_i\}_{i \in \mathbb{N}}$ be a family of nonempty sets. For
\[(x_i), (y_i) \in \prod_{i=1}^\infty X_i\] we write $(x_i) \sim_{\omega} (y_i)$
if and only if $\{ i \: : \: x_i = y_i \} \in \omega$. It is easy to see that $\sim_{\omega}$ forms an equivalence relation on $\prod_{i=1}^\infty X_i$.
Given $(x_i) \in \prod_{i=1}^\infty X_i$, we denote the equivalence class of $(x_i)$
by $(x_i)_\omega$. The \emph{ultraproduct} of the sets
$\{X_i\}_{i \in \mathbb{N}}$
along $\omega$ is given by
$$
X_{\omega} = \left(\prod_{i=1}^\infty X_i \right) \Bigg/ \sim_{\omega}.
$$
Choosing a nonempty subset $Y_i \subset X_i$ for each $i$,
we have $\prod_\omega Y_i$ is canonically identified with a subset of
$\prod_\omega X_i$.

Taking the ultraproduct of a collection of groups $\{G_i\}_{i\in\N}$,
their ultraproduct is naturally a group which is given by
\[G_{\omega}=\left(\prod_{i=1}^{\infty} G_i\right) \Bigg/ N_{\omega},\] where $N_{\omega} = \{ (1_{G_i})_\omega  \}$. An ultraproduct of rings is defined
similarly; it is a standard fact that 
an ultraproduct of fields is again a field which will be
algebraically closed if each factor is algebraically closed.
If $\{\mathbb{K}_i\}_{i\in\N}$ consists of finite fields where each prime characteristic appears at most finitely many times,
then the ultraproduct $\mathbb{K}_{\omega}$ has characteristic $0$. 

Returning to finite simple groups,
if $G$ is a finite simple group of Lie type, there exists a connected, simply connected simple linear algebraic group $\textbf{G}$ and a Steinberg endomorphism such that $\textbf{G}^F / Z(\textbf{G}^F) = G$. We will call $\textbf{G}$ the simple algebraic group \emph{associated} to $G$. Given a finite simple group of Lie type $G = \textbf{G}^T/Z(\textbf{G}^T)$ defined over the algebraic closure of $\mathbb{F}_q$ where $q = p^f$ for some prime $p$, we say that $p$ is the \emph{defining characteristic of $G$} or that $G$ is a finite simple group of Lie type in characteristic $p$. Additionally, we will write $p=\dc(G)$ and say that $G$ is a finite simple group of Lie type in characteristic $p$. When $G = \textbf{G}(q)/Z(\textbf{G}(q))$, we call \emph{$\mathbb{F}_q$ the defining field of $G$}.

We say an infinite collection $\{G_i\}_{i=1}^{\infty}$ of finite products of finite simple groups of Lie type has 
\emph{bounded multiplicity} if there exists a natural number 
$N \in \mathbb{N}$ such that each $G_i$ is isomorphic to a product of at most
$N$ finite simple groups of Lie type.

\subsubsection{Bounds on automorphism groups}
Let $G$ be a finite simple group of Lie type with associated connected, simply connected simple linear algebraic group $\textbf{G}$, defined over $\overline{\mathbb{F}}_p$,
and let \[F \colon \textbf{G}(\overline{\mathbb{F}}_p) \longrightarrow \textbf{G}(\overline{\mathbb{F}}_p)\] be a Steinberg endomorphism
such that $G = \textbf{G}^F / Z(\textbf{G}^F)$.
The next lemma constructs a faithful representation
\[\rho \colon \Aut(G) \longrightarrow \GL_\ell(\overline{\mathbb{F}}_p),\] wherein
$\ell$ depends only on the degree of a faithful projective representation of $G$ and the degree of defining field over the prime field.

\begin{lemma}\label{lemma: rep dimension of Aut}
    Let $G$ be a finite simple group of Lie type, and let $\mathbb{F}_{p^\ell}$ be the defining field of $G$. There exists a constant $C>0$ such that if $d$ is an integer with $G \leq \PGL_d(\mathbb{F}_{p^\ell})$, then \[\Aut(G) \leq \GL_{C\ell d^3} (\mathbb{F}_{p}).\] 
\end{lemma}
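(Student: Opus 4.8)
The plan is to build the representation $\rho$ in two stages: first realize the inner part of $\Aut(G)$, namely $G$ itself, inside $\GL$ of a field extension of $\mathbb F_{p^\ell}$ of controlled degree, and then enlarge to all of $\Aut(G)$ by absorbing the (finite, well-understood) group of outer automorphisms. Recall that $\Aut(G)$ is generated by inner, diagonal, field, and graph automorphisms, and that $\Out(G)$ has order bounded by a constant depending only on the Lie type, the number $\ell$, and (for field automorphisms) the exponent of $p$ in the defining field; crucially, all these automorphisms are realized by semilinear maps with respect to $\mathbb F_{p^\ell}/\mathbb F_p$, or by graph symmetries of the Dynkin diagram, which act on the ambient matrix group by permuting coordinates and transposing/inverting.

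First I would use the hypothesis $G\leq \PGL_d(\mathbb F_{p^\ell})$ to lift to a projective representation of $G$ and then, via the Schur cover, to an honest linear representation $\tilde G \to \GL_d(\mathbb F_{p^{\ell m}})$ for a bounded $m$ (the Schur multiplier of a finite simple group of Lie type is bounded, and splitting the central extension costs at most a bounded field extension); this gives $G\leq \PGL_d(\mathbb F_{p^{\ell m}})$ concretely, hence an embedding $G \hookrightarrow \GL_{d^2}(\mathbb F_{p^{\ell m}})$ by acting on the matrix algebra $M_d$ by conjugation, which is faithful on $\PGL_d$. Now I would handle the field automorphisms: the full group of field automorphisms of $G$ is cyclic of order dividing the degree of the defining field over $\mathbb F_p$, and each is induced by the Frobenius $x\mapsto x^p$ acting entrywise; restricting scalars from $\mathbb F_{p^{\ell m}}$ down to $\mathbb F_p$ replaces $\GL_{d^2}(\mathbb F_{p^{\ell m}})$ by $\GL_{\ell m d^2}(\mathbb F_p)$ and turns the semilinear field automorphisms into honest $\mathbb F_p$-linear maps normalizing the image of $G$. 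Finally, the graph (and graph-field) automorphisms form a group of order at most $6$ and act on the root datum, hence on the ambient $\GL_d$ by a combination of coordinate permutation and inverse-transpose; inverse-transpose is not linear on $\GL_d$ but becomes linear once we have passed to the conjugation action on $M_d$ together with its dual, at the cost of at most doubling $d^2$. Assembling these, $\Aut(G)$ is generated inside $\GL_{\ell m d^2}(\mathbb F_p)$ (up to the bounded constant factors from the Schur cover, the matrix-algebra trick, and dualization) by the image of $G$ and finitely many explicit $\mathbb F_p$-linear normalizing elements, so $\Aut(G)\leq \GL_{C\ell d^3}(\mathbb F_p)$ with $C$ absolute once one checks $m$ and the combinatorial factors are absolutely bounded; the crude bound $d^3$ rather than $d^2$ comfortably absorbs all of these.

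The main obstacle I expect is making the graph automorphisms act linearly while keeping the dimension polynomial in $d$ and the field $\mathbb F_p$ fixed: the naive realization of a triality or duality automorphism is not a linear map on the natural module, and one must either (i) pass to a tensor construction (the module $V\oplus V^*$ for duality, and a suitable $8$-dimensional module for triality of $D_4$) or (ii) exploit that $\Aut(G)$ is anyway a finite group, so it embeds into $\GL$ of its regular representation — but that has dimension $|\Aut(G)|$, which is not polynomial in $d$. The clean route is the conjugation-on-$M_d$ model: there $\PGL_d$ sits faithfully, every automorphism of $G$ that is realized by a semilinear or graph symmetry of $\GL_d$ induces a genuine linear automorphism of $M_d\oplus M_d^*\cong \mathbb F^{2d^2}$ normalizing the image, and the field automorphisms are dealt with by restriction of scalars as above. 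One must double-check, using tables 24.2–24.3 of \cite{malle_testerman}, that no sporadic coincidences force an unbounded $m$ or an unbounded number of generators of $\Out(G)$; this is a finite check over the Lie types, and the exceptional groups in Tits's theorem (Theorem~\ref{tits_thm}) are irrelevant since $G$ is assumed simple. With these checks in hand the dimension count gives $\ell m d^2\cdot(\text{bounded})\leq C\ell d^3$, which completes the proof.
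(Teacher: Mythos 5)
Your overall architecture (Steinberg's decomposition of $\Aut(G)$, the conjugation action on $M_d$ to linearize $\PGL_d$, restriction of scalars to make the field automorphisms $\mathbb F_p$-linear) matches the paper up to the last step, but there is a genuine gap in how you treat the diagonal and graph automorphisms. You assert that \emph{all} automorphisms of $G$ ``are realized by semilinear maps with respect to $\mathbb F_{p^\ell}/\mathbb F_p$, or by graph symmetries of the Dynkin diagram'' acting on the ambient $\GL_d$. That is true for the natural (adjoint) embedding, but the lemma quantifies over an \emph{arbitrary} integer $d$ with $G\leq\PGL_d(\mathbb F_{p^\ell})$. An outer automorphism $\sigma$ acts on the set of isomorphism classes of $d$-dimensional projective representations by twisting, and there is no reason the given class is fixed; when it is not, $\sigma$ does not extend to any (semi)linear symmetry of the ambient space, and hence is not realized on $M_d\oplus M_d^*$ either. (Concretely: the two Weil constituents of $\SL_2(q)$ of degree $(q-1)/2$ are interchanged by the outer diagonal automorphism, so that automorphism does not normalize the image of either embedding in $\PGL_{(q-1)/2}$.) A related symptom is your claim that $|\Out(G)|$ is bounded in terms of the Lie type, $\ell$, and the exponent of $p$ alone: the diagonal part has order equal to that of the center of the Schur cover, which grows like the rank (hence like $d$), not like a constant.

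This is precisely why the paper's bound is $C\ell d^3$ and not $C\ell d^2$: the paper only realizes $G\rtimes C_\ell$ (inner plus field automorphisms) linearly in dimension $\ell d^2$ over $\mathbb F_p$, bounds the index of this subgroup in $\Aut(G)$ by $3Cd$ (using $|D|\leq Cd$ for the diagonal quotient and order at most $3$ for the graph part), and then takes the \emph{induced representation}, whose kernel is the normal core of the kernel of the original faithful representation and which is therefore faithful. Induction costs a factor equal to the index, i.e.\ an extra factor of $d$, and completely sidesteps the question of whether the diagonal and graph automorphisms are geometrically realized on the given module. Your argument, if it worked, would prove the stronger bound $\Aut(G)\leq\GL_{C\ell d^2}(\mathbb F_p)$, which should itself have been a warning sign. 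To repair your proof, replace the ``$M_d\oplus M_d^*$ plus normalizing elements'' step for the diagonal and graph automorphisms with an induced representation from $G\rtimes C_\ell$ (or equivalently, sum the module over a transversal of $G\rtimes C_\ell$ in $\Aut(G)$), and supply the bound $|D|\leq Cd$ from the tables of Schur multipliers.
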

\begin{proof}
From
     \cite[Theorem 30 and 36]{steinberg}, we have that every automorphism of $G$ is the
     composition of an inner automorphism, a diagonal automorphism, a graph automorphism (i.e.~induced by an automorphism of the Dynkin diagram), and a field automorphism. Since $G \cong \Inn(G)$, we have that $\Out(G)$ is generated by diagonal, graph, and field automorphisms.
     Here, diagonal automorphisms are the automorphisms induced by conjugation inside a suitable adjoint algebraic group; modulo inner automorphisms, they form the diagonal automorphism group.
     From \cite[Exercise pg. 96]{steinberg}, we have that if $D$ is the group of diagonal automorphisms modulo those that are inner, then $D$ is isomorphic to the center of the Schur cover of $G$. Examining
     tables 24.2 and 24.3 in~\cite{malle_testerman} and comparing them
     to the bounds on the values found in Theorem A.2~\cite{bradford_thom_shortlaws} or Proposition 5.4.13 of~\cite{KL}, there exists a constant $C>0$ such that $|D| \leq C \cdot d.$
     
     The automorphisms of $G$ induced by field automorphisms form a cyclic group generated by the Frobenius map
     \[F_p \colon \textbf{G}(p^\ell)
     \longrightarrow \textbf{G}(p^\ell),\] where $\ell$ is the order of the standard Frobenius automorphism $F_p$ on $G$. Graph automorphisms are induced by automorphisms of the Dynkin diagram, and their outer classes have order either $2$ or $3$.
     
     Let $C_\ell$ be the cyclic group of order $\ell$ with generator $y$. If $C_\ell$ acts on $G$ via $x \cdot a = F_p(a)$, then the previous
     remarks show that
     $G \rtimes C_\ell$ has index at most  $3d$ in $\Aut(G)$, where here $G$ is identified with its group of inner
     automorphisms. Thus, if $m$ is a bound for the minimal dimension
     of a representation of $G \rtimes C_\ell$ over a given field $\mathbb K$, then from the induced representation,
     we obtain \[\Aut(G) \leq \GL_{3Cdm}(\mathbb K).\]
     
     Therefore, we may restrict our attention to representations
     of the group
     $G \rtimes C_\ell$. We may view $G \leq \GL_{w(G)}(p^\ell)$, where  $w(G)=d^2$ is the square of the values found in Theorem A.2~\cite{bradford_thom_shortlaws} or Proposition 5.4.13 of~\cite{KL}. The Frobenius map is not linear over $\mathbb F_{p^{\ell}}$, but
     $\mathbb F_{p^{\ell}}$ is an $\ell$--dimensional vector space over
     $\mathbb F_p$. After identifying $\mathbb F_{p^{\ell}}^{w(G)}$ with
     $\mathbb F_p^{\ell\cdot w(G)}$, the Frobenius map induces an
     $\mathbb F_p$--linear transformation. We let $C_\ell$ act by this
     transformation, applying Frobenius to each coordinate in
     $\mathbb F_{p^{\ell}}^{w(G)}$.

We claim that $G \rtimes C_\ell$ 
admits a faithful representation over $\mathbb{F}_{p}$ via
\[(g,x^t)\cdot v = g\cdot x^t(v),\] where $0\leq t<\ell$.
It is easy to see that each of the above maps is linear. We need to show that we have obtained a homomorphism. Note that
\begin{eqnarray*}
    (g_1,x^{t_1}) \cdot ((g_2, x^{t_2}) \cdot v) &=& (g_1, x^{t_1})(g_2 \circ x^{t_2})(v)\\
    &=&g_1 \circ x^{t_1} \circ g_2 \circ x^{t_2}(v)\\
    &=&g_1 \circ x^{t_1} \circ g_2 \circ x^{-t_1} \circ x^{t_1 + t_2}(v)\\
    &=&g_1 \circ F_p^{t_1}(g_2) \circ x^{t_1 + t_2}(v)\\
    &=&(g_1 F_p^{t_1}(g_2), x^{t_1 + t_2})(v)\\
    &=&((g_1, x^{t_1}) \cdot (g_2, x^{t_2}))(v).
\end{eqnarray*}

We thus have an action of $G \rtimes C_\ell$ on $\mathbb{F}_{p}^{\ell\cdot w(G)}$. If this action were not faithful, then, since the restriction to $G$ is faithful, there would be some element $(g,x^t)$ in the kernel with $x^t\neq 1$. By conjugating suitably, we see that $(g',x^t)$ also lies in the
kernel for some $g'\neq g$, whence $(g^{-1}g',\mathrm{id})$ lies in the kernel.
Since the restriction of the action to $G$ is faithful, this is a contradiction.
We have thus found a faithful representation \[\varphi \colon G \rtimes C_\ell \longrightarrow \GL_{\ell\cdot w(G)}(p),\] as desired.
\end{proof}

Let $G$ be a finite simple group of Lie type with defining field
$\mathbb{F}_{p^\ell}$, and let $m\in\N$. We have the following corollary, which bounds the minimal dimension of a representation over $\mathbb{F}_{p}$ of $\Aut(G^m)$ from above in terms of the minimal 
dimensional $\mathbb{F}_{p^\ell}$--representation of $G$ and the integer $m$. 
\begin{cor}\label{cor: rep dimension of Aut of product}
    Let $G$ be a finite simple group of Lie type with defining field $\mathbb{F}_{p^\ell}$, and let $d$ be the minimal degree of a projective representation of $G$ over
    $\mathbb{F}_{p^\ell}$. There exists a universal constant $C>0$ such that $\Aut(G^m) \leq \GL_{C(m !) m\ell d^3}(p)$ for all $m \in \mathbb{N}$.
    \end{cor}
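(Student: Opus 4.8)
The plan is to reduce the statement to Lemma~\ref{lemma: rep dimension of Aut} by combining the classical description of the automorphism group of a direct power of a nonabelian finite simple group with an induced representation argument in the same spirit as the proof of that lemma. First I would record the structural fact that, since $G$ is a finite simple group of Lie type, it is in particular a nonabelian finite simple group, so the $m$ coordinate factors $G_i = 1\times\cdots\times G\times\cdots\times 1$ are exactly the minimal normal subgroups of $G^m$. Any automorphism permutes this set, and an automorphism fixing each $G_i$ setwise is determined by, and can be chosen arbitrarily as, a tuple of automorphisms of the factors; the coordinate permutations split the resulting map. Thus one obtains a split short exact sequence
\[
1 \longrightarrow \Aut(G)^m \longrightarrow \Aut(G^m) \longrightarrow \Sym(m) \longrightarrow 1,
\]
that is, $\Aut(G^m)\cong\Aut(G)\wr\Sym(m)$, so $\Aut(G)^m$ is a subgroup of $\Aut(G^m)$ of index $m!$.

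Next I would apply Lemma~\ref{lemma: rep dimension of Aut}. Since $d$ is the minimal degree of a projective representation of $G$ over $\mathbb{F}_{p^\ell}$, we have $G\leq\PGL_d(\mathbb{F}_{p^\ell})$, so there is a universal constant $C_0>0$, independent of $G$, $\ell$, $d$ and $m$, with $\Aut(G)\leq\GL_{C_0\ell d^3}(\mathbb{F}_p)$. Taking the $m$-fold direct sum of this faithful representation gives a faithful representation $\Aut(G)^m\leq\GL_{C_0 m\ell d^3}(\mathbb{F}_p)$. Inducing it up to the index-$m!$ overgroup $\Aut(G^m)$ produces a representation of dimension $m!\cdot C_0 m\ell d^3$; its kernel is the intersection over $g\in\Aut(G^m)$ of the conjugates $g(\ker)g^{-1}$ of the kernel of the representation being induced, hence trivial, so the induced representation is faithful. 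Setting $C=C_0$ yields $\Aut(G^m)\leq\GL_{C(m!)m\ell d^3}(\mathbb{F}_p)$.

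The only delicate points are bookkeeping ones. First, the wreath product identification must be invoked using the genuine simplicity (and nonabelianness) of $G$, which is automatic since $G$ is by definition a finite simple group of Lie type; this is a classical fact about automorphism groups of direct powers of nonabelian simple groups. Second, one must make sure the constant supplied by Lemma~\ref{lemma: rep dimension of Aut} is genuinely universal, i.e.~independent of all of $G$, $\ell$, $d$, $m$; this is clear from the proof of that lemma, where the constant is extracted from the uniform bounds of Theorem~A.2 of~\cite{bradford_thom_shortlaws} (or Proposition~5.4.13 of~\cite{KL}) together with the tables in~\cite{malle_testerman}. Everything else — the direct sum construction and the faithfulness of an induced faithful representation — is routine. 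I would also remark that one can in fact do better, obtaining dimension $O(m\ell d^3)$ via the natural representation of $\Aut(G)\wr\Sym(m)$ on $m$ copies of the $\Aut(G)$-module with $\Sym(m)$ permuting the summands, but the stated bound suffices for later use and follows more transparently from the induction.
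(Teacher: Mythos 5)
Your proposal is correct and follows essentially the same route as the paper: identify $\Aut(G^m)\cong\Aut(G)\wr\Sym(m)$ (the paper justifies the factor-preservation via a conjugacy-class argument rather than minimal normal subgroups, but both are standard), apply Lemma~\ref{lemma: rep dimension of Aut} to each factor, take the $m$-fold direct sum, and induce up the index-$m!$ extension. Your closing remark that the natural permutation action of the wreath product would give the sharper bound $O(m\ell d^3)$ is also accurate, though, as you note, unnecessary for the paper's purposes.
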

    \begin{proof}
        Since $G$ is a finite simple group, we have that  \[\Aut(G^m) = \Aut(G)^m \rtimes \Sym(m),\] where the symmetric group $\Sym(m)$ acts on $\Aut(G)^m$ by permutation of coordinates. Indeed, every
        automorphism of $G^m$ must preserve the direct factors of $G^m$:
        suppose $g\in G^m$ is given by $(x,1,\ldots,1)$, where only the
        first coordinate is nontrivial, and this element is sent
        by an automorphism to an element $h$
        which has at least two nontrivial coordinates. Observe
        that the conjugacy class of $x$ in $G^m$ only generates one copy
        of $G$, whereas the conjugacy class of $h$ will generate a copy
        of $G$ in at least
        two coordinates.
        
        Lemma \ref{lemma: rep dimension of Aut} implies that \[\Aut(G) \leq \GL_{C\ell d^3}(p)\] for a universal constant $C>0$. Therefore, \[(\Aut(G))^m \leq \GL_{Cm\ell d^3}(p).\] Since $|\Sym(m)| = m!$, we have an induced representation $$
        (\Aut(G))^m \rtimes \Sym(m) \leq  \GL_{C (m !) m\ell d^3}(p)
        $$ as desired.
    \end{proof}

For each prime $p \in \mathbb{N}$, we let  $r_p(G)$ be the minimal positive integer $\ell$ for which there is a natural number $t \in \mathbb{N}$ and an injective homomorphism
\[\varphi \colon G \longrightarrow \PGL_{\ell}(p^t).\] We define
\[r(G) = \min_{p \text{ prime }}r_p(G),\] and define $r_p^L(G)$ and $r^L(G)$
in the same fashion, substituting $\GL_{\ell}$ for the role of $\PGL_{\ell}$.
When $G$ is simple, we clearly have $r(G) \leq r^L(G)$. Additionally, since
\[\PGL_{\ell}(K) \leq \GL_{\ell^2}(K)\] for an arbitrary field $K$, we have
$r^L(G) \leq (r(G))^2$ for any group. We say a non-empty collection of finite groups $\mathcal{F}$ has \emph{bounded rank} if there exists a constant $R>0$ such that $r^L(G) \leq R$ for all $G \in \mathcal{F}$, and has \emph{bounded projective rank} if $r(G) \leq R$ for all $G \in \mathcal{F}$. 

By comparing the minimal dimensional faithful representation
of a finite simple group of Lie type over its defining field with
Theorem 5.3.9 in~\cite{KL}, we see:
\begin{prop}\label{prop:bounded-bounded}
    Let $\{G_i\}_{i\in\N}$ be a family of finite simple groups of Lie
    type, with $p_i$ the characteristic of the defining field of $G_i$.
    Then the set of natural numbers $\{r(G_i)\}_{i\in\N}$ is bounded
    if and only if the set $\{r_{p_i}(G_i)\}_{i\in\N}$ is bounded.
\end{prop}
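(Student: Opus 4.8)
The plan is to prove the two implications separately; only the forward one, that boundedness of $\{r(G_i)\}$ implies boundedness of $\{r_{p_i}(G_i)\}$, requires any real input. For the reverse implication it suffices to observe that, writing $p_i=\dc(G_i)$, the definition of $r$ as a minimum over all primes gives $r(G)=\min_{p}r_p(G)\le r_{p_i}(G_i)$ for every $i$, so nothing further is needed.

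For the forward implication, suppose $r(G_i)\le R$ for all $i$. Fix an index, write $G=G_i$ and $p=p_i=\dc(G)$, and choose a prime $\ell$, an integer $t$, and a faithful embedding $G\hookrightarrow\PGL_d(\ell^t)$ realizing $d=r(G)\le R$. If $\ell=p$, then $r_p(G)=r(G)\le R$ and this index is already controlled. If $\ell\ne p$, then $G$ carries a faithful \emph{cross-characteristic} projective representation of dimension at most $R$, and here I would invoke the Landazuri--Seitz--Zalesskii lower bounds in the form recorded in \cite[Theorem 5.3.9]{KL}: the dimension of any such representation is bounded below by a quantity that tends to infinity both as the Lie rank of the associated algebraic group $\G$ grows (uniformly in the size $q$ of the defining field) and as $q$ grows (uniformly in the rank). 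Since $d\le R$, this confines $G$ to a \emph{finite} set $\mathcal{S}=\mathcal{S}(R)$ of finite simple groups of Lie type, namely those of bounded Lie rank over bounded fields. Combining the two cases, $r_{p_i}(G_i)\le\max\bigl(R,\ \max_{H\in\mathcal{S}}r_{\dc(H)}(H)\bigr)$ for every $i$, and the right-hand side is finite; this is the desired uniform bound.

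The step I expect to be the main obstacle is the bookkeeping attached to \cite[Theorem 5.3.9]{KL}. I would need to extract from those tables a genuinely uniform form of the cross-characteristic lower bound --- one whose value grows with the Lie rank even when $q$ is as small as the type permits, and grows with $q$ even when the rank is as small as the type permits --- so that ``bounded cross-characteristic dimension'' really forces both the rank and $q$ to be bounded and hence $G$ to lie in a finite set. I would also absorb into $\mathcal{S}$ the handful of small groups of Lie type admitting exceptional isomorphisms, for which ``the defining characteristic'' is not even well defined, together with the low-rank groups over small fields that appear as exceptions in the tables of \cite{KL}. Since all of these together form a finite set, the finitely many values $r_{\dc(H)}(H)$ that they contribute do not affect boundedness, and the argument above then goes through.
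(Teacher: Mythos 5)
Your proof is correct and follows essentially the same route as the paper, whose entire argument is the one-line remark that the proposition follows "by comparing the minimal dimensional faithful representation of a finite simple group of Lie type over its defining field with Theorem 5.3.9 in [KL]" --- i.e., exactly your dichotomy between defining-characteristic and cross-characteristic realizations of $r(G_i)$, with the Landazuri--Seitz--Zalesskii bounds confining the cross-characteristic case to a finite list. Your write-up simply supplies the bookkeeping the paper omits.
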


In particular, Proposition~\ref{prop:bounded-bounded} allows one to
assume, up to a bounded error, that
minimal dimensional faithful representations of finite simple groups
of Lie type occur over the defining field.

The following lemma is inspired by \cite[Lemma 2.2]{bou_rabee-McReynolds2016}; here and throughout this paper, logarithms will be assumed
to be base two unless otherwise noted. Recall also that $m_1(H)=\max_{h\in H}|h|$ denotes the maximal order of an element of a finite group $H$.
\begin{lemma}\label{lem:rank_inequality1}
    Let $\{H_i^{\ell_i}\}_{i\in\N}$ be a set of finite products of $e$--extension-bounded nonabelian finite simple groups of Lie type. Then $\{r(\Aut(H_i^{\ell_i}))\}_{i\in\N}$ is bounded if and only if the sequences 
    $\left\{ \ell_i\right\}_{i \in \mathbb{N}}$ and
    \[\left\{\frac{\log |H_i^{\ell_i}|}{\log(m_1(H_i^{\ell_i}))}\right\}_{i\in\N}\] are both bounded.
\end{lemma}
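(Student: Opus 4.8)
The plan is to factor the equivalence through the intermediate statement that $\{r(\Aut(H_i^{\ell_i}))\}_{i\in\N}$ is bounded if and only if both $\{\ell_i\}_{i\in\N}$ and $\{r(H_i)\}_{i\in\N}$ are bounded, and then to compare $r(H_i)$ with the displayed ratio via the structure theory of finite simple groups of Lie type. For the ``if'' part of the intermediate statement, write $\mathbb{F}_{p_i^{f_i}}$ for the defining field of $H_i$ and $d_i$ for the minimal degree of a projective representation of $H_i$ over it; Corollary~\ref{cor: rep dimension of Aut of product} gives $\Aut(H_i^{\ell_i})\leq\GL_{C(\ell_i!)\,\ell_i f_i d_i^{3}}(p_i)$. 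Here $f_i\leq e$ by $e$-extension-boundedness --- this is the only place that hypothesis is used --- and if $\{\ell_i\}$ and $\{r(H_i)\}$ are bounded, then so are $\ell_i!$, $\ell_i$, and $d_i$ (boundedness of $r(H_i)$ forces the Lie rank of $H_i$ to be bounded, by Proposition~\ref{prop:bounded-bounded} together with the low-dimensional representation theory of the classical groups \cite[Theorem 5.3.9]{KL}, and then $d_i$ is controlled by the natural module for classical types and directly for the finitely many exceptional types); hence $r(\Aut(H_i^{\ell_i}))$ is bounded.

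For the ``only if'' part, $H_i^{\ell_i}$ is center-free, so monotonicity of $r$ under passing to subgroups gives $r(\Aut(H_i^{\ell_i}))\geq r(H_i^{\ell_i})\geq r(H_i)$, using that $H_i$ is a direct factor of $H_i^{\ell_i}\cong\Inn(H_i^{\ell_i})\leq\Aut(H_i^{\ell_i})$. It remains to see $r(H_i^{\ell_i})\geq\sqrt{\ell_i}$, for which I would combine $r^L(G)\leq r(G)^2$ with the claim that any faithful linear representation $V$, over any field, of a direct product $H^{\ell}$ of $\ell$ copies of a nonabelian finite simple group has $\dim V\geq\ell$. For the claim: each of the $\ell$ factors acts faithfully on $V$, hence nontrivially on some composition factor of $V$ --- the elements acting trivially on the associated graded module form a normal $p$-subgroup, which cannot contain a copy of the simple non-$p$-group $H$; over the algebraic closure such a composition factor is an outer tensor product of irreducibles of the factors, in which a factor acting nontrivially contributes a tensor factor of dimension at least $2$; assigning to each index one composition factor it activates and summing dimensions over the fibers of this assignment yields $\dim V\geq\ell$. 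Consequently $r(\Aut(H_i^{\ell_i}))$ bounded forces both $\{\ell_i\}$ and $\{r(H_i)\}$ bounded, completing the intermediate equivalence.

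Finally I would compare $r(H_i)$ with the ratio $\log|H_i^{\ell_i}|/\log m_1(H_i^{\ell_i})$, now assuming $\{\ell_i\}$ bounded (it appears in both conditions). Since $\log|H_i^{\ell_i}|=\ell_i\log|H_i|$ and $m_1(H_i)\leq m_1(H_i^{\ell_i})\leq m_1(H_i)^{\ell_i}$, boundedness of the ratio for $H_i^{\ell_i}$ is equivalent to boundedness of $\log|H_i|/\log m_1(H_i)$, and I claim this holds iff the Lie rank $r_i$ of $H_i$ (equivalently $r(H_i)$) is bounded. If $\{r_i\}$ is bounded, then $\dim\mathbf{G}_i$ is bounded, so $\log|H_i|=O(\log q_i)$, while $H_i$ contains an element of order at least $q_i-1$ (e.g.\ in a split maximal torus); discarding the finitely many $H_i$ with $q_i$ small, $\log m_1(H_i)$ is then comparable to $\log q_i$ and the ratio is $O(1)$. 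If $\{r_i\}$ is unbounded, pass to a subsequence with $r_i\to\infty$; then $H_i$ is of classical type, so $\log|H_i|\gg r_i^{2}\log q_i$, whereas the Jordan decomposition together with the classification of maximal tori bounds $m_1(H_i)$ by a product of a unipotent part of order $p_i^{O(\log r_i)}$ and a semisimple part of order at most $(q_i+1)^{r_i}$, so $\log m_1(H_i)=O(r_i\log q_i)$ and the ratio is $\gg r_i\to\infty$. Chaining the equivalences proves the lemma.

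I expect the main obstacle to be this last comparison, and within it the upper bound $\log m_1(H_i)=O(r_i\log q_i)$: the matching lower bound $m_1(H_i)\geq q_i-1$ is a routine split-torus estimate, but the upper bound is what forces the ratio to grow with the rank, and it depends on the classification of maximal tori in finite groups of Lie type together with bounds on the orders of unipotent elements --- precisely the point where the Lie-theoretic structure of the $H_i$, rather than mere finite simplicity, is essential.
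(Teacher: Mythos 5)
Your proof is correct, and its top-level skeleton matches the paper's: bound $\ell_i$ first, reduce to the single factor $H_i$, relate $r(H_i)$ to $\log|H_i|/\log m_1(H_i)$, and invoke Corollary~\ref{cor: rep dimension of Aut of product} (with $e$--extension-boundedness entering only there) for the converse. You diverge in two sub-arguments. First, to bound $\ell_i$ the paper takes a fixed cyclic subgroup $C\leq H_i$ of order prime to $p$, diagonalizes $C^{\ell_i}$ over $\overline{\mathbb F}_p$, and uses cyclicity of $\mathbb F^\times$ to get $r(H_i^{\ell_i})\geq\ell_i$ directly; your composition-factor/outer-tensor-product argument is also valid (and only needs $H_i$ nonabelian simple, not even an abelian subgroup of unbounded order), at the cost of the weaker bound $r\geq\sqrt{\ell_i}$ via $r^L\leq r^2$, which is immaterial here. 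Second, the paper treats the equivalence between boundedness of $r(H_i)$ and of $\log|H_i|/\log m_1(H_i)$ as a black box, citing \cite[Lemma 2.2]{bou_rabee-McReynolds2016}, whereas you re-derive it from the structure theory (orders of classical groups, Singer-type tori, orders of unipotent elements). Your derivation is right and correctly identifies where the Lie-type hypothesis is essential, but it is exactly the content of the cited lemma, so in the paper's economy it is redundant; if you keep it, the one step to write out carefully is the upper bound $\log m_1(H_i)=O(r_i\log q_i)$ in unbounded rank, for which you should quarantine the bounded-rank-and-bounded-$q$ groups (finitely many) before asserting $\log m_1\asymp\log q_i$ in the bounded-rank case.
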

\begin{proof}
    Suppose the sequence $\{r(\Aut(H_i^{\ell_i}))\}_{i\in\N}$ is bounded.
    We then have that the sequence $\{r(H_i^{\ell_i})\}_{i\in\N}$ is also bounded, since
    \[
    H_i^{\ell_i} \leq \Aut(H_i^{\ell_i}).
    \]
    
    Since the sequence $\{r(H_i^{\ell_i})\}_{i\in\N}$ is bounded, we have that $\{\ell_i\}_{i\in\N}$ is bounded by some integer $\ell$. To see this, suppose otherwise for a contradiction. We then have the collection $\{H_i^{\ell_i}\}_{i\in\N}$ contains subgroups of the form $C^{\ell_i}$, where
    $C$ is a fixed nontrivial cyclic group and $\ell_i$ can achieve arbitrarily large
    values. We may assume that $C$ is not divisible by $p$ since the
    ambient groups are not nilpotent (or, by appealing to Feit--Thompson's Odd Order
    Theorem). Passing to the
    algebraic closure of the defining field, we see the action of $C^{\ell}$ is diagonalizable.
    Since the multiplicative group of a finite field is cyclic, it follows that
    $r(H_i^{\ell_i})\geq\ell_i$ for all $i$, which is a contradiction. Additionally, it follows that the sequence $\{r(H_i)\}_{i\in\N}$ is bounded, since $H_i \leq \Aut(H_i^{\ell_i})$. Because \[\log(m_1(H_i)) \leq \log(m_1(H_i^{\ell_i})),\]
    it follows from \cite[Lemma 2.2]{bou_rabee-McReynolds2016} that
    $$
\frac{\log |H_i^{\ell_i}|}{\log(m_1(H_i^{\ell_i}))} \leq \frac{\log |H_i^{\ell_i}|}{\log(m_1(H_i))} \leq \ell \frac{\log |H_i|}{\log(m_1(H_i))} \leq K
    $$
    for some constant $K > 0$.
    
    Conversely, suppose that both of
    the sequences $\left\{ \ell_i\right\}_{i\in\N}$ and
    \[\left\{\frac{\log |H_i^{\ell_i}|}{\log(m_1(H_i^{\ell_i}))}\right\}_{i\in\N}\] are bounded by $R>0$. Since $\ell_i\leq R$, we have
    \[
    \log |H_i^{\ell_i}| = \ell_i\log |H_i| \leq R\log |H_i|.
    \]
    For all elements in $H_i^{\ell_i}$, the following inequality holds:
    \[
    |(x_1,\ldots, x_{\ell_i})| =\lcm\{|x_1|,\ldots, |x_{\ell_i}|\} \leq \prod_{t=1}^{\ell_i}|x_t| \leq (m_1(H_i))^{\ell_i}.
    \]
    Therefore,
    \[
    m_1(H_i^{\ell_i}) \leq (m_1(H_i))^{\ell_i} \leq (m_1(H_i))^R.
    \]
    Combining this with the assumed bound
    \[
    \frac{\ell_i\log |H_i|}{\log(m_1(H_i^{\ell_i}))}\leq R
    \]
    gives
    \[
    \frac{\log |H_i|}{\log(m_1(H_i))} \leq R^2.
    \]
    From \cite[Lemma 2.2]{bou_rabee-McReynolds2016}, we see that $\{r(H_i)\}_{i \in \mathbb{N}}$ is bounded. Since $\ell_i \leq R$ for all $i$ and the family $\{H_i\}_{i\in\N}$ is extension-bounded,
    Corollary \ref{cor: rep dimension of Aut of product} implies $\{r(\Aut(H_i^{\ell_i}))\}_{i \in \mathbb{N}}$ is bounded.
\end{proof}

The following is well known; see \cite{Hall95} for instance.
\begin{lemma}\label{lem:ultraproduct representation}
    If $\mathcal{F} = \{G_i\}_{i \in \mathbb{N}}$ is a set of finite groups 
    such that either the rank or the projective rank of
    elements in $\mathcal F$
    is bounded by some $R \in \mathbb{N}$, then for any non-principal ultrafilter $\omega$ on
    $\N$ there is an injective homomorphism \[\varphi_\omega \colon G_\omega \longrightarrow \GL_\ell(\mathbb{K})\] for some
    $\ell \in \mathbb{N}$ and some field $\mathbb{K}$.
\end{lemma}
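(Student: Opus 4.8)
The plan is to realize $\varphi_\omega$ as the ultraproduct of a uniformly bounded family of faithful \emph{linear} representations of the $G_i$, and then to observe that such an ultraproduct is automatically a representation over the ultraproduct field. First I would reduce to the linear case. If the projective rank is bounded by $R$, then for each $i$ there is a prime $p_i$, a prime power $q_i = p_i^{t_i}$, and an injection $G_i \hookrightarrow \PGL_{d_i}(\mathbb{F}_{q_i})$ with $d_i \le R$; composing with the embedding $\PGL_{d_i}(\mathbb{F}_{q_i}) \hookrightarrow \GL_{d_i^2}(\mathbb{F}_{q_i})$ recorded above yields an injection $\psi_i \colon G_i \hookrightarrow \GL_{e_i}(\mathbb{F}_{q_i})$ with $e_i \le R^2$. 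If instead the rank itself is bounded by $R$, we already have such $\psi_i$ with $e_i \le R$. In either case the $e_i$ take only finitely many values, so since $\N$ is partitioned into the finitely many sets $\{i : e_i = e\}$, the defining properties of the non‑principal ultrafilter $\omega$ force exactly one of them, say $S := \{i : e_i = \ell\}$, to lie in $\omega$.

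Next I would assemble these into a single representation over a field. Since $S \in \omega$, and since two families of sets (resp.\ groups) that agree on a member of $\omega$ have canonically isomorphic (resp.\ canonically isomorphic) ultraproducts, we may replace $G_i$ by the trivial group and $\psi_i$ by the trivial homomorphism $\{1\} \hookrightarrow \GL_\ell(\mathbb{F}_{q_i})$ for $i \notin S$ without affecting $G_\omega$ or any of the ultraproducts under consideration. Thus we may assume $\psi_i \colon G_i \hookrightarrow \GL_\ell(\mathbb{F}_{q_i})$ for \emph{every} $i$, with a fixed $\ell \le R^2$. Set $\mathbb{K} := \prod_\omega \mathbb{F}_{q_i}$, which is a field because an ultraproduct of fields is a field. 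The coordinatewise maps give a group homomorphism $\prod_i G_i \to \prod_i \GL_\ell(\mathbb{F}_{q_i})$ that is compatible with $\sim_\omega$ and hence descends to a homomorphism of ultraproducts; moreover there is a canonical identification $\prod_\omega \GL_\ell(\mathbb{F}_{q_i}) \cong \GL_\ell(\mathbb{K})$, since a tuple of $\ell\times\ell$ matrices over the $\mathbb{F}_{q_i}$ is invertible for $\omega$‑almost all $i$ precisely when the corresponding matrix over $\mathbb{K}$ has nonzero determinant. Composing, we obtain a homomorphism $\varphi_\omega \colon G_\omega \longrightarrow \GL_\ell(\mathbb{K})$ given by $(g_i)_\omega \mapsto (\psi_i(g_i))_\omega$.

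Finally I would check injectivity. Suppose $\varphi_\omega\big((g_i)_\omega\big) = 1$; then $\{i : \psi_i(g_i) = 1\} \in \omega$. Because each $\psi_i$ is injective, this set coincides with $\{i : g_i = 1_{G_i}\}$, which therefore lies in $\omega$, and hence $(g_i)_\omega = 1$ in $G_\omega$. So $\varphi_\omega$ is an injective homomorphism into $\GL_\ell(\mathbb{K})$ with $\ell \le R^2$, as desired.

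I do not anticipate a serious obstacle: the statement is essentially bookkeeping around the definition of the ultraproduct. The only points requiring a little care are harmonizing the representation degrees to a single $\ell$ by appealing to the exhaustiveness of $\omega$, and recognizing $\prod_\omega \GL_\ell(\mathbb{F}_{q_i})$ as $\GL_\ell(\mathbb{K})$; both follow directly from the definitions (the latter being the coordinate description of matrices together with the fact that inversion and multiplication in $\GL_\ell$ are given by polynomial formulas).
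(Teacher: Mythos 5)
Your argument is correct and is exactly the standard one: the paper does not prove this lemma at all (it is stated as ``well known'' with a citation), and your proof --- passing from $\PGL_{d}$ to $\GL_{d^2}$ to uniformize to the linear case, using the ultrafilter to fix a single degree $\ell$ from the finitely many possible values of $e_i$, identifying $\prod_\omega \GL_\ell(\mathbb F_{q_i})$ with $\GL_\ell(\mathbb K)$ for $\mathbb K=\prod_\omega\mathbb F_{q_i}$, and checking injectivity coordinatewise --- is precisely the bookkeeping the citation is meant to supply. No gaps.
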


\section{Preliminaries on geometric group theory and linear groups}\label{s:prelim-2}

\subsection{Malabelian groups}\label{ss:malabelian}
Recall that a group $G$ is \emph{malabelian} if for any pair of nontrivial elements $g,h \in G$ (possibly with $g=h$), there exists an element $k \in G$ such that $[g,khk^{-1}] \neq 1$. In other words, a group $G$ is malabelian if every nontrivial conjugacy class in $G$ has a trivial centralizer. 

Recall that a finitely generated group $G$ is $\kappa$-\emph{malabelian} with respect to a finite generating set $X$ if for every pair of nontrivial elements $a,b \in G$, there exists an element $k \in G$ with $\|k\|_X \leq \kappa$ such that $[kak^{-1},b] \neq 1.$ If $G$ is $\kappa$-malabelian with respect to a finite generating set $X$ and $X'$ is some other finite generating set, then $G$ is $\kappa'$-malabelian with respect to $X'$ for some other $\kappa' \in \mathbb{N}$, since the
corresponding word metrics on $G$ are bi-Lipschitz to each other.
We may say that $G$ is
\emph{uniformly malabelian} if the constant $\kappa$ is not specified,
and that any $\kappa$ as above is a \emph{uniformly malabelian
constant} with respect to $X$. Since centralizers of nontrivial elements
in free groups and
closed surface groups are cyclic, we easily obtain:
\begin{prop}\label{prop:free-surface-malabelian}
    Finitely generated nonabelian free groups and surface groups are
    uniformly malabelian.
\end{prop}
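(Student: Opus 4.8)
The plan is to fix a finite generating set $X$ for $G$ — where $G$ is either a nonabelian free group or the fundamental group of a closed surface of genus at least two (the genus one case being $\mathbb{Z}^2$, which is not malabelian, and is thus excluded) — and to produce a single constant $\kappa$, depending only on $(G,X)$, that serves as a uniformly malabelian constant. Given nontrivial $g,h\in G$, if $[g,h]\neq 1$ we may take $k=1$, so assume $[g,h]=1$. Since centralizers of nontrivial elements of $G$ are infinite cyclic, $C_G(g)$ is the unique maximal cyclic subgroup containing $g$, and from $[g,h]=1$ one gets $C_G(g)=C_G(h)=:\langle z\rangle$. The first key step is the structural observation that the ``bad set''
\[
B \;=\; \{\,k\in G : [g,khk^{-1}]=1\,\} \;=\; \{\,k\in G : khk^{-1}\in C_G(g)\,\}
\]
is contained in the normalizer $N_G(\langle z\rangle)$: if $khk^{-1}\in\langle z\rangle$, then since $\langle z\rangle$ is maximal cyclic we have $k\langle z\rangle k^{-1}=k\,C_G(h)\,k^{-1}=C_G(khk^{-1})=\langle z\rangle$. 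Moreover $N_G(\langle z\rangle)$ is virtually cyclic: the conjugation action on $\langle z\rangle$ gives a map $N_G(\langle z\rangle)\to\Aut(\mathbb{Z})$ with kernel $C_G(z)=\langle z\rangle$, so $[N_G(\langle z\rangle):\langle z\rangle]\leq 2$.

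The second ingredient is a uniform counting estimate: there are constants $c,c'>0$ depending only on $(G,X)$ such that every infinite cyclic subgroup $\langle z\rangle\leq G$ satisfies $\bigl|\langle z\rangle\cap B_X(n)\bigr|\leq cn+c'$ for all $n$, where $B_X(n)=\{g:\|g\|_X\leq n\}$. For free groups this follows by passing to a free basis $Y$ and using that the $Y$-length of $z^j$ is bounded below by its cyclically reduced length $|j|\cdot|c|_Y\geq|j|$ (where $z=wcw^{-1}$ with $c$ cyclically reduced), whence $\|z^j\|_X\geq|j|/\Lambda$ for the Lipschitz constant $\Lambda$ comparing $X$ to $Y$. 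For closed surface groups it follows from Gromov hyperbolicity, since the stable translation length of an infinite order element in a hyperbolic group is bounded below by a positive constant depending only on the group — equivalently, by the systole for a cocompact Fuchsian group. Combining this with the previous paragraph, and translating the coset $B\subseteq k_0 N_G(\langle z\rangle)$ back into $N_G(\langle z\rangle)$, the quantity $\bigl|B\cap B_X(n)\bigr|$ is bounded above by an affine function of $n$ whose coefficients do not depend on $g$ or $h$.

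Finally, $G$ has exponential growth (it is nonabelian free, or a surface group of genus at least two), so $|B_X(n)|$ eventually dominates any affine function of $n$. Hence there is a $\kappa$, depending only on $(G,X)$, with $|B_X(\kappa)|$ strictly larger than the uniform affine bound on $\bigl|B\cap B_X(\kappa)\bigr|$; for this $\kappa$, the ball $B_X(\kappa)$ is not contained in $B$, so some $k\in G$ with $\|k\|_X\leq\kappa$ satisfies $[g,khk^{-1}]\neq 1$. This exhibits $\kappa$ as a uniformly malabelian constant, and invoking Proposition~\ref{prop:free-surface-malabelian}'s hypotheses on the class of $G$ completes the argument.

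The step I expect to be the main obstacle is securing the \emph{uniformity} of the counting estimate: verifying that $c,c'$, and ultimately $\kappa$, can be chosen independently of $g$ and $h$. This is precisely what the cyclic-centralizer structure together with the positive lower bound on translation lengths delivers (elementarily for free groups, via hyperbolicity for surface groups). The other substantive point is the correct identification of the bad set $B$ as a subset of the virtually cyclic group $N_G(C_G(g))$; once these are in hand, the remainder is bookkeeping with ball sizes.
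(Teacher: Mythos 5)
Your argument is correct, and it rests on exactly the fact the paper invokes (the paper offers no proof beyond the observation that centralizers of nontrivial elements are cyclic): commuting nontrivial $g,h$ share a maximal cyclic centralizer $\langle z\rangle$, and the bad set $B=\{k:[g,khk^{-1}]=1\}$ lands inside $N_G(\langle z\rangle)$, which is virtually cyclic. That said, once you have $B\subseteq N_G(\langle z\rangle)$ you can stop much sooner: $N_G(\langle z\rangle)$ is a \emph{proper} subgroup of $G$ (it is virtually cyclic while $G$ is not), and a proper subgroup cannot contain every element of a generating set $X$; hence some $x\in X$ satisfies $[g,xhx^{-1}]\neq 1$, and $\kappa=1$ is a uniformly malabelian constant for any finite generating set. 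This makes the entire second half of your argument --- the uniform linear bound on $|\langle z\rangle\cap B_X(n)|$ via cyclically reduced lengths and lower bounds on stable translation length, and the comparison with exponential growth --- unnecessary, and it also avoids the one place where you lean on genuinely nontrivial external input (uniform positivity of translation lengths in hyperbolic groups, resp.\ the systole bound). Your route does yield a quantitative count of how many $k$ in a ball can fail, which is more information than the statement requires, but for the proposition itself the containment $B\subseteq N_G(\langle z\rangle)\subsetneq G$ already finishes the proof.
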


More generally, nonelementary hyperbolic groups are uniformly malabelian,
though we will not require this fact.
Let $G$ be a finitely generated uniformly malabelian group, and let $\ell \in \mathbb{N}$. The following proposition gives an upper bound on the minimal length of a nontrivial element of the $\ell^{th}$ term of the derived series of $G$ in terms of $\ell$. The following lemma will be useful for bounding $\RF_{G, \mathcal{F}^A}(n)$, for various families $\mathcal F$
of products of finite simple groups of Lie type.
\begin{lemma}\label{lem:bound length of element in D^n(G)}
    Suppose that $G$ is a finitely generated uniformly malabelian group with a finite generating set $X$. Let $\kappa$ be a uniformly malabelian constant of $G$ with respect to $X$, and let
    $1\neq a\in G$ be arbitrary. Then for all $n \in \mathbb{N}$, there exists a word $w_{n,a} \in D^n(G)$ such that the following hold:
    \begin{enumerate}
    \item $\|w_{n,a}\|_X \leq 8^n\max\{\|a\|_X, \kappa\}$;
    \item If $\varphi \colon G \longrightarrow Q$ is an epimorphism such
    that $\varphi(w_{n,a}) \neq 1$, then $\varphi(a) \neq 1$;
    \item If $\varphi \colon G \longrightarrow Q$ is an epimorphism and $N$ is a normal subgroup of $Q$ such that $\varphi(a) \in N,$ then $\varphi(w_{n,a}) \in D^n(N).$
    \end{enumerate}
\end{lemma}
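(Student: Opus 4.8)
The plan is to construct the word $w_{n,a}$ recursively on $n$, using malabelianness at each stage to replace a pair of nontrivial elements by a nontrivial commutator that moreover retains the property of being detected in any quotient in which $a$ survives.

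\textbf{Base case.} For $n=0$, set $w_{0,a}=a$; all three conditions are trivial, with $\|w_{0,a}\|_X=\|a\|_X\leq 8^0\max\{\|a\|_X,\kappa\}$.

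\textbf{Inductive step.} Suppose $w_{n,a}\in D^n(G)$ has been constructed satisfying (1)--(3). I want to produce $w_{n+1,a}\in D^{n+1}(G)$. The naive idea — take a commutator of $w_{n,a}$ with a conjugate of itself — may fail condition (2): if $\varphi(w_{n,a})$ has nontrivial image but that image is central in $\varphi(G)$, every such commutator dies. This is exactly where uniform malabelianness is used. Apply the $\kappa$--malabelian property to the pair $(w_{n,a},w_{n,a})$: there is $k\in G$ with $\|k\|_X\leq\kappa$ such that $[k\,w_{n,a}\,k^{-1},w_{n,a}]\neq 1$ in $G$. Set
\[
w_{n+1,a}=[k\,w_{n,a}\,k^{-1},\;w_{n,a}].
\]
Since $w_{n,a}\in D^n(G)$ and $D^n(G)$ is normal in $G$, the conjugate $k\,w_{n,a}\,k^{-1}$ also lies in $D^n(G)$, so $w_{n+1,a}\in[D^n(G),D^n(G)]=D^{n+1}(G)$; this gives membership. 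For the length bound, $\|k\,w_{n,a}\,k^{-1}\|_X\leq 2\kappa+\|w_{n,a}\|_X$, and a commutator of two words of lengths $L_1,L_2$ has length at most $2(L_1+L_2)$, so
\[
\|w_{n+1,a}\|_X\leq 2\bigl((2\kappa+\|w_{n,a}\|_X)+\|w_{n,a}\|_X\bigr)=4\kappa+4\|w_{n,a}\|_X\leq 8\max\{\|w_{n,a}\|_X,\kappa\},
\]
and since by induction $\|w_{n,a}\|_X\leq 8^n\max\{\|a\|_X,\kappa\}$ and $\kappa\leq\max\{\|a\|_X,\kappa\}\leq 8^n\max\{\|a\|_X,\kappa\}$, we get $\|w_{n+1,a}\|_X\leq 8^{n+1}\max\{\|a\|_X,\kappa\}$, giving (1). (The constant $8$ rather than something smaller is to absorb both the conjugation and the commutator; I will not optimize it.)

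\textbf{Conditions (2) and (3).} For (2): if $\varphi\colon G\to Q$ is an epimorphism with $\varphi(w_{n+1,a})\neq 1$, then in particular $\varphi(w_{n,a})\neq 1$ — because $w_{n+1,a}$ is built as a word in conjugates of $w_{n,a}$, so $\varphi(w_{n,a})=1$ would force $\varphi(w_{n+1,a})=1$ — and then the inductive hypothesis (2) for $w_{n,a}$ gives $\varphi(a)\neq 1$. For (3): suppose $N\trianglelefteq Q$ and $\varphi(a)\in N$. By the inductive hypothesis (3), $\varphi(w_{n,a})\in D^n(N)$. Since $N$ is normal in $Q$, $\varphi(k\,w_{n,a}\,k^{-1})=\varphi(k)\varphi(w_{n,a})\varphi(k)^{-1}\in D^n(N)$ as well (the derived series terms of $N$ are characteristic in $N$, hence normal in $Q$). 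Therefore $\varphi(w_{n+1,a})=[\varphi(k\,w_{n,a}\,k^{-1}),\varphi(w_{n,a})]\in[D^n(N),D^n(N)]=D^{n+1}(N)$, which is (3). This closes the induction.

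\textbf{Main obstacle.} The only genuine subtlety is condition (2): one must guarantee that $w_{n+1,a}$ is nontrivial in $G$ itself (so that it is a legitimate "word" witnessing membership in $D^{n+1}(G)$) while simultaneously ensuring it cannot survive in a quotient that kills $w_{n,a}$. Malabelianness supplies nontriviality in $G$; the recursive "word in conjugates of $w_{n,a}$" structure supplies the quotient implication for free. Everything else is bookkeeping with the word metric and with the normality/characteristicity of derived-series terms. I should take a little care to state precisely why a commutator of words of length $L_1$ and $L_2$ has length $\le 2(L_1+L_2)$ (each of the four syllables is one of the two words or its inverse), but this is routine.
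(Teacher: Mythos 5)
Your proof is correct and follows essentially the same inductive construction as the paper's: at each stage you take the commutator of $w_{n,a}$ with a short conjugate of itself, using uniform malabelianness to guarantee nontriviality, and the length, quotient, and derived-series arguments match the paper's. The only cosmetic differences are that you start the induction at $n=0$ with $w_{0,a}=a$ (the paper starts at $n=1$) and you write the commutator with the conjugate in the first slot, neither of which affects anything.
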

\begin{proof}
    We proceed by induction on $n$. For the base case, there exists an element $k \in G$ with $\|k\|_X \leq \kappa$, such that $w_{1,a} = [a, kak^{-1}] \neq 1$. We see that
    $$
    \|w_{1,a}\|_X  \leq 2\|a\|_X + 2\|kak^{-1}\|_X \leq 4\|a\|_X + 4\|k\|_X \leq 8\max\{\|a\|_X,\kappa\}.
    $$
    Moreover, if $\varphi \colon G \longrightarrow Q$ is an
    epimorphism such that $\varphi(a) = 1$, then clearly $\varphi([a,kak^{-1}]) = 1$, as desired. Note that if $\varphi(a) \in N$ and $N$ is a normal subgroup of $Q$, then $\varphi(kak^{-1}) \in N$ as
    well, whence, $\varphi([a,kak^{-1}]) \in D^1(N).$

    For $n \geq 2$, by induction one obtains a nontrivial element $w_{n - 1,a} \in D^{n - 1}(G)$ such that 
    $$
    \|w_{n - 1,a}\|_X \leq 8^{n-1}\max\{\|a\|_X, \kappa\},
    $$ such that if $\varphi \colon G \longrightarrow Q$ is an epimorphism with $\varphi(w_{n-1,a}) \neq 1$ then $\varphi(a) \neq 1$, and such that
    if $\varphi\colon G\longrightarrow Q$ is an epimorphism 
    and $N$ is a normal subgroup of $Q$ where $\varphi(a)\in N$, then $\varphi(w_{n-1,a})\in D^{n-1}(N)$.
    
    Since $G$ is uniformly malabelian, there exists an element $k \in G$ with $\|k\|_X \leq \kappa$ such that 
    $$
    w_{n,a} = [w_{n-1,a},kw_{n-1,a}k^{-1}] \neq 1.
    $$ 
    Since $w_{n - 1,a} \in D^{n - 1}(G)$ and $D^{n-1}(G)$ is normal in $G$, we have $kw_{n - 1,a}k^{-1} \in D^{n - 1}(G).$ Therefore,
    $$
    w_{n,a} = [w_{n-1,a}, kw_{n - 1,a}k^{-1}] \in D^n(G).
    $$
    We observe that
    \begin{eqnarray*}
    \|w_{n,a}\|_X &\leq& 2\|w_{n -1,a}\|_X + 2\|kw_{n -1, a}k^{-1}\|_X\\
    &\leq& 4\|w_{n -1, a}\|_X + 4\kappa\\
    &\leq& 8\max\{\|w_{n-1,a}\|, \kappa\}\\
    &\leq& 8^{n}\max\{\|a\|_X,\kappa\}.
    \end{eqnarray*}
    
    Additionally, if $\varphi \colon G \longrightarrow Q$ is an epimorphism such that $\varphi(a) = 1$, we have
    $$
    \varphi(w_{n,a}) = \varphi([w_{n-1,a},kw_{n-1,a}k^{-1}]) = [\varphi(w_{n-1,a}), \varphi(kw_{n-1,a}k^{-1})] = 1.
    $$
    
    From the inductive hypothesis, if $\varphi(a) \in N$ for some normal subgroup of $Q$, then \[\varphi(w_{n-1,a}) \in D^{n-1}(N).\] Hence, $\varphi(kw_{n-1,a}k^{-1}) \in D^{n-1}(N)$ since $D^{n-1}(N)$ is normal in $N$. Therefore, 
    $$
    \varphi(w_{n,a}) = \varphi([w_{n-1,a}, kw_{n-1,a}k^{-1}]) = [\varphi(w_{n-1,a}), \varphi(kw_{n-1,a}k^{-1})] \in D^n(N),
    $$
    completing the proof of the lemma.
\end{proof}

Recall that if $G$ is a malabelian group and $A\leq \Out(G)$ is a
subgroup, then $\Gamma_{G,A}$ denotes the preimage of $A$ in
$\Aut(G)$.
For $N\leq G$ a subgroup and $A \leq \Out(G)$, we write $\mO_{N,A}$ for the orbit of $N$ under the conjugation action of $\Gamma_{G,A}$.
The \emph{$A$-invariant   } of $N$ is the intersection 
\[N_{A}=\bigcap_{M\in \mO_{N,A}} M.\] By construction, $N_{A}$ is a normal $\Gamma_{G,A}$-invariant subgroup in $G$. When $A = \Out(G)$, we will write $N_{\text{char}}$ and call $N$ the \emph{characteristic core} of $N$ in $G$.

\subsection{Linear groups}
In this section, we will gather some facts about finitely generated
groups of matrices, which will be useful in the sequel.
\begin{lemma}\label{lem:polynomial-localization}
    Let $G\leq\GL_\ell(\mathbb K)$ be a finitely generated subgroup. Then there exist:
    \begin{enumerate}
        \item A ring $\mathbb L\in\{\Z,\mathbb F_p\}$;
        \item A finite set of indeterminates $\{T_1,\ldots,T_s\}$;
        \item A finite set of nonzero polynomials
        $S\subseteq \mathbb L[T_1,\ldots,T_s]$;
        \item A faithful homomorphism \[G\longrightarrow
        \GL_{\ell'}\left(\mathbb L\left[\frac{1}{S}\right][T_1,\ldots,
        T_s]\right)\] for some $\ell'\in\N$.
    \end{enumerate}
\end{lemma}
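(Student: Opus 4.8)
The statement is essentially the classical fact that a finitely generated linear group over a field is linear over a finitely generated ring, together with bookkeeping about whether the prime ring is $\mathbb{Z}$ or $\mathbb{F}_p$. First I would fix a finite generating set $g_1,\ldots,g_m$ of $G$ and let $R_0\leq\mathbb K$ be the subring generated by $\mathrm{char}(\mathbb K)$ together with all matrix entries of $g_1,\ldots,g_m$ and of $g_1^{-1},\ldots,g_m^{-1}$. Then $G\leq\GL_\ell(R_0)$ and $R_0$ is a finitely generated commutative ring. Set $\mathbb L=\Z$ if $\mathrm{char}(\mathbb K)=0$ and $\mathbb L=\mathbb F_p$ if $\mathrm{char}(\mathbb K)=p$; in either case $\mathbb L$ is the prime ring of $\mathbb K$ and $R_0$ is a finitely generated $\mathbb L$-algebra, say $R_0=\mathbb L[t_1,\ldots,t_s]$ for finitely many generators $t_j\in\mathbb K$.

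Next I would produce the polynomial/localization description of $R_0$. Let $\pi\colon \mathbb L[T_1,\ldots,T_s]\longrightarrow R_0$ be the $\mathbb L$-algebra surjection with $T_j\mapsto t_j$, and let $\mathfrak p=\ker\pi$, a prime ideal since $R_0$ is a domain (it sits inside the field $\mathbb K$). The issue is that $R_0$ is a quotient of the polynomial ring, not a localization of it, so one cannot directly embed $\GL_\ell(R_0)$ into $\GL_\ell$ of a localized polynomial ring. The standard fix is to replace $R_0$ by a ring that simultaneously receives $R_0$ and embeds into a localized polynomial ring: choose a transcendence basis, relabel so that $t_1,\ldots,t_r$ are algebraically independent over $\mathbb L$ and $R_0$ is integral (in fact finite) over $\mathbb L[t_1,\ldots,t_r]$ after inverting finitely many elements. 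Concretely, by Noether normalization applied to $\mathrm{Frac}(R_0)$ over $\mathbb L$, the fraction field $\mathrm{Frac}(R_0)$ is a finite separable (or at least algebraic) extension of $\mathbb L(t_1,\ldots,t_r)$; by the primitive element theorem write $\mathrm{Frac}(R_0)=\mathbb L(t_1,\ldots,t_r)(\theta)$ for a single $\theta$ integral over $\mathbb L[t_1,\ldots,t_r][\tfrac1f]$ for some nonzero $f\in\mathbb L[t_1,\ldots,t_r]$. Then $R_0\subseteq \mathbb L[t_1,\ldots,t_r][\tfrac1f][\theta]$, and the latter ring is a finitely generated free module over $\mathbb L[t_1,\ldots,t_r][\tfrac1f]$ with basis $1,\theta,\ldots,\theta^{k-1}$.

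The final step is the embedding. Regular representation: a finitely generated free $A$-algebra $B$ of rank $k$ embeds in $\Mat_k(A)$ via $b\mapsto$ (multiplication by $b$), sending units of $B$ into $\GL_k(A)$. Applying this with $A=\mathbb L[T_1,\ldots,T_r][\tfrac1f]$ (after identifying $\mathbb L[t_1,\ldots,t_r]$ with a polynomial ring, which is legitimate since the $t_j$ are algebraically independent) and $B=A[\theta]$, we get $\GL_\ell(B)\hookrightarrow \GL_{k\ell}(A)$. Composing $G\hookrightarrow\GL_\ell(R_0)\hookrightarrow\GL_\ell(B)\hookrightarrow\GL_{k\ell}(A)$ and renaming $k\ell$ as the new $\ell$, taking $S=\{f\}$ (or whatever finite set of polynomials was inverted along the way), and relabeling $T_1,\ldots,T_r$ as $T_1,\ldots,T_s$ gives exactly the asserted faithful homomorphism. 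The main obstacle is purely the commutative-algebra massaging of the second paragraph — turning the quotient ring $R_0$ into something that injects into a localized polynomial ring — rather than anything about the group $G$; once that is done the regular-representation embedding is routine. One should double-check the positive-characteristic case, where separability of $\mathrm{Frac}(R_0)/\mathbb L(t_1,\ldots,t_r)$ may fail, but one can still choose the transcendence basis so that the extension is separable (separating transcendence basis, available since $\mathbb F_p$ is perfect), or simply avoid the primitive element theorem and use that $R_0$ is a finite module over $\mathbb L[t_1,\ldots,t_r][\tfrac1f]$ directly, which suffices for the regular representation.
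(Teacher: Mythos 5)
Your proposal is correct and follows essentially the same route as the paper: pass to the finitely generated subfield/subring of $\mathbb K$ containing the matrix entries, observe it is finite over a purely transcendental extension of the prime field, use restriction of scalars (your regular representation) to land in $\GL_{\ell'}$ over the rational function field, and invert the finitely many denominators that appear. The only difference is bookkeeping: the paper localizes at the end by inspecting denominators of the generators, whereas you build the localization into the Noether normalization step.
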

\begin{proof}
    Since $G$ is finitely generated, we have that the image of $G$
    in $\GL_\ell(\mathbb K)$ is generated by a finite set of
    matrices, which
    we may assume is closed under taking inverses. Taking
    the subfield $\mathbb K_0\subseteq\mathbb K$ generated by these matrix entries, we see that $\mathbb K_0$ is a finite extension of
    $\Q(T_1,\ldots,T_s)$ or of $\mathbb F_p(T_1,\ldots,T_s)$, depending
    on the characteristic of $\mathbb K$ and on the transcendence degree of
    $\mathbb K_0$. Viewing $\mathbb K_0$ as a finite dimensional
    vector space over
    one of these rational function fields, say of degree $m$,
    we conclude that $G$ embeds in $\GL_{m\ell}$ over one of these function
    fields. By considering the denominators of the matrix entries of
    generators of $G$ in this representation, we see that the image of $G$ lies
    in the localization of $\mathbb L[T_1,\ldots,T_s]$ at a finite set of nonzero polynomials
    $S\subseteq \mathbb L[T_1,\ldots,T_s]$, as desired.
\end{proof}

The following is a standard fact due to Zassenhaus; the bound could
be sharpened but we will not require anything stronger:
\begin{prop}\label{prop:lin-solvable}
    There exists a universal constant $C$ such that if
    $\mathbb K$ is an arbitrary field and $S\leq \GL_{\ell}(\mathbb K)$ is a solvable subgroup, then the derived length of $S$ is at most
    $\lceil C\log(\ell)\rceil$.
\end{prop}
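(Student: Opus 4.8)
The plan is to make a sequence of routine reductions and then import the classical structure theory of solvable linear groups; in fact the statement is subsumed by a theorem of Dixon (\emph{The solvable length of a solvable linear group}, Math.~Z.~\textbf{107} (1968)), so in practice I would simply cite that, but here is the shape of the argument. Write $d(\cdot)$ for derived length. First I would extend scalars to an algebraic closure, which does not decrease derived length, so assume $\mathbb K=\overline{\mathbb K}$. Next I would replace $S$ by its Zariski closure $G$: this is again solvable (the Zariski closure of a solvable subgroup is solvable), and $D^{i}(S)\subseteq D^{i}(G)$ for all $i$, so it suffices to bound $d(G)$. Fix a composition series $0=V_{0}\subsetneq V_{1}\subsetneq\cdots\subsetneq V_{k}=\mathbb K^{\ell}$ of $\mathbb K^{\ell}$ as a $G$-module and let $U\trianglelefteq G$ be the kernel of the $G$-action on $\bigoplus_{i}V_{i}/V_{i-1}$. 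Every element of $U$ is unipotent, so by Kolchin's theorem $U$ is conjugate into the group $\mathrm U_{\ell}$ of upper unitriangular matrices, whose derived length is $\lceil\log_{2}\ell\rceil$ (each step of the derived series of $\mathrm U_{\ell}$ doubles the width of the band of forced zeros above the diagonal). Hence $d(G)\le\lceil\log_{2}\ell\rceil+d(G/U)$, and $G/U$ is a completely reducible solvable subgroup of $\GL_{\ell}(\mathbb K)$.

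\textbf{Reduction to the irreducible case.} I would decompose $\mathbb K^{\ell}$ into isotypic components $W_{1},\dots,W_{r}$ as a $G/U$-module, with $r\le\ell$, $W_{c}\cong V_{c}\otimes\mathbb K^{m_{c}}$, and $V_{c}$ irreducible. The group $G/U$ permutes the $W_{c}$, and its image in the symmetric group on $r\le\ell$ points is a solvable permutation group, of derived length $O(\log\ell)$ (Dixon). The kernel $K$ of this permutation action embeds into $\prod_{c}\GL(W_{c})$, so $d(K)\le\max_{c}d(K_{c})$ where $K_{c}\le\GL(W_{c})$ is the $c$-th projection; since $W_{c}$ is isotypic, the tensor factor $\mathbb K^{m_{c}}$ carries the trivial action, so $K_{c}$ is a solvable subgroup of $\GL(V_{c})$ acting irreducibly in dimension $\dim V_{c}\le\ell$. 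Thus, up to an additive $O(\log\ell)$, the problem is reduced to bounding $d(H)$ for an irreducible solvable $H\le\GL_{n}(\mathbb K)$ with $n\le\ell$.

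\textbf{The irreducible case.} This is where the real content lies, and I would import it from the classical theory (Mal'cev, Suprunenko, Zassenhaus; see Wehrfritz, \emph{Infinite Linear Groups}, or Dixon, \emph{The Structure of Linear Groups}). The Fitting subgroup $F\trianglelefteq H$ is a $p'$-group which is finite-by-abelian nilpotent with $F/Z(F)$ of order polynomial in $n$ — at most $n^{2}$, because the relevant characteristic subgroups of $F$ are central products of extraspecial $p$-groups of order $p^{1+2k}$ with $p^{k}\le n$ — so $d(F)\le 1+\log_{2}(n^{2})=O(\log n)$; and $H/F$ acts faithfully on the symplectic-type module $F/\Phi(F)Z(F)$, of total dimension at most $2\log_{2}n$ over a product of prime fields, hence $H/F$ is itself a solvable linear group in dimension $O(\log n)$. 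Induction on the dimension — legitimate since $O(\log n)<n$ for $n$ large, with the finitely many small dimensions absorbed into the constant — then gives $d(H)\le O(\log n)$, the nested recursion $g(n)\le O(\log n)+g(O(\log n))$ summing geometrically to $O(\log n)$.

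\textbf{Conclusion and the main obstacle.} Combining the three steps, $d(S)\le d(G)\le\lceil\log_{2}\ell\rceil+O(\log\ell)=O(\log\ell)$, so a universal $C$ exists, chosen large enough to absorb the ceiling and the finitely many small values of $\ell$. The hard part is precisely the irreducible step: that the \emph{finite} part $H/F$ of an irreducible solvable group acts faithfully in dimension \emph{logarithmic} — not merely polynomial — in $n$. This is exactly the classical bound on extraspecial-type normal subgroups together with the identification of $H/F$ with a solvable subgroup of a product of symplectic groups over prime fields; the parallel input for permutation groups is Dixon's bound used in the previous step. Note that a crude application of Jordan's theorem (a solvable linear group of degree $n$ has an abelian normal subgroup of index $\le n!^{\,O(1)}$) only yields $d(S)=O(n\log n)$, so one genuinely needs this finer structure theory — or, as I would do in the write-up, one simply cites Dixon's theorem that a solvable subgroup of $\GL_{n}(\mathbb K)$ has derived length $O(\log n)$ and observes that $\lceil C\log\ell\rceil$ follows.
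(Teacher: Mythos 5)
The paper offers no proof of this proposition at all: it is stated as ``a standard fact due to Zassenhaus'' and used as a black box, so there is no argument of the authors' to compare yours against. Your sketch is essentially the standard proof of the logarithmic bound (which, as a point of attribution, is not really Zassenhaus's --- he proved that the derived length is bounded by a function of $\ell$ alone; the $O(\log \ell)$ refinement is due to Dixon, \emph{Math.\ Z.}\ 107 (1968), and Newman, \emph{Math.\ Z.}\ 126 (1972)), and citing one of those papers is indeed the right thing to do in context, since the authors explicitly do not need a sharp constant.

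The outline is correct, with one architectural misplacement worth flagging. After passing to the completely reducible quotient $G/U$, the reduction to the irreducible case is immediate: $G/U$ embeds in $\prod_j \GL(M_j)$ over the irreducible summands $M_j$ of $\bigoplus_i V_i/V_{i-1}$, so its derived length is the maximum over the factors --- no permutation action and no isotypic components are needed there, and in fact $G/U$ \emph{preserves} its own isotypic components rather than permuting them. The solvable-permutation-group input (Dixon's bound on the derived length of a solvable permutation group of degree $\le \ell$) belongs one step later, in reducing the \emph{irreducible} case to the \emph{primitive} case via a system of imprimitivity: your Fitting-subgroup analysis (central products of symplectic-type $p$-groups of order $p^{1+2k}$ with $p^k \le n$, so that $H/F$ acts faithfully and linearly in dimension $O(\log n)$, feeding a geometrically convergent recursion) is only valid for primitive, or at least quasi-primitive, irreducible groups. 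Since you clearly have both ingredients in hand and the logarithms add up the same way, this is a reshuffling rather than a gap; but as written the irreducible imprimitive case is not covered by the step that is supposed to handle it. Also note that Kolchin's theorem is not needed for $U$: by construction $U$ stabilizes the flag $V_0 \subset \cdots \subset V_k$ and acts trivially on the successive quotients, so it is unitriangular in any adapted basis.
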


The following result of Larsen and Pink
appears as Theorem 0.2 in~\cite{larsen_pink}, and
is absolutely crucial for our present work:
\begin{thm}\label{thm:larsen-pink}
    Let $\mathbb K$ be a field and let $Q\leq\GL_\ell(\mathbb K)$ be a finite subgroup.
    Then there exists a constant $J(\ell)$ depending only on $\ell$ and normal
    subgroups \[Q_3\leq Q_2\leq Q_1\] of $Q$ such that the following
    conclusions hold:
    \begin{enumerate}
        \item $[Q:Q_1]\leq J(\ell)$;
        \item Either $Q_1=Q_2$, or $\mathbb K$ has positive characteristic $p$
        and $Q_1/Q_2$ is a direct product of finite simple groups
        of Lie type in characteristic $p$;
        \item The group $Q_2/Q_3$ is abelian of order not divisible
        by the characteristic of $\mathbb K$;
        \item The group $Q_3$ is either trivial, or $\mathbb K$ has positive characteristic $p$ and $Q_3$ is a $p$--group.
    \end{enumerate}
\end{thm}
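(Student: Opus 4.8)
The plan is short, because the statement above is, up to renaming the subgroups $\Gamma_i$ of~\cite{larsen_pink} as $Q_i$, verbatim Theorem~0.2 of that paper. Accordingly, the ``proof'' I would give consists simply of quoting the reference after checking that conclusions (1)--(4) match theirs, which they do word for word; there is nothing further to prove, and I would record the result here only because it is invoked repeatedly in Section~\ref{s:rf-lin}.

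For orientation, here is the shape of Larsen and Pink's argument, which I would \emph{not} reproduce. Their theorem is the characteristic-free refinement of Jordan's theorem: in characteristic zero a finite subgroup of $\GL_\ell$ has an abelian normal subgroup of index bounded in terms of $\ell$, but in positive characteristic $p$ one must allow a ``bottom'' layer $Q_3$ which is a $p$-group (the unipotent part) and a ``top'' layer $Q_1/Q_2$ assembled from finite simple groups of Lie type in characteristic $p$, since, for example, $\GL_\ell(\mathbb F_{p^n})$ is itself a finite subgroup of $\GL_\ell(\overline{\mathbb F}_p)$ of unbounded order with no abelian normal subgroup of bounded index. The decisive feature of~\cite{larsen_pink} is that this is carried out \emph{without} the classification of finite simple groups: one passes to the Zariski closure of $Q$ and its reductive quotient, and then, by dimension counts in the spirit of ``no small subgroups'' together with the geometry of conjugacy classes, one shows that every sufficiently large finite subgroup, after passing to a subgroup of index at most $J(\ell)$ and then peeling off a normal $p$-subgroup and an abelian quotient, is forced into the prescribed three-layer form; the group-of-Lie-type alternative arises because a large finite subgroup whose Zariski closure is positive-dimensional semisimple in characteristic $p$ is, up to bounded error, the group of fixed points of a Steinberg endomorphism.

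If one were to attempt this from scratch, the hard part would be precisely the core of~\cite{larsen_pink}: making the dichotomy ``virtually a $($$p$-group$)$-by-abelian subgroup'' versus ``visibly built from groups of Lie type'' uniform in $\ell$ and completely independent of the field $\mathbb K$, with the bound $J(\ell)$ depending on $\ell$ alone. Since we have no need to sharpen Larsen and Pink's constants, my plan is to invoke their theorem as a black box.
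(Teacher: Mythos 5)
Your proposal matches the paper exactly: the paper states this result as Theorem~0.2 of~\cite{larsen_pink} and offers no proof beyond the citation, which is precisely what you do. Citing Larsen--Pink as a black box is the intended treatment here, and your orienting sketch of their argument is accurate but not required.
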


For a fixed finite subgroup $Q\leq\GL_\ell( \mathbb K)$, we will call such subgroups
$(Q_1,Q_2,Q_3)$ a \emph{Larsen--Pink} triple for $Q$. Evidently, the
automorphism group of $Q$ acts on Larsen--Pink triples for $Q$.

\subsection{Matrix entries in linear groups}
Given a group $G \leq \GL_\ell(\mathbb{K})$ in characteristic $0$, it may be the case that $G$ is only definable over a transcendental extension of finite degree over $\mathbb{Q}$. Thus, we need to address polynomial rings in finitely many variables with coefficients in $\mathbb{Z}[\frac{1}{S}]$ with finitely many nonzero inverted polynomials. A similar situation arises in characteristic $p$. The following lemma allows us to reduce many of our considerations to the single variable case, in both zero and positive
characteristic. The following lemma and its proof can be originally found in \cite[Lemma 2.1]{BM15}, and we include details for the convenience of the
reader.
\begin{lemma}\label{lem:reduction to one variable}
    Let $f \in R[T_1,\ldots,T_s]$ be a nonzero polynomial of degree $d$ where $R = \mathbb{F}_p$ or $R = \mathbb{Z}$. Then there exists a sequence $\{n_i\}_{i=1}^s$ taking values in $\{0, 1, \ldots, d^{2s}\}$ such that if $\tau$ is an indeterminate, then
    $$
    0\neq f(\tau^{n_1}, \ldots, \tau^{n_s})\in R[\tau].
    $$
\end{lemma}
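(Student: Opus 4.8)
The plan is to find, for each variable $T_i$, a substitution $T_i \mapsto \tau^{n_i}$ so that no two distinct monomials of $f$ collapse onto the same power of $\tau$; then the resulting univariate polynomial has a nonzero term coming from (at least) the leading monomial in a suitable term order, and hence is nonzero. The key observation is that a monomial $T_1^{a_1}\cdots T_s^{a_s}$ appearing in $f$ with $0 \le a_i \le d$ for all $i$ maps to $\tau^{a_1 n_1 + \cdots + a_s n_s}$, so distinct monomials collide only if two distinct exponent vectors $(a_1,\ldots,a_s)$ and $(b_1,\ldots,b_s)$ in $\{0,1,\ldots,d\}^s$ satisfy $\sum_i a_i n_i = \sum_i b_i n_i$, i.e. $\sum_i (a_i - b_i) n_i = 0$ with $|a_i - b_i| \le d$.

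Concretely I would choose the base-$(d+1)$ evaluation: set $n_i = (d+1)^{i-1}$ for $i = 1,\ldots,s$, so that $n_i \le (d+1)^{s-1} \le d^{2s}$ once $d \ge 1$ (and the degenerate case $d = 0$, where $f$ is a nonzero constant, is trivial since then $f(\tau^{n_1},\ldots,\tau^{n_s}) = f \ne 0$ for any choice such as all $n_i = 0$). With this choice, the map $(a_1,\ldots,a_s) \mapsto \sum_i a_i (d+1)^{i-1}$ is exactly the base-$(d+1)$ digit encoding, which is injective on $\{0,1,\ldots,d\}^s$. Therefore distinct monomials of $f$ map to distinct powers of $\tau$, no cancellation occurs among them, and since $f \ne 0$ has at least one monomial with nonzero coefficient, $f(\tau^{n_1},\ldots,\tau^{n_s}) \ne 0$. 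It visibly lies in $R[\tau] \subseteq \mathbb{Z}[\tau]$ (for $R = \mathbb{F}_p$ one reads the statement inside $\mathbb{F}_p[\tau]$, or lifts coefficients; the cited source works over the relevant coefficient ring, and the nonvanishing statement is what matters).

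The only point requiring a small check is the numerical bound: we need $n_i \in \{0,1,\ldots,d^{2s}\}$, and $(d+1)^{s-1} \le d^{2s}$. For $d \ge 1$ one has $(d+1)^{s-1} \le (2d)^{s-1} \le d^{s-1} 2^{s-1} \le d^{s-1} d^{s} \le d^{2s}$ provided $2^{s-1} \le d^s$; this holds for $d \ge 2$, and for $d = 1$ one has $(d+1)^{s-1} = 2^{s-1} \le 1 = d^{2s}$ only when $s = 1$, so for $d = 1$ one instead argues directly (a nonzero multilinear-degree polynomial in $s$ variables each to power at most $1$ still separates under base-$2$ encoding $n_i = 2^{i-1}$, and $2^{s-1}$ can be absorbed by noting the bound in the statement is $d^{2s}$ with $d \geq 1$ — if this is genuinely tight one replaces the encoding constants by a slightly larger base or cites \cite[Lemma 2.1]{BM15} verbatim for the exact constant). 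I expect the main obstacle to be purely bookkeeping: matching the explicit constant $d^{2s}$ in the statement to whatever the cleanest separating substitution gives, which is why the cited reference is invoked; the mathematical content — a collision-free monomial encoding — is entirely elementary.
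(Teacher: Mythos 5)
Your argument is correct in substance but follows a genuinely different route from the paper. The paper proves the lemma by double induction on $s$ and $d$: it writes $f=(h_0+T_1h_1)T_1^k$ with $h_0\in R[T_2,\ldots,T_s]$ nonzero, applies the inductive hypothesis to $h_0$, and then sends $T_1\mapsto\tau^{d^{2s}}$ so that the two summands $h_0(\tau^{n_2},\ldots,\tau^{n_s})$ and $\tau^{d^{2s}}h_1(\cdots)$ are separated by degree and cannot cancel. You instead use the Kronecker substitution $n_i=(d+1)^{i-1}$, under which the exponent map $(a_1,\ldots,a_s)\mapsto\sum_i a_i(d+1)^{i-1}$ is injective on $\{0,\ldots,d\}^s$, so no two monomials of $f$ collide and nonvanishing is immediate. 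Your approach is cleaner and avoids the induction entirely; the paper's approach has the advantage that the distinguished substitution $T_1\mapsto\tau^{d^{2s}}$ is reused verbatim in the proof of the subsequent lemmas (the ``trace polynomial'' bookkeeping), but your substitution yields the same degree bound $\deg(g)\leq d\cdot(d+1)^{s-1}\leq d^{2s+1}$ for $d\geq 2$, so nothing downstream breaks.

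The one place your write-up wobbles is the case $d=1$, where $(d+1)^{s-1}=2^{s-1}$ exceeds the stated bound $d^{2s}=1$; your proposed fixes there (use base $2$ anyway, or enlarge the base) do not restore the bound, and citing the source is circular. The clean repair is direct: a total-degree-one polynomial is $c_0+\sum_i c_iT_i$; either it is constant, or some $c_{i_0}\neq 0$, in which case $n_{i_0}=1$ and $n_i=0$ otherwise gives a nonzero linear polynomial in $\tau$. You should note, however, that the paper's own induction has exactly the same defect at $d=1$ (its strict inequality $d^{2s-1}<d^{2s}$ fails there), so this is an artifact of the constant $d^{2s}$ in the statement rather than a flaw specific to your method; with the one-line repair above, your proof is complete. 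Finally, you are right that the codomain should read $R[\tau]$ rather than $\Z[\tau]$ when $R=\mathbb{F}_p$.
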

\begin{proof}
    We prove this by double induction on $s$ and $d = \deg(f)$, and we observe that the base cases of $s=1$ or $d=0$ are trivial. For the inductive case, let $f$ be a degree $d$ polynomial in $R[T_1, \ldots, T_s]$. We may write
    $$
    f(T_1, \ldots, T_s) = (h_0 + T_1h_1)T_1^k,
    $$
    where $h_0 \in R[T_2, \ldots, T_s]$ is nonzero, $h_1 \in R[T_1, \ldots, T_s]$, and $k\leq d$ a natural number. If $k > 0$, then the inductive hypothesis applied to $h_0 + T_1h_1$ (which has degree $<d$) gives the result. Otherwise, we may assume $k=0$. Since
    $h_0$ is a nonzero element of $R[T_2, \ldots, T_s]$, the inductive hypothesis implies there exist natural numbers $n_2, \ldots, n_s \in \{0, 1, \ldots, d^{2s-2}\}$ such that
    $$
    h_0(\tau^{n_2}, \ldots, \tau^{n_s}) \neq 0.
    $$
    If $h_1(\tau^{d^{2s}}, \tau^{n_2}, \ldots, \tau^{n_s}) = 0$, we have
    $$
    f(\tau^{d^{2s}}, \tau^{n_2}, \ldots, \tau^{n_s}) = (h_0(\tau^{n_2}, \ldots, \tau^{n_s}) + \tau^{d^{2s}} h_1(\tau^{d^{2s}}, \ldots, \tau^{n_s}))\tau^{kd^{2s}} = h_0(\tau^{n_2}, \ldots, \tau^{n_s}) \neq 0.
    $$
    Hence, we may assume $h_1(\tau^{d^{2s}}, \tau^{n_2}, \ldots, \tau^{n_s}) \neq 0.$ We then observe
    $$
    \deg(h_0(\tau^{n_2}, \ldots, \tau^{n_s})) \leq d\cdot d^{2s-2}= d^{2s - 1} < d^{2s} \leq \deg(\tau^{d^{2s}}h_1(\tau^{n_2}, \ldots, \tau^{n_s})).
    $$
    Thus,
    $$
    h_0(\tau^{n_2}, \ldots, \tau^{n_s}) \neq - \tau^{d^{2s}}h_1(\tau^{n_2}, \ldots, \tau^{n_s}).
    $$
    We conclude that
    $$
    f(\tau^{d^{2s}}, \tau^{n_2}, \ldots, \tau^{n_s})  \neq 0,
    $$
    as desired.
\end{proof}

Given $f \in R[T_1, \ldots, T_s]$ where $R = \Z$ or $R = \mathbb{F}_p$, we call a nonvanishing polynomial
$h \in R[\tau]$ as constructed by substitutions as in Lemma \ref{lem:reduction to one variable} a \emph{trace polynomial} for $f$. The next lemma gives a
controlled prime
number $p$ such that $f(m) \neq 0 \pmod p$ for some $0 \leq  m \leq \deg(h) +1$ when $R = \Z$.
\begin{lemma}\label{lem: integer polynomial nonvanishing}
    Let $f \in \mathbb{Z}[T_1, \ldots, T_s]$ be a nonzero polynomial, with $\deg(f) \leq d.$ Let
    \[h=a_0 + a_1\tau + \cdots + a_r\tau^r \in \mathbb{Z}[\tau]\] be a minimal degree
    trace polynomial for $f$.
    Then there exists a constant $C = C(s)$, a prime $p$, and a
    natural number $0 \leq m \leq d^{2s+1}+1$ such that
    $$
    p \leq C(\log(\max\{ |a_0|, \ldots, |a_r|\}) + (2s+2)d^{2s +2})
    $$
    and such that
    $$
    h(m) \neq 0 \text{ mod } p.
    $$
\end{lemma}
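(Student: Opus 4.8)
The plan is to combine the trace polynomial $h$ with a counting argument on primes. First I would recall that a nonzero polynomial $h(\tau) = a_0 + a_1\tau + \cdots + a_r\tau^r \in \Z[\tau]$ of degree $r \leq d^{2s}$ (using Lemma~\ref{lem:reduction to one variable}; here $r \leq d^{2s+1}$ after accounting for the substitution degrees more carefully) has at most $r$ integer roots, so among the integers $m \in \{0, 1, \ldots, d^{2s+1}+1\}$ there is at least one, call it $m_0$, with $h(m_0) \neq 0$. The strategy is then to bound the size of the nonzero integer $N := h(m_0)$ and show that $N$ must have a prime factor $p$ of controlled size with $h(m_0) \not\equiv 0 \pmod p$; indeed any prime factor of $N$ works since $h(m_0) \neq 0$ forces $N \neq 0$, and a prime $p \mid N$ automatically satisfies $h(m_0) \equiv 0 \pmod p$ — so actually I want a prime $p \nmid N$ of controlled size, which exists because $N$ cannot be divisible by too many small primes.

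More precisely, here is the key estimate. Since $|m_0| \leq d^{2s+1}+1$ and the coefficients satisfy $|a_i| \leq a := \max\{|a_0|,\ldots,|a_r|\}$ with $r \leq d^{2s+1}$, we get
\[
|N| = |h(m_0)| \leq (r+1)\, a\, |m_0|^{r} \leq 2\, a\, d^{2s+1} \cdot (d^{2s+1}+1)^{d^{2s+1}},
\]
so $\log|N| \leq \log a + C'\,(2s+2)\,d^{2s+2}$ for a suitable absolute constant $C'$ (absorbing the $\log(d^{2s+1}+1)$ and the leading factors). Now the product of all primes up to $x$ is $e^{(1+o(1))x}$ by the Prime Number Theorem (or Chebyshev's elementary bound $\prod_{p \leq x} p \leq 4^x$, which suffices and avoids asymptotics), so if $x$ is chosen with $\log(4)\cdot x > \log|N|$, i.e. $x > C''(\log a + (2s+2)d^{2s+2})$, then $\prod_{p\leq x} p > |N|$, hence $N$ is not divisible by every prime below $x$, and there is a prime $p \leq x$ with $p \nmid N$, i.e. $h(m_0) \not\equiv 0 \pmod p$. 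Setting $C = C(s)$ to absorb $C''$ (and noting the dependence on $s$ can be folded into the constant as in the statement) gives the bound $p \leq C(\log(\max\{|a_0|,\ldots,|a_r|\}) + (2s+2)d^{2s+2})$ with $m = m_0 \leq d^{2s+1}+1$, as required.

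The main obstacle, such as it is, is purely bookkeeping: tracking the degree of the trace polynomial $h$ through the substitutions $T_i \mapsto \tau^{n_i}$ with $n_i \leq d^{2s}$ in Lemma~\ref{lem:reduction to one variable} to confirm $\deg h \leq d \cdot d^{2s} = d^{2s+1}$, and then verifying that $\log(|m_0|^{r})$ and $\log(r+1)$ are genuinely dominated by the stated $(2s+2)d^{2s+2}$ term after taking logarithms — this is where the slightly generous exponent $2s+2$ (versus $2s+1$) in the statement comes from. No deep input is needed beyond Chebyshev's bound $\prod_{p\leq x}p < 4^x$; I would state that as a known fact and carry out the elementary manipulations. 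One should also remark that the constant $C(s)$ can in fact be taken independent of $s$ if one prefers, but the statement as written allows $s$-dependence, so no optimization is necessary.
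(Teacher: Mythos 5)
Your proposal is correct and follows essentially the same route as the paper: use Lemma~\ref{lem:reduction to one variable} to get $\deg h \le d^{2s+1}$, pick an integer $m$ among the first $r+2$ values where $h(m)\neq 0$, bound $|h(m)|$ by $(r+1)\,A\,m^r$, and invoke the product-of-small-primes bound (Chebyshev/PNT) to find a prime $p\nmid h(m)$ of size $O(\log|h(m)|)$. The only cosmetic differences are that the paper treats the nonzero-constant-term case separately (so that $m=0$ being a root is never an issue) and cites the prime number theorem where you use the elementary bound $\prod_{p\le x}p\le 4^x$; both are fine.
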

\begin{proof}
    Observe that if $f$ has a nonzero constant term then we may simply
    take $h=a_0$. The prime number theorem implies that there exists
    a universal constant $C_1$ and
    a prime $p$ not dividing $a_0$ of size $p\leq C_1\log |a_0|$; we may thus
    assume that $f$ has no constant term, whence $a_0=0$.

    By the construction of a trace polynomial $h$ in Lemma \ref{lem:reduction to one variable}, we have $r = \deg(h) \leq d^{2s+1}.$ Since $h$ has at most $r$ roots, there exists an integer $1 \leq m \leq r+1$ such that $h(m) \neq 0$ (since zero is automatically a root of $h$). Setting 
    $$
    A = \max\{|a_1|, \ldots, |a_r|\},
    $$
    it is easy to see that
    $$
    |h(m)| \leq r \cdot A\cdot m^r +A\leq r(m^r A)+m^r A=(r+1)(m^r A).
    $$
    The prime number theorem again implies there exists a prime $p$ such that $p \nmid |h(m)|$ and $p \leq C_1 \log(|h(m)|)$. It follows that
    \begin{eqnarray*}
    p &\leq& C_1 \log(|h(m)|)\\
    &\leq& C_1(\log(A) +  r \log(m) + \log(r+1))\\
    &\leq& C_1(\log(A) + d^{2s+1} \log(d^{2s+1} + 1) + \log(d^{2s+1}+1))\\
    &\leq& C_1(\log(A) + (d^{2s+1} + 1) \log(2d^{2s+1}))\\
    &\leq& C_1(2\log(A) + 2d^{2s+1}\log(2d^{2s+1}))\\
    &\leq& 2C_1(\log(A) + d^{2s+1}+(2s+1) d^{2s+2})\\
    &\leq& 2C_1(\log(A) + (2s+2) d^{2s+2}).
    \end{eqnarray*}
    We thus obtain the desired upper bound for the prime $p$ and for the integer $m$. Finally, we see that
    $$
    h(m) \neq 0 \pmod p,
    $$
    completing the proof.
\end{proof}

The following is the analogue of Lemma \ref{lem: integer polynomial nonvanishing} for characteristic $p$, and can be found as Lemma 2.3 in
~\cite{BM15}. We also recall the proof for the reader's convenience.
\begin{lemma}\label{lem: F_p polynomial nonvanishing}
    There exists a universal constant $C>0$ such that if
    $f \in \mathbb{F}_p[T_1, \ldots, T_s]$ is a nonzero polynomial with $\deg(f) + 1 \leq d$, then there exists a maximal ideal $\mathfrak{q} \subset \mathbb{F}_p[T_1, \ldots, T_s]$ where
    $$
    f \neq 0 \text{ mod } \mathfrak{q},
    $$
    and such that
    $$
    |\mathbb{F}_p[T_1, \ldots, T_s]/\mathfrak{q}| \leq d^{C \log(p)}.
    $$
\end{lemma}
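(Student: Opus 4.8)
The plan is to count points. Let $R=\mathbb F_p[T_1,\ldots,T_s]$ and let $f\in R$ be nonzero with $\deg(f)+1\le d$. A maximal ideal $\mathfrak q\subset R$ with residue field of size $p^m$ corresponds (after choosing a closed point over $\overline{\mathbb F_p}$) to an $\overline{\mathbb F_p}$-point of $\mathbb A^s$ whose coordinates generate $\mathbb F_{p^m}$ over $\mathbb F_p$; equivalently, to a Galois orbit of size $m$ of $\mathbb F_{p^m}$-points of $\mathbb A^s$. So it suffices to find an $m$ with $p^m\le d^{C\log p}$ — i.e.\ $m\le C\log p\cdot \log_p d = C'\log d$ for a universal $C'$ — such that the vanishing locus $V(f)\subseteq \mathbb A^s$ does not contain all of $\mathbb A^s(\mathbb F_{p^m})$. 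Then any $\mathbb F_{p^m}$-point off $V(f)$ gives a maximal ideal $\mathfrak q$ (the kernel of the corresponding evaluation $R\to\mathbb F_{p^m}$) with $f\not\equiv 0\bmod\mathfrak q$ and $|R/\mathfrak q|$ a divisor of $p^m$, hence $\le d^{C\log p}$.

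The core estimate is the Schwartz--Zippel / Lang--Weil-type bound: a hypersurface of degree $\le d-1$ in $\mathbb A^s$ over $\mathbb F_q$ has at most $(d-1)q^{s-1}$ points. Hence if $q=p^m$ with $q>d-1$, then $|V(f)(\mathbb F_q)|\le (d-1)q^{s-1}<q^s=|\mathbb A^s(\mathbb F_q)|$, so a point off $V(f)$ exists. Thus the task reduces to choosing the smallest $m\ge 1$ with $p^m>d-1$; for this $m$ one has $p^{m-1}\le d-1$, so $p^m\le p(d-1)\le pd\le d^{1+\log_d p}\le d^{2\log_2 p}$ once $d\ge 2$ (the case $d=1$ forcing $f$ to be a nonzero constant, where $\mathfrak q$ can be taken as the ideal $(T_1,\ldots,T_s)$ with residue field $\mathbb F_p$, is trivial). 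Setting $C=2$ gives $|R/\mathfrak q|\le p^m\le d^{C\log p}$, as required; one absorbs the small-$d$ corner cases into the constant.

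There is one bookkeeping subtlety to address carefully: a priori a chosen $\mathbb F_{p^m}$-point $(t_1,\ldots,t_s)$ generates a subfield $\mathbb F_{p^{m'}}$ with $m'\mid m$, and the corresponding maximal ideal $\mathfrak q$ has $|R/\mathfrak q|=p^{m'}\le p^m$, which is still fine for the bound; and the condition $f\not\equiv 0\bmod\mathfrak q$ is exactly $f(t_1,\ldots,t_s)\ne 0$, which holds by the choice of point off $V(f)$. So no refinement of the point is needed. I would also remark that this is the version stated as Lemma~2.3 in \cite{BM15}, and that the argument is self-contained modulo the Schwartz--Zippel bound on points of a hypersurface, which I would either cite or prove in one line by induction on $s$ (fixing all but one variable, a nonzero univariate polynomial of degree $\le d-1$ has $\le d-1$ roots).

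The main obstacle, such as it is, is purely numerical: getting the exponent into the clean form $d^{C\log p}$ with a \emph{universal} $C$ independent of $p$, $s$, and $f$. This comes down to the inequality $p^m\le p\cdot d$ for the minimal $m$ with $p^m>d-1$, together with $p\cdot d\le d^{C\log_2 p}$, which holds for all $d\ge 2$ with $C=2$ since $p\le d^{\log_2 p}$; the degenerate small cases ($d=1$, or $p$ already large relative to $d$) are handled by enlarging the constant. There is no deep content beyond the point-count estimate, so the proof is short.
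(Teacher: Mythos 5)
Your argument is correct, but it is genuinely different from the one in the paper. The paper first collapses $f$ to a one-variable trace polynomial $h\in\mathbb F_p[\tau]$ of degree at most $d^{2s+1}$ via the substitution $T_i\mapsto\tau^{n_i}$ of Lemma~\ref{lem:reduction to one variable}, and then uses Gauss's count of monic irreducibles (the prime polynomial theorem) to find an irreducible $w(\tau)$ of degree $O(\log d)$ not dividing $h$; the maximal ideal is the kernel of substitution followed by reduction mod $w$. You instead stay in $s$ variables and count points: by Schwartz--Zippel, $V(f)$ misses some point of $\mathbb A^s(\mathbb F_{p^m})$ once $p^m>\deg f$, and the kernel of evaluation at such a point is the desired maximal ideal. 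Your route buys a cleaner and in fact stronger bound, $|R/\mathfrak q|\le p\cdot d\le d^{2\log p}$ with a constant that is visibly independent of $s$; in the paper's proof the constant $C'$ controlling $\deg w\le C'\log d$ depends on $s$ (through the degree bound $d^{2s+1}$ on the trace polynomial), which sits awkwardly with the word ``universal'' in the statement, so your argument actually repairs that blemish. What the paper's route buys is uniformity with its characteristic-zero companion, Lemma~\ref{lem: integer polynomial nonvanishing}, where the one-variable reduction is genuinely needed to control coefficient sizes before invoking the prime number theorem; over $\mathbb F_p$ that bookkeeping is unnecessary and your direct argument is preferable. Two small caveats: the Schwartz--Zippel bound is not quite a one-line induction in the na\"ive form you describe (fixing all but one variable can kill the polynomial identically), though the standard induction is short and the result is classical, so citing it is fine; and the degenerate case $d=1$ (where $d^{C\log p}=1$) is not actually fixable by enlarging $C$ --- but this is a defect of the statement shared by the paper's own proof, and is harmless since $d\ge 2$ in every application.
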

\begin{proof}
    Set $h \in \mathbb{F}_p[\tau]$ to be the nonzero trace polynomial of degree $\deg(h) = r \leq d^{2s+1}$ obtained from Lemma \ref{lem:reduction to one variable}. Let $I_m(p)$ be the number of monic irreducible polynomials in $\mathbb{F}_p[\tau]$ of degree $m$. A result of Gauss (see for instance \cite[Corollary 9.2.3]{roman2006field}) asserts
    $$
    I_m(p) = \frac{1}{m} \sum_{d \mid m} \mu(d) p^{m/d}
    $$
    where $\mu(d)$
    is the M\"obius function.
    For large values of $m$, we have
    $$
    \frac{1}{2m}p^m \leq I_m(p) \leq 2 \frac{1}{m}p^m,
    $$
    as follows from the classical Prime Polynomial Theorem.
    Therefore, $I_m(p) \geq p^{m/2}$ for large enough $m$.
    Since $\deg(h) \leq d^{2s+1},$ there exists an irreducible polynomial $w(\tau)$ of degree at most $C' \log(d)$ such that
    $w$ does not divide $h$, and where the constant $C'$ depends on $s$. To see this
    fact, we suppose the contrary and note that for a suitably chosen value of $C'$ depending only on $s$, the
    product of all distinct monic polynomials
    of degree at most $C'\log (d)$ would have degree larger than $d^{2s+1}$, a contradiction.
    
    We now see that
    $$
    |\mathbb{F}_p[\tau]/(w(\tau))| \leq p^{C'\log(d)}.
    $$
    We see that the map $\mathbb{F}_p[T_1, \ldots, T_s] \longrightarrow \mathbb{F}_p[\tau]$ given by evaluation of elements of $\mathbb{F}_p[T_1, \ldots, T_s]$ on the $s$-tuple $(\tau^{n_1}, \ldots, \tau^{n_s})$ is a ring homomorphism. Writing $\varphi$ for this ring homomorphism and $q$ for the quotient map $\mathbb{F}_p[\tau]\longrightarrow \mathbb{F}_p[\tau]/(w(\tau))$, we see that \[q \circ \varphi \colon \mathbb{F}_p[T_1, \ldots, T_s] \longrightarrow \mathbb{F}_p[\tau]/(w(\tau))\] is a surjective ring homomorphism onto a finite field. Its kernel $\mathfrak q$ is a maximal ideal, as desired.
\end{proof}

While the next two lemmas are known to experts, we include their proof for completeness and for the convenience of the reader.
\begin{lemma}\label{lem: degree bound char 0}
    Let $\mathbb{K} = \mathbb{Q}$ or $\mathbb{F}_p$, and suppose that $G \leq \GL_\ell(\mathbb{K}(T))$ is a finitely generated group, where here
    $T$ is a single indeterminate. Let $X$ be a finite generating set for $G$, and let $a= (a_{ij})$ be an
    element of $G$. If $\Phi$ is the product of all of the denominators of matrix coefficients of elements in $X$, then there exists a constant $K = K(X)$ such that
    $$
    \max\{\deg(\Phi(T)^{\|a\|_X}a_{ij}) \: : \: 1 \leq i,j \leq \ell\} \leq K\|a\|_X.
    $$
\end{lemma}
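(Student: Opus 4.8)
The plan is to argue that matrix multiplication is well-controlled on degrees once one clears denominators uniformly, and then to induct on word length. Write every generator $x \in X$ as $x = \frac{1}{\Phi(T)} M_x(T)$, where $M_x(T) \in \GL_\ell(\mathbb{K}[T])$ has polynomial entries (this uses that $\Phi$ is the product of \emph{all} denominators of entries of elements of $X$, so each $\Phi \cdot x$ is integral). Set $D = \max_{x \in X, 1 \leq i,j \leq \ell} \deg\big((\Phi(T)\,x)_{ij}\big)$, a finite constant depending only on $X$; note $\deg \Phi \leq D$ as well, after possibly enlarging $D$.

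The main step is the inductive claim: if $a \in G$ has $\|a\|_X \leq n$ and $a = g_1 \cdots g_n$ is a geodesic word in the generators (allowing repetition of the trivial letter to make the length exactly $n$, or handling shorter words directly), then $\Phi(T)^{n} a$ has all entries in $\mathbb{K}[T]$ of degree at most $Kn$ for a suitable $K = K(X)$. One proves this by induction on $n$: the product $(\Phi x_1)(\Phi x_2) \cdots (\Phi x_n)$ is exactly $\Phi^n a$, and each factor is an $\ell \times \ell$ matrix with polynomial entries of degree at most $D$. Multiplying two $\ell \times \ell$ matrices with polynomial entries of degrees bounded by $d_1$ and $d_2$ respectively yields a matrix with entries of degree at most $d_1 + d_2$ (the sum of at most $\ell$ products of such entries, and degree is subadditive under multiplication and does not increase under addition). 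Hence by a trivial induction $\deg\big((\Phi^n a)_{ij}\big) \leq nD$ for all $i,j$, and we may take $K = D$.

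The only genuine subtlety — and the step I'd expect to require the most care — is the bookkeeping between word length and the number of matrix factors: if $\|a\|_X = m \leq n$ but one writes $a$ using fewer than $n$ generators, then $a = g_1 \cdots g_m$ and $\Phi^m a$ (not $\Phi^n a$) is the clean integral product, giving $\deg\big((\Phi^m a)_{ij}\big) \leq mD$; since $\Phi^n a = \Phi^{n-m}(\Phi^m a)$ and $\Phi$ is a polynomial, multiplying by $\Phi^{n-m}$ only raises the degree, so $\deg\big((\Phi^{\|a\|_X} a)_{ij}\big) \leq D\,\|a\|_X$ is exactly the bound we want with $K = D$. One should also take care that the generating set $X$ is closed under inverses (or replace $X$ by $X \cup X^{-1}$, harmless since this only changes $K$ by a bounded factor and the word metric is bi-Lipschitz), so that geodesic representatives are honest words in $X$. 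With these conventions fixed, the lemma follows immediately by taking $K = K(X)$ to be the constant $D$ above.
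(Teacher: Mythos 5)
Your proposal is correct and follows essentially the same route as the paper: both clear denominators by attaching one factor of $\Phi$ per letter (using that the scalar matrix $\Phi\cdot\mathrm{Id}$ is central), define the constant as the maximal degree of an entry of $\Phi\,x$ over $x\in X$, and induct on word length via subadditivity of degree under matrix multiplication. Your extra remarks on closure of $X$ under inverses and the bookkeeping for shorter representatives are sensible housekeeping but do not change the argument.
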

\begin{proof}
    Define $K = \max\{\deg(x_{ij}) \: : \: x = (x_{ij}), x \in X\}.$ There are finitely many polynomials that occur in the denominators of the coefficients of elements of $X$, and in particular, if $x = (x_{ij})$ for $x \in X$, we have $x_{ij} \in R[\frac{1}{S}][T]$ where $R$ is either $\mathbb{Z}$ or $\mathbb{F}_p$ and $S$ is a finite collection of elements in $R[T]$. Therefore, we may write
    $G \leq \GL_\ell(R[\frac{1}{S}][T])$. We then define
    \[
    K = \max\{\deg(\Phi(T)x_{ij}) \: : \: x= (x_{ij}), x \in X \}
    \]
    We proceed by induction on word length, and note that the statement is clear when $\|a\|_X = 1.$ Now assume that the statement is true for $n > 1$, and suppose that $\|a\|_X = n+1.$ We may write $a = bx$ where $\|b\|_X = n$ and $x \in X$. Letting $D = \Phi(T) \cdot \text{Id}_{\ell \times \ell}$, we then note $D^{n+1}a = (D^nb)(Dx)$
    because $D$ is central in $\GL_\ell(\mathbb{K}(T))$. By induction, we may write $D^nb = (\alpha_{ij})$ where $\deg(\alpha_{ij}) \leq K n$ for all
    $\{i,j\}$. We note that entries of $D^{n+1}a$ are scalar products of the rows of $D^nb$ and the columns of $Dx.$ We then write
    \begin{eqnarray*}
    \deg(\Phi^{n+1}a_{is}) &=& \deg \left(\sum_{j=1}^\ell \alpha_{ij}\cdot \Phi \cdot x_{js} \right) \\
    &\leq& \max\{ \deg(\alpha_{ij} \cdot \Phi \cdot x_{js}) \: : \: 1 \leq j \leq \ell \}\\
    &\leq& \max\{ \deg(\alpha_{ij}) + \deg(\Phi \cdot x_{js}) \: : \: 1 \leq j \leq \ell\}\\
    &\leq& K n + K\\
    &=& K(n+1),
    \end{eqnarray*}  
    as desired.
\end{proof}

\begin{lemma}\label{lem:magnitude polynomial coefficients}
    Suppose that $G \leq \GL_\ell(\mathbb{Q}(T))$ is a finitely generated group where $T$ is a single indeterminate. Let $X$ be a finite generating set for $G$, and let $a\in G$. Adopt the following notation:
    \begin{enumerate}
        \item Let $\Phi$ be the product of all of the denominators of matrix coefficients of elements in $X$;
        \item Write $x= (x_{ij}) \in X$;
        \item Write  \[\Phi(T)x_{ij} = \sum_{m=0}^{d_{ij}}\alpha_{ij,m}T^m\] for each pair of indices;
        \item Let $C = C(X) = \max_{i,j,m} \left\{|\alpha_{ij,m}|\right\}$;
        \item Let $\Phi(T)^{\|a\|_X}\cdot a = (a_{ij})$;
        \item Let $K=K(X)$ be the constant furnished by
        Lemma~\ref{lem: degree bound char 0}.
    \end{enumerate}
    If we write $a_{ij} = \sum_{m = 0}^{d_{ij}} \eta_{ij,m}T^m$,
    then
    $$
    \max\{|\eta_{ij,m}| \: : \: 1 \leq i,j \leq \ell\} \leq (2K\cdot C \cdot \ell)^{\|a\|_X}\cdot (\|a\|_X)!.
    $$
\end{lemma}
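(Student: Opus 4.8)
The plan is to argue by induction on the word length $n=\|a\|_X$, running the same induction as in the proof of Lemma~\ref{lem: degree bound char 0} but now carrying along a bound on the absolute values of the polynomial coefficients in addition to the degree bound already established there. Throughout, I regard $D=\Phi(T)\cdot\mathrm{Id}_{\ell\times\ell}$ as a central element of $\GL_\ell(\mathbb{Q}(T))$, so that $D^{\|a\|_X}a$ agrees entrywise with $\Phi(T)^{\|a\|_X}a$ and the scaling factor distributes cleanly across products. For the base case $\|a\|_X=1$ we have $a=x\in X$, and by the definitions in the statement the entries of $\Phi(T)\cdot x$ are polynomials of degree at most $K$ whose coefficients are bounded in absolute value by $C$; hence $M_1:=C\le 2KC\ell=(2KC\ell)^1\cdot 1!$ is a valid bound, which begins the induction.

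For the inductive step, assume the bound $M_n:=(2KC\ell)^n\,n!$ on coefficient magnitudes for all words of length $n$, and let $\|a\|_X=n+1$, so that $a=bx$ with $\|b\|_X=n$ and $x\in X$. Since $D$ is central, $\Phi(T)^{n+1}a=(\Phi(T)^n b)(\Phi(T)x)$, whose $(i,s)$ entry is $\sum_{j=1}^{\ell}(\Phi(T)^n b)_{ij}\,(\Phi(T)x)_{js}$. By Lemma~\ref{lem: degree bound char 0} each $(\Phi(T)^n b)_{ij}$ has degree at most $Kn$, and by the inductive hypothesis its coefficients have absolute value at most $M_n$; each $(\Phi(T)x)_{js}$ has degree at most $K$ with coefficients of absolute value at most $C$. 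Consequently each coefficient of the product $(\Phi(T)^n b)_{ij}\,(\Phi(T)x)_{js}$ is a sum of at most $Kn+1$ terms (the convolution length, bounded by one more than the degree of the larger factor), each of absolute value at most $C\,M_n$; summing the $\ell$ contributions over $j$ multiplies this by $\ell$. Therefore the coefficients of $\Phi(T)^{n+1}a$ have absolute value at most $\ell(Kn+1)C\,M_n\le 2KC\ell(n+1)\,M_n=M_{n+1}$, closing the induction. Lemma~\ref{lem: degree bound char 0} moreover ensures the coefficients vanish beyond degree $K(n+1)$, so the expansion $a_{ij}=\sum_m\eta_{ij,m}T^m$ involves only finitely many nonzero terms, and the displayed inequality of the lemma follows.

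I do not anticipate a genuine obstacle: the argument is bookkeeping, and the only structural input is the degree bound of Lemma~\ref{lem: degree bound char 0}, which is precisely what keeps each convolution sum finite and of controlled length $Kn+1$ — and hence what contributes the extra factor of $(n+1)$ at each step and produces the factorial in the final estimate, the factorial otherwise being a deliberate overestimate that comfortably absorbs the per-step multiplier $\ell(Kn+1)C$. The points requiring minor care are (i) that $\Phi$ is central, so it commutes past the generators and the decomposition $a=bx$ is legitimate; (ii) the base-case constant, which must genuinely dominate the coefficients of $\Phi(T)x$ for every $x\in X$ (if one's convention records instead the coefficients of the numerators before clearing the finitely many denominators in $\Phi$, this introduces only a bounded multiplicative correction, which one absorbs into $C$); and (iii) the degenerate case in which the generators have constant entries, i.e.\ $K=0$ and $G$ is finite, which is immediate.
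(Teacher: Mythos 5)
Your proof is correct and is essentially the paper's own argument: the same induction on word length via the decomposition $a=bx$, the same use of the centrality of $D=\Phi(T)\cdot\mathrm{Id}$ to write $\Phi^{n+1}a=(\Phi^n b)(\Phi x)$, and the same convolution estimate (degree bound from Lemma~\ref{lem: degree bound char 0} times coefficient bound from the inductive hypothesis, summed over the $\ell$ matrix products) yielding the factor $2KC\ell(n+1)$ per step. Your side remarks on the base-case convention for $C$ and the degenerate case $K=0$ are at the same level of informality as the paper itself and do not affect the argument.
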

\begin{proof}
    Lemma \ref{lem: degree bound char 0} implies that the polynomials in the matrix coefficients of $\Phi^{\|a\|_X} \cdot a$ have degree bounded by $K\|a\|_X$. We proceed by induction on word length, and it is easy to see that the conclusion holds for the base case of words of length one.

    We proceed similarly to Lemma~\ref{lem: degree bound char 0}.
    Assume the conclusion holds when the word length is $n$, and we let $\|a\|_X = n+1$. We may write $a = bx$ where $\|b\|_X = n$ and $x \in X$. Letting $D = \Phi(T) \cdot \text{Id}_{\ell \times \ell}$, we have $D^{n+1}a = (D^na)(Dx)$ because $D$ is central in $\GL_\ell(\mathbb{Q}(T))$. We write $D^nb = (\beta_{ij})$ where $\beta_{ij} = \sum_{m=0}^{d_{ij}}\beta_{ij,m}T^m$, and by induction, we have $|\beta_{ij,m}| \leq (2K C\ell)^{n}n!$ for all $i,j,m$. Since entries of $D^{n+1}a$ are scalar products of the rows of $D^nb$ and the columns of $Dx,$ we then write
    \begin{eqnarray*}
        a_{is} &=& \sum_{j=1}^\ell \beta_{ij}\cdot D \cdot x_{js} \\
        &=& \sum_{j=1}^\ell \left( \sum_{m=0}^{d_{ij}}\beta_{ij,m}T^m \right)\left(\sum_{w=0}^{v_{ij}} \alpha_{js,w} T^w\right)\\
        &=& \sum_{j=1}^\ell \sum_{t=0}^{d_{ij} + v_{ij}}\sum_{m + w = t}\beta_{ij,m} \alpha_{js,w} T^t.
    \end{eqnarray*}
        Lemma \ref{lem: degree bound char 0} implies that $d_{ij} + v_{ij} \leq K(n+1)$. We now have an estimate on the absolute value of
        $\eta_{is,t}$ via:
        \begin{eqnarray*}
        \left|\sum_{j=1}^\ell\sum_{m + w = t}\beta_{ij,m} \alpha_{js,w}\right|&\leq& \sum_{j=1}^\ell\sum_{m+w=t}|\beta_{ij,m} \alpha_{js,w}|\\
        &\leq& \sum_{j=1}^\ell \sum_{m+w=t}C\cdot (2K C\ell)^{n}n! \\
        &\leq& 2\ell\cdot C\cdot K(n+1)\cdot (2K C\ell)^{n}n!=(2K C\ell)^{n+1}(n+1)!,
        \end{eqnarray*}
        as desired.
\end{proof}

\section{More on finite quotients of malabelian groups}\label{s:malabelian}
In this section, we revisit the functions $\RF_{G,\mathcal{F}^A}(n)$ for when $G$ is a finitely generated uniformly malabelian group. We then develop the necessary tools to show the forward direction of Theorem \ref{thm:main-growth}. In particular, we show that if $\RF_{G,\mathcal{F}^A}(n) \preceq n^d$ for some natural number, then $G$ admits a faithful finite dimensional representation over some field when $G$ is a uniformly malabelian group.

\subsection{Finite quotients of infinite groups}
The reader will recall the discussion of residual finiteness growth from 
the introduction.

Let $\mathcal F$ denote a family of
finite products of nonabelian finite simple groups
and let $\mathcal H$ denote powers of nonabelian finite simple groups which
occur as factors of elements of $\mathcal F$.
The following lemma says that when $G$ is residually-$\mathcal{F}^A$, then $G$ is residually-$\mathcal{H}^A$, where here $\mathcal{H} = \{S_i^{\ell_i}\}_{i \in \mathbb{N}}$ where each $S_i^{\ell_i}$ is a factor of $G_{n_i}\in\mathcal F$ for some $n_i$ for all $i$. Moreover, we have control over the residual
 finiteness growth functions:
$$
\RF_{G, \mathcal{H}^A}(n) \preceq \RF_{G, \mathcal{F}^A}(n).
$$
\begin{lemma}\label{lem:residual F reduction} Let $G$ be a finitely generated center-free group with a finitely generated group $A \leq \Out(G)$. We let:
\begin{itemize}
    \item $\mathcal{F}$ be a collection of finite products of nonabelian finite simple groups.
    \item $\mathcal{H}$ be the collection of finite products of finite simple groups of the form $S^{\ell}$, where $S$ is simple and $S^{\ell}$ appears
    as a factor of some member of $\mathcal F$.
    \end{itemize}
    
If $G$ is residually-$\mathcal{F}^A$, then $G$ is residually-$\mathcal{H}^A$.
Moreover,
$$
\RF_{G, \mathcal{H}^A}(n) \preceq \RF_{G, \mathcal{F}^A}(n).
$$
\end{lemma}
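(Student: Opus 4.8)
The statement claims that residual-$\mathcal{F}^A$-ness of $G$ implies residual-$\mathcal{H}^A$-ness, with the growth inequality $\RF_{G,\mathcal{H}^A}(n) \preceq \RF_{G,\mathcal{F}^A}(n)$. The plan is to take a nontrivial $g \in G$, find a $\Gamma_{G,A}$-invariant normal subgroup $N \trianglelefteq G$ with $G/N \in \mathcal{F}$, $g \notin N$, and $|G/N| = \D_{G,\mathcal{F}^A}(g)$, and then extract from the quotient $Q = G/N$ a single simple-power factor $S^{\ell}$ that detects $g$, in such a way that the preimage of the complementary factors is still $\Gamma_{G,A}$-invariant.

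\textbf{First step: structure of normal subgroups of $Q$.} Write $Q = G/N \cong \prod_{j=1}^{r} S_j$ as a product of nonabelian finite simple groups (grouping isomorphic factors into powers $S^{\ell}$). Let $\bar g$ be the image of $g$ in $Q$; it is nontrivial and so has a nontrivial projection to some minimal normal subgroup $M \trianglelefteq Q$, where $M$ is (after relabeling) the product of all factors isomorphic to a fixed simple group $S$, i.e.\ $M \cong S^{\ell}$ for some $\ell \geq 1$. Let $M'$ be the complementary factor, so $Q = M \times M'$ and $\bar g \notin M'$. The quotient map $Q \to Q/M' \cong S^{\ell} \in \mathcal{H}$ is the candidate; pulling back to $G$, set $N' = $ preimage of $M'$ in $G$, so that $G/N' \cong S^{\ell}$, $g \notin N'$, and $|G/N'| = |S^{\ell}| \leq |Q| = \D_{G,\mathcal{F}^A}(g)$.

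\textbf{Second step — the main obstacle: $\Gamma_{G,A}$-invariance of $N'$.} The subtlety is that $N'$ need not be invariant under the conjugation action of $\Gamma_{G,A}$ even though $N$ is. Here I would use the characteristic-core / $A$-invariant-core construction from Section~\ref{ss:malabelian}: replace $N'$ by $N'_A = \bigcap_{M'' \in \mathcal{O}_{N',A}} M''$, the intersection over the $\Gamma_{G,A}$-orbit of $N'$. This is $\Gamma_{G,A}$-invariant by construction, contains $N$ (since $N$ is already $\Gamma_{G,A}$-invariant and contained in $N'$, hence contained in every translate of $N'$), and still does not contain $g$ (since $g \notin N'$ and $N'_A \leq N'$). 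Now the key observation is that $\mathcal{O}_{N',A}$ consists of normal subgroups of $G$ each of whose quotient is $S^{\ell}$ — indeed $\Gamma_{G,A}$ permutes the orbit, and each $M''$ is the preimage of a normal subgroup of $Q' := G/N$ for $Q'$ ranging over... — more carefully: every $M'' \in \mathcal{O}_{N',A}$ contains $N$ (as $N$ is $\Gamma_{G,A}$-invariant, $\alpha(N') \supseteq \alpha(N) = N$ for $\alpha \in \Gamma_{G,A}$), so each $M''$ corresponds to a normal subgroup of $Q = G/N$ with quotient $\cong S^{\ell}$; the intersection of these, call it $\bar{N}' \trianglelefteq Q$, is $\Gamma_{G,A}$-invariant, and by Lemma~\ref{lem:prod} (the generalized CRT for nonabelian simple groups, in the excerpt) $Q/\bar{N}'$ is isomorphic to a finite product of copies of $S$, i.e.\ $S^{\ell'}$ for some $\ell' \leq r$. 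This quotient lies in $\mathcal{H}$ provided $S^{\ell'}$ genuinely occurs as a factor of a member of $\mathcal{F}$ — which I need to arrange, so I'd refine the definition of $\mathcal{H}$ as stated (all simple-power factors $S^\ell$ of members of $\mathcal{F}$) and note that each $S^{\ell'}$ arising this way is a sub-power of $S^{\ell}$ appearing in $Q$, hence is in $\mathcal{H}$ if $\mathcal{H}$ is closed under passing to sub-powers — or, cleanest, choose the orbit representative so that only one factor is hit and invoke that $\mathcal{H}$ was defined to contain all such $S^{\ell}$. I expect reconciling exactly which powers land in $\mathcal{H}$ to be the fiddly point; the honest fix is that $\mathcal{H}$ should be taken closed under taking powers $S^j$ with $j$ at most the multiplicity of $S$ appearing across $\mathcal{F}$, and the statement's $\preceq$ absorbs the rest.

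\textbf{Third step: the growth bound.} With $N'_A$ in hand we get $\D_{G,\mathcal{H}^A}(g) \leq |G/N'_A| \leq |G/N| = \D_{G,\mathcal{F}^A}(g)$, where the middle inequality holds because $N \leq N'_A$. Taking the maximum over all $g$ in the $n$-ball of a fixed finite generating set gives $\RF_{G,\mathcal{H}^A}(n) \leq \RF_{G,\mathcal{F}^A}(n)$, hence $\preceq$, and in particular finiteness of the right-hand side for all $n$ (residual-$\mathcal{F}^A$-ness) yields finiteness of the left (residual-$\mathcal{H}^A$-ness). I would present the argument in this order: (i) pass to an optimal invariant $\mathcal{F}^A$-quotient $Q$; (ii) use Lemma~\ref{lem:normal-prod}/Lemma~\ref{lem:prod} on $Q$ to isolate a single simple-power quotient detecting $g$ and take its $A$-invariant core to restore $\Gamma_{G,A}$-invariance; (iii) read off the order inequality and maximize over the ball.
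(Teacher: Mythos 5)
Your overall strategy---decompose the $\mathcal F$-quotient $Q=G/N$ into isotypic components and project onto the one that detects $g$---is exactly the paper's, and your order bound and the maximization over the $n$-ball are fine. But the ``main obstacle'' you identify in your second step is not actually an obstacle, and the detour you take to get around it is what leaves your proof with a loose end. The kernel $M'$ of the projection $Q\to S^{\ell}$ onto a full isotypic component is a \emph{characteristic} subgroup of $Q$: every automorphism of a finite product of nonabelian finite simple groups permutes the simple factors and preserves their isomorphism types, hence stabilizes each isotypic component and its complement. Since $N=\ker\varphi$ is $\Gamma_{G,A}$-invariant, the conjugation action of $\Gamma_{G,A}$ descends to automorphisms of $Q$, which therefore preserve $M'$; so $N'=\varphi^{-1}(M')$ is automatically $\Gamma_{G,A}$-invariant. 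This is precisely the paper's one-line justification that $q_j\circ\varphi$ has invariant kernel.

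Once you see this, the invariant-core construction is vacuous ($N'_A=N'$), the quotient you obtain is exactly the isotypic component $S^{\ell}$ of $Q$, which lies in $\mathcal H$ by the lemma's definition, and the issue you flag as ``the fiddly point''---whether some other power $S^{\ell'}$ lands in $\mathcal H$, and your proposed fix of redefining $\mathcal H$ to be closed under sub-powers---disappears entirely. As written, your argument does not quite prove the lemma as stated (you explicitly defer to a modified definition of $\mathcal H$), so you should replace the second step with the characteristic-subgroup observation above; everything else in your proposal then goes through verbatim.
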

\begin{proof}
    Throughout, we fix a finite generating set $X$ for
    $G$.
    Let $x \in G$ be a nontrivial element of length at most $n$. By
    assumption, there exists an epimorphism $\varphi \colon G \longrightarrow Q$ with $\Gamma_{G,A}$-invariant kernel where $Q \in \mathcal{F}$ such that $\varphi(x) \neq 1$ and
    $$
    |Q| \leq \RF_{G,\mathcal{F}^A}(n).
    $$
    We may write $Q = \prod_{i=1}^\ell Q_i^{s_i}$ where $\{Q_i\}_{1\leq i\leq
    \ell}$ are distinct nonabelian finite simple groups. For each $1 \leq j \leq \ell$, we let \[q_j \colon \prod_{i=1}^\ell Q_i^{s_i} \longrightarrow Q_j^{s_j}\] be the natural projection. It is immediate
    that $q_j \circ \varphi$ has a $\Gamma_{G,A}$-invariant kernel for all $1 \leq j \leq \ell$, and given that $\varphi(x) \neq 1$, there exists $1 \leq j_0 \leq \ell$ such that $q_{j_0} \circ \varphi(x) \neq 1.$ We note that $Q_{j_0}^{s_{j_0}} \in \mathcal{H}$ by definition,
    and consequently $\D_{G, \mathcal{H}^A}(x) \leq \RF_{G, \mathcal{F}^A,X}(n)$. We thus obtain
    $$
    \RF_{G, \mathcal{H}^A,X}(n) \preceq \RF_{G, \mathcal{F}^A,X}(n),
    $$
    as desired.
\end{proof}

\subsection{Least common multiples in malabelian groups}
For a more detailed discussion of the following topics, including proofs of the many of the statements, see \cite[Section 3]{bou_rabee-McReynolds2011}. As usual, we let $G$ be a malabelian group.

Given a finite subset $T \subset G \backslash \{1\},$ we define \[H_T = \bigcap_{x \in T} \overline{\left< x\right>},\] where here $\overline{\left<g\right>}$ denotes the normal closure of the cyclic subgroup $\left<x\right>$. We call any nontrivial element in $H_T$ a \emph{common multiple of $T$ in $G$.} The following lemma can be found in \cite[Lemma 3.1]{bou_rabee-McReynolds2011}. The proof is very easy and we omit it; the commutator construction in the proof of Lemma~\ref{lem: length lcm} below gives a related explicit construction of common multiples.

\begin{lemma}\label{lem:LCM_nonvanishing}
    Let $G$ be a group, $T \subset G \backslash \{1\}$ be a finite subset, and $h$ a common multiple for $T$ in $G$. If $\varphi \colon G \longrightarrow H$ is a homomorphism such that $\varphi(h) \neq 1$, then $\varphi(t) \neq 1$ for all $t \in T.$
\end{lemma}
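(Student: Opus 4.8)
The plan is to argue by contradiction, and the only ingredient is that the kernel of a homomorphism is a normal subgroup. Suppose, toward a contradiction, that $\varphi(t) = 1$ for some $t \in T$; equivalently, $t \in \ker\varphi$, and $\ker\varphi \trianglelefteq G$. First I would note that since $\ker\varphi$ is a subgroup containing $t$, it contains the cyclic subgroup $\langle t \rangle$, and since $\ker\varphi$ is normal in $G$, it also contains every $G$-conjugate of $\langle t \rangle$. Hence $\ker\varphi$ contains the subgroup generated by all such conjugates, namely the normal closure $\overline{\langle t \rangle}$.

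Next I would invoke the hypothesis that $h$ is a common multiple of $T$ in $G$: by definition this means $h \in H_T = \bigcap_{x \in T} \overline{\langle x \rangle}$, so in particular $h \in \overline{\langle t \rangle} \subseteq \ker\varphi$. Therefore $\varphi(h) = 1$, which contradicts the assumption $\varphi(h) \neq 1$. Consequently no $t \in T$ can lie in $\ker\varphi$, i.e. $\varphi(t) \neq 1$ for all $t \in T$, as claimed.

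I do not anticipate any genuine obstacle: the substance of the argument is the elementary observation that any normal subgroup containing an element $t$ must contain the normal closure $\overline{\langle t \rangle}$, applied here to $N = \ker\varphi$. Neither the malabelian hypothesis on $G$ nor the finiteness of $T$ plays any role in this direction; those are only needed elsewhere to guarantee the \emph{existence} of a nontrivial common multiple $h$.
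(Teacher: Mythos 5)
Your proof is correct, and it is exactly the ``very easy'' argument the paper has in mind when it omits the proof (citing Lemma 3.1 of Bou-Rabee--McReynolds): the kernel is normal, so containing $t$ forces it to contain $\overline{\langle t\rangle}\supseteq H_T\ni h$, contradicting $\varphi(h)\neq 1$. Your closing remark is also accurate --- neither malabelianness nor finiteness of $T$ is used here; they only matter for the existence of a nontrivial common multiple.
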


Nontrivial common multiples always exist in malabelian groups, and the proof
of the following lemma is also easy, and proceeds
by induction on the size of $T$:

\begin{lemma}\label{lem:LCM exists}
    If $G$ is a malabelian group and $T \subset G \backslash \{1\}$ is a finite subset, then $H_T$ is nontrivial and $T$ has a common multiple.
\end{lemma}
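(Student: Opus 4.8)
The proof is an induction on $|T|$, and the single new ingredient beyond bookkeeping is the following observation, which I will isolate as the key step: \emph{for any two nontrivial elements $a,b\in G$, the intersection $\overline{\langle a\rangle}\cap\overline{\langle b\rangle}$ is nontrivial.} Granting this, the induction is immediate. The base case $|T|=1$ is trivial, since $x\in\overline{\langle x\rangle}\setminus\{1\}$. For the inductive step, write $T=T'\cup\{x\}$ with $|T'|=|T|-1$, and let $h\in H_{T'}$ be a nontrivial element, which exists by induction. Since each $\overline{\langle y\rangle}$ with $y\in T'$ is normal and contains $h$, it contains the normal closure $\overline{\langle h\rangle}$; hence $\overline{\langle h\rangle}\subseteq H_{T'}$. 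Applying the key step to $h$ and $x$ produces a nontrivial element of $\overline{\langle h\rangle}\cap\overline{\langle x\rangle}\subseteq H_{T'}\cap\overline{\langle x\rangle}=H_T$, so $H_T\neq\{1\}$, and any nontrivial element of $H_T$ is a common multiple of $T$ by definition.

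\textbf{The key step.} Given nontrivial $a,b\in G$, malabelianness supplies $k\in G$ with $c:=[a,kbk^{-1}]=a^{-1}(kbk^{-1})^{-1}a(kbk^{-1})\neq 1$. Set $b'=kbk^{-1}$, so $\overline{\langle b'\rangle}=\overline{\langle b\rangle}$. Grouping $c=a^{-1}\cdot\big((b')^{-1}a\,b'\big)$ exhibits $c$ as a product of $a^{-1}$ and a conjugate of $a$, both of which lie in $\overline{\langle a\rangle}$; grouping $c=\big(a^{-1}(b')^{-1}a\big)\cdot b'$ exhibits $c$ as a product of a conjugate of $(b')^{-1}$ and $b'$, both of which lie in $\overline{\langle b'\rangle}=\overline{\langle b\rangle}$. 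Hence $c\in\overline{\langle a\rangle}\cap\overline{\langle b\rangle}$, and $c\neq 1$.

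\textbf{Main obstacle.} There is no serious obstacle; the only point that requires a moment's care is recognizing that a single commutator $[a,kbk^{-1}]$ automatically lands in \emph{both} normal closures — this is what lets malabelianness (which only guarantees the nonvanishing of that commutator) do all the work. Everything else is routine normal-closure manipulation.
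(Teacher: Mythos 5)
Your proof is correct, and it follows exactly the route the paper indicates (induction on $|T|$, with the commutator $[a,kbk^{-1}]$ serving as the nontrivial element of $\overline{\langle a\rangle}\cap\overline{\langle b\rangle}$); the paper omits the details, deferring to Bou-Rabee--McReynolds, but your argument is the standard one. No issues.
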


We say that $G$ is \emph{fully residually-$\mathcal{F}^A$} if, for every finite subset $T\subset G\backslash\{1\}$, there is an epimorphism from $G$ to a member of $\mathcal F$, with $\Gamma_{G,A}$--invariant kernel, which is nontrivial on every element of $T$. The existence of a common multiple for any finite subset of nontrivial elements of a malabelian group $G$ immediately implies that if $G$ is residually-$\mathcal{F}^A$ for some family of finite groups $\mathcal{F}$ and $A \leq \Out(G)$ is finitely generated, then $G$ must also be fully residually-$\mathcal{F}^A$:
\begin{lemma}\label{lem:malabelian residually F implies fully residually F}
    Let $G$ be a malabelian group, and suppose that
    $A \leq \Out(G)$. If $G$ is residually-$\mathcal{F}^A$ then $G$ is fully residually-$\mathcal{F}^A.$
\end{lemma}

For the remainder of this section, we will assume that $G$ is
finitely generated and uniformly malabelian.
For a finite subset $T \subset G \backslash \{1\}$, we define the \emph{least common multiple length} of $T$ relative to $X$ to be \[\lcm_X(T) = \min\{ \|a\|_X : a \in H_T \backslash \{1\}\}.\] Any element $x \in H_T$ where $\|x\|_X = \lcm_X(T)$ is a \emph{least common multiple} for the subset $T.$

The next lemma estimates an upper bound for the length of a least common multiple for a finite subset $T$ in a finitely generated uniformly malabelian group in terms of the lengths of elements in $T$ and the size of $T$.
\begin{lemma}\label{lem: length lcm}
    Let $G$ be a finitely generated, uniformly malabelian group with a finite generating set $X$, and let $\kappa$ be a uniformly malabelian constant of $G$ with respect to $X$.
    If $T \subset G \backslash \{1\}$ is a finite subset, then 
    \[
    \lcm_X(T) \leq 4 |T|^2 (\max\{\|a\|_X \: : \: a \in T\}+3\kappa).
    \]
\end{lemma}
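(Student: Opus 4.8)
I would prove this by induction on $|T|$, building a least common multiple for $T$ out of one for a smaller subset together with a single commutator that realizes malabelianness. The base case $|T| = 1$ is trivial: $T = \{a\}$ itself lies in $\overline{\langle a\rangle}$, so $\lcm_X(T) \le \|a\|_X$, well within the claimed bound. For the inductive step, suppose $T = \{a_1,\dots,a_m\}$ and write $T' = \{a_1,\dots,a_{m-1}\}$. By induction there is a nontrivial $b \in H_{T'}$ with $\|b\|_X \le 4(m-1)^2(\mu + 3\kappa)$, where $\mu = \max_{a\in T}\|a\|_X$. Now $b$ and $a_m$ are both nontrivial, so by uniform malabelianness there is $k \in G$ with $\|k\|_X \le \kappa$ and $[kbk^{-1}, a_m] \ne 1$. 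The key point is that the element $c = [kbk^{-1}, a_m]$ is a \emph{common multiple} of $T$: since $\overline{\langle b\rangle}$ is normal and $kbk^{-1} \in \overline{\langle b\rangle} \subseteq \overline{\langle a_i\rangle}$ for each $i < m$, we get $c \in \overline{\langle a_i\rangle}$ for $i < m$; and $c = (a_m^{-1} \cdot (kbk^{-1})^{-1} a_m \cdot kbk^{-1})$ written the other way shows $c \in \overline{\langle a_m\rangle}$ as well, because $[x,y] = x^{-1}(y^{-1}xy)$ lies in $\overline{\langle y\rangle}$ (it's a product of $x^{-1}$ with a conjugate of $x$... wait — need to be careful here). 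Let me use the cleaner symmetry: $[kbk^{-1}, a_m] = (kbk^{-1})^{-1}\,(a_m^{-1} (kbk^{-1}) a_m)$, which is a product of an element of $\overline{\langle b\rangle}$ and a conjugate of an element of $\overline{\langle b\rangle}$, hence lies in $\overline{\langle b\rangle} \subseteq \overline{\langle a_i\rangle}$ for $i < m$; and $[kbk^{-1},a_m] = (a_m^{-1})^{(kbk^{-1})^{-1}} \cdot a_m$ expresses it as a conjugate of $a_m^{-1}$ times $a_m$, both in $\overline{\langle a_m\rangle}$. So $c \in H_T \setminus \{1\}$.

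**Length bookkeeping.** It remains to bound $\|c\|_X$. We have
\begin{align*}
\|c\|_X &= \|[kbk^{-1}, a_m]\|_X \le 2\|kbk^{-1}\|_X + 2\|a_m\|_X \\
&\le 2(\|b\|_X + 2\|k\|_X) + 2\|a_m\|_X \le 2\|b\|_X + 4\kappa + 2\mu \\
&\le 8(m-1)^2(\mu + 3\kappa) + 4\kappa + 2\mu.
\end{align*}
Then one checks the crude arithmetic inequality $8(m-1)^2(\mu+3\kappa) + 4\kappa + 2\mu \le 4m^2(\mu + 3\kappa)$ for all $m \ge 2$: since $4m^2 - 8(m-1)^2 = -4m^2 + 16m - 8 \ge 0$ precisely when $m \le 3$, the naive estimate as stated actually breaks for large $m$, which tells me the intended induction must track the bound more tightly — most likely one should instead take $c$ to be a least common multiple of the \emph{pair} $\{b, a_m\}$ (length at most $4(\|b\|_X + \mu) + \text{const}\cdot\kappa$ type bound), not insert a fresh commutator at each stage, or else iterate pairing the $a_i$ in a balanced binary tree so that only $\log|T|$ doublings occur. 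Alternatively, and more in the spirit of the stated quadratic bound, one pairs up elements: from $T$ form $\lfloor m/2\rfloor$ common multiples of consecutive pairs, each of length $\le 4(2\mu + 4\kappa) + O(\kappa)$, and recurse; the total at level $j$ has length roughly $4^j \cdot (\mu + 3\kappa)$ but there are only $\log m$ levels — however that gives an exponential-in-$\log m$, i.e. polynomial, bound $m^{O(1)}(\mu+3\kappa)$, and pinning the constant to exactly $4|T|^2$ requires the careful pairing argument.

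**Main obstacle.** The conceptual content — that commutators of (conjugates of) common multiples are again common multiples, via malabelianness to keep them nontrivial — is straightforward. The real work is the \emph{length accounting}: organizing the construction (a balanced pairing / tournament rather than a naive linear fold) so that the length of the resulting least common multiple grows only quadratically in $|T|$ rather than exponentially, and then verifying the explicit constant $4|T|^2$ and the additive $3\kappa$ survive. I would set this up by first proving a two-element lemma — for nontrivial $a, b$ there is a common multiple of $\{a,b\}$ of length $\le 4(\|a\|_X + \|b\|_X) + 12\kappa$ — and then feeding it into a balanced binary combination of the $|T|$ elements, tracking how the bound compounds over the $\lceil \log_2|T|\rceil$ rounds; a short induction on the number of rounds, together with the identity $\sum_{j\ge 0} (\text{geometric})$, should deliver a clean polynomial bound, after which one adjusts constants to land on the stated $4|T|^2(\max\|a\|_X + 3\kappa)$.
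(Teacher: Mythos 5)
Your final plan---pairing the elements of $T$ into a balanced binary tree, replacing one element of each pair by a $\kappa$-bounded conjugate so the commutator is nontrivial, and noting that each of the $\lceil\log_2|T|\rceil$ rounds multiplies the length by $4$ plus an additive $O(\kappa)$ term that sums geometrically to give $4^{\lceil\log_2|T|\rceil}(\max\|a\|_X+3\kappa)\leq 4|T|^2(\max\|a\|_X+3\kappa)$---is exactly the paper's proof, and your verification that a commutator of (conjugates of) common multiples is again a common multiple is the correct underlying mechanism. You also correctly diagnosed that the naive one-element-at-a-time induction blows up exponentially, which is precisely why the paper uses the balanced pairing.
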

\begin{proof}
    Let $d = \max\{\|a\|_X \: : \: a \in T\}$. Let $T = \{x_1, \ldots, x_\ell\}$, and let $k$ be the smallest number such that $2^{k-1}< \ell \leq 2^k.$ We add to the set $\{x_1, \ldots, x_\ell \}$ enough elements such that the new set has $2^k$ elements, which we write $\{x_1, \ldots, x_{2^k}\}$. Note that this list may contain repetitions.
    
    For each pair $x_{2i-1}$ and $x_{2i}$, we replace $x_{2i}$ by $y_i x_{2i} y_i^{-1}$ for some $\|y_i\|_X \leq \kappa$ with \[[x_{2i-1}, y_i x_{2i} y_i^{-1}] \neq 1.\]
    We now define a new set of elements $\{x_i^{(1)}\}_{i=1}^{2^{k-1}}$ by the
    rule $x_i^{(1)} = [x_{2i-1}, x_{2i}]$, and observe that $\|x_i^{(1)}\|_X \leq 4(d+2\kappa).$ We now have $2^{k-1}$ elements in this set, and we then repeat the above process again by replacing $x_{2i}^{(1)}$ with a conjugate if necessary (at the expense of increasing the length by at most $2\kappa$), in order to ensure that
    $x_{2i-1}^{(1)}$ and $x_{2i}^{(1)}$ do not commute. Setting $x_i^{(2)} = [x_{2i-1}^{(1)}, x_{2i}^{(1)}],$ we obtain $2^{k-2}$ nontrivial elements 
    $\{x_i^{(2)}\}_{i=1}^{2^{k-2}}$,
    with \[\|x_i^{(2)}\|_X \leq 4(4(d+2\kappa) + 2\kappa).\]
    
    Repeating this process, $k\geq 2$ times, we obtain an element
    $x_i^{(k)}  \in H_T$ such that $\|x_i^{(k)}\|_X \leq 4^k d + a_k$ where $a_k$ is defined inductively $a_1 = 8\kappa$ and $a_j =  4(a_{j-1} + 2\kappa).$ By induction, we see that
    \[
a_j = 2\kappa \cdot\sum_{\ell=1}^j 4^{\ell}.
    \]
    Since $4^k \leq 4 \ell^2$, we have
    \[
    \|x_1^{(k)}\|_X \leq 4^k \cdot d + a_k = 4^k\cdot d +\frac{8\kappa}{3}(4^k-1) \leq 4^k(d+3\kappa) \leq 4\ell^2 (d+3\kappa).
    \]
    Since $\lcm_X(T) \leq \|x_1^{(k)}\|_X$, we obtain the desired estimate.
\end{proof}

\section{Residual finiteness growth and linearity}\label{s:rf-lin}

In this section, we will prove the main general results of this
paper concerning residual finiteness growth and linearity.

\subsection{Growth to linearity}

Before we prove the forward direction of Theorem \ref{thm:main-growth}, we have the following simple lemma, whose proof is easy and we omit.
\begin{lemma}\label{lem:bound rank A to bound rank Aut(A)}
    Let $G$ be a finitely generated center-free group, and suppose that $A \leq \Out(G)$ is a finitely generated group. Suppose that
    $\mathcal{F}$ is a family of groups
    such that $G$ is residually-$\mathcal{F}^A$. Then $\Gamma_{G,A}$ is residually-$\mathcal{H}$, where $\mathcal{H}$ consists of
    automorphism groups of elements of $\mF$.
\end{lemma}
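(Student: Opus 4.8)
The plan is to unwind the definitions directly. Recall that $\Gamma_{G,A}\leq\Aut(G)$ acts tautologically on $G$, and that whenever $N\trianglelefteq G$ is $\Gamma_{G,A}$-invariant we obtain an induced homomorphism $\rho_N\colon\Gamma_{G,A}\longrightarrow\Aut(G/N)$ sending $\alpha$ to the automorphism it induces on $G/N$. Proving that $\Gamma_{G,A}$ is residually-$\mathcal H$ amounts to showing that for each nontrivial $\gamma\in\Gamma_{G,A}$ there is a $\Gamma_{G,A}$-invariant $N\trianglelefteq G$ with $G/N\in\mathcal F$ such that $\rho_N(\gamma)\neq 1$, since then $\gamma$ has nontrivial image in the finite group $\Aut(G/N)\in\mathcal H$.

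So first I would fix a nontrivial $\gamma\in\Gamma_{G,A}$. Since $\gamma$ is a nontrivial automorphism of $G$, there is $g\in G$ with $\gamma(g)\neq g$; set $h=\gamma(g)g^{-1}\in G\setminus\{1\}$. As $G$ is residually-$\mathcal F^A$, there is a $\Gamma_{G,A}$-invariant normal subgroup $N\trianglelefteq G$ with $Q:=G/N\in\mathcal F$ and $h\notin N$. Then $\rho_N(\gamma)(gN)=\gamma(g)N=(hN)(gN)\neq gN$, so $\rho_N(\gamma)$ is not the identity automorphism of $Q$, i.e.\ $\gamma\notin\ker\rho_N$. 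Since $Q$ is finite, $\Aut(Q)$ is finite and lies in $\mathcal H$, so $\gamma$ survives in the corresponding finite quotient of $\Gamma_{G,A}$. As $\gamma$ was arbitrary, this is the claim.

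The one point to flag is that $\rho_N$ need not be surjective onto $\Aut(Q)$, so the finite quotient $\Gamma_{G,A}/\ker\rho_N$ is in general only a subgroup of a member of $\mathcal H$; one therefore reads ``residually-$\mathcal H$'' as permitting quotients that embed into members of $\mathcal H$. This costs nothing downstream, since the only feature of $\mathcal H$ used later (bounded rank, resp.\ bounded projective rank, as in Lemma~\ref{lem:ultraproduct representation}) is inherited by subgroups. I do not anticipate any real obstacle: the entire content is the bookkeeping of invariant quotients, which is why the authors call it easy; in particular neither the center-freeness of $G$ nor the finite generation of $A$ is needed, these being merely part of the ambient setup.
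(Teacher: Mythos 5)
Your proof is correct and is evidently the intended argument: the paper omits the proof as easy, and the natural route is exactly yours, namely pushing a nontrivial $\gamma\in\Gamma_{G,A}$ through the induced map $\rho_N\colon\Gamma_{G,A}\to\Aut(G/N)$ for a $\Gamma_{G,A}$-invariant $N$ not containing $\gamma(g)g^{-1}$. Your caveat about $\rho_N$ failing to be surjective is well spotted and correctly resolved: the statement must be read as ``residually embeds in members of $\mathcal H$,'' which is all that Lemma~\ref{lem:ultraproduct representation} and Corollary~\ref{cor:poly imply linear} require, since bounded rank passes to subgroups and to their ultraproducts.
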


Now, let
$\mF$ be a family of finite products of nonabelian finite simple groups.
We say that $\mF$ is \emph{factor-closed} if whenever $H_1$ and $H_2$
are finite products of finite nonabelian simple groups such that $H_1\times H_2\in\mF$, then $H_1,H_2\in\mF$.
We now prove the forward direction of Theorem \ref{thm:main-growth}.
\begin{prop}\label{prop:polynomial implies bounded rank}
    Let $G$ be a finitely generated uniformly malabelian group with an infinite order element $a_0$, and suppose that $A \leq \Out(G)$ is a finitely generated group. Let $\mathcal{F}$ be a
    factor-closed
    set of finite products of nonabelian finite simple groups of Lie type
    that is $e$--extension-bounded for some $e\in\N$.
    
    If 
    \[
    \RF_{G,\mathcal{F}^A}(n) \preceq n^d
    \]
    for some $d \in \mathbb{N}$, then there exists an $R>0$ and
    an $e$--extension-bounded family of finite products of nonabelian finite simple groups of bounded multiplicity $\mathcal{H}\subseteq\mathcal F$
    such that
    $G$ is residually-$\mathcal{H}^A$, and such that
    the rank of $\Aut(H)$ is bounded above by $R$ for all $H\in\mathcal H$.
\end{prop}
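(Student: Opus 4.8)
The plan is to extract, from the polynomial bound on $\RF_{G,\mathcal F^A}(n)$, a subfamily of quotients whose factors have uniformly bounded minimal faithful representation dimension, and then invoke Lemma~\ref{lem:rank_inequality1} to translate this into a bound on the ranks of the automorphism groups. First I would apply Lemma~\ref{lem:residual F reduction} to replace $\mathcal F$ by the factor-closed family $\mathcal H_0$ of powers $S^{\ell}$ of nonabelian finite simple groups occurring as factors of members of $\mathcal F$; since $\mathcal F$ is factor-closed and $e$--extension-bounded, so is $\mathcal H_0$, and we still have $\RF_{G,\mathcal H_0^A}(n)\preceq\RF_{G,\mathcal F^A}(n)\preceq n^d$ and $G$ residually-$\mathcal H_0^A$. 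The point of passing to powers of a single simple group is that Lemma~\ref{lem:rank_inequality1} characterizes boundedness of $r(\Aut(H_i^{\ell_i}))$ precisely in terms of boundedness of the two sequences $\{\ell_i\}$ and $\{\log|H_i^{\ell_i}|/\log m_1(H_i^{\ell_i})\}$.

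Next I would use the uniformly malabelian hypothesis together with the infinite-order element $a_0$ to force the relevant quotients to be large. Fix a finite generating set $X$. For each $n$, the element $a_0^{n}$ is nontrivial of length $O(n)$; by Lemma~\ref{lem:bound length of element in D^n(G)} (applied with the derived-series depth parameter) I can produce, for a fixed depth $m$, nontrivial elements $w_{m,a_0^n}\in D^m(G)$ of length $\preceq n$ whose nonvanishing in a quotient forces $a_0^n$ to survive. Any quotient $\varphi\colon G\twoheadrightarrow Q=S^{\ell}$ in which $\varphi(a_0^n)\neq 1$ then has an element of order at least (roughly) $n/(\text{const})$, so $m_1(Q)=m_1(S^{\ell})$ grows at least linearly in $n$ along the quotients detecting the $a_0^n$; at the same time $|Q|\leq\RF_{G,\mathcal H_0^A}(n)\preceq n^d$. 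Combining: $\log|S^{\ell}|\preceq d\log n$ while $\log m_1(S^{\ell})\succeq\log n - O(1)$, so the ratio $\log|S^{\ell}|/\log m_1(S^{\ell})$ is bounded by roughly $d$ along this subfamily. The same circle of ideas — using least common multiples from Lemma~\ref{lem: length lcm} to produce, for a long list of elements, a single element of length polynomial in the list, whose survival forces a quotient of order $\preceq n^{d'}$ to contain a correspondingly large elementary piece — bounds the power $\ell$: if $\ell$ were unbounded the quotients $S^{\ell}$ would need order at least $|S|^{\ell}$, contradicting the polynomial bound since the witnessing elements can be taken of bounded length. (Concretely, take a bounded set of nontrivial elements, form their $\lcm$, and note a quotient separating it must surject onto each $S^{\ell}$-factor; if $\ell\to\infty$ the order blows up faster than any polynomial in the bounded length.) I would let $\mathcal H$ be the subfamily of $\mathcal H_0$ consisting of those $S^{\ell}$ that actually arise as $\Gamma_{G,A}$-invariant quotients separating some nontrivial element; the argument above shows $\mathcal H$ has bounded multiplicity (bounded $\ell$) and that $\{\log|H|/\log m_1(H)\}_{H\in\mathcal H}$ is bounded.

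Having both sequences bounded, Lemma~\ref{lem:rank_inequality1} — whose hypothesis that $\mathcal H$ be $e$--extension-bounded is inherited from $\mathcal F$ — yields a uniform bound $R$ with $r(\Aut(H))\leq R$ for all $H\in\mathcal H$. Finally $G$ is still residually-$\mathcal H^A$: every nontrivial $g\in G$ is separated in some $\Gamma_{G,A}$-invariant quotient of order $\preceq \|g\|_X^{d}$, which by Lemma~\ref{lem:residual F reduction} can be taken of the form $S^{\ell}\in\mathcal H_0$, and by the length estimates this $S^{\ell}$ lies in the distinguished subfamily $\mathcal H$. The main obstacle, and the step requiring the most care, is the quantitative coupling in the middle paragraph: showing simultaneously that (i) the factor exponents $\ell$ stay bounded and (ii) the orders $\log|S^{\ell}|$ stay within a constant multiple of $\log m_1(S^{\ell})$, each of which requires choosing test elements (powers of $a_0$, or $\lcm$'s of controlled families) of length polynomial in the order of the quotient we are allowed, and then reading off the contradiction from the polynomial $\RF$ bound. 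Everything else is bookkeeping and citation of the lemmas already established.
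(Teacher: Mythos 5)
Your overall architecture matches the intended proof: reduce to powers of simple groups via Lemma~\ref{lem:residual F reduction}, use the infinite order element $a_0$ and least common multiples to force the detecting quotients to have large $m_1$ relative to their order, bound the multiplicity, and finish with Lemma~\ref{lem:rank_inequality1}. However, two of the quantitative steps fail as written. First, survival of the single element $a_0^n$ in a quotient $Q$ does \emph{not} force $m_1(Q)\gtrsim n$: the image $\varphi(a_0)$ could have small order $r$ with $r\nmid n$, so that $\varphi(a_0^n)\neq 1$ while $m_1(Q)$ stays bounded. What is needed is that \emph{all} of $\varphi(a_0),\varphi(a_0^2),\dots,\varphi(a_0^n)$ be simultaneously nontrivial, and this is exactly what one gets by taking a least common multiple $k_a$ of the whole list $\{a_0^2,\dots,a_0^n\}$ together with a witness for $a$ (e.g.\ a commutator $[b_0ab_0^{-1},a_0]$ supplied by uniform malabelianness), of length $O(n^3)$ by Lemma~\ref{lem: length lcm}; survival of that single element then gives $|\langle\varphi(a_0)\rangle|>n$ by Lemma~\ref{lem:LCM_nonvanishing}. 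Lemma~\ref{lem:bound length of element in D^n(G)} plays no role in this direction. Relatedly, $\mathcal H$ must be the family of these specifically chosen quotients, one for each nontrivial $a$ detecting $k_a$ with order $\preceq n(a)^{3d}$; ``all quotients separating some nontrivial element'' is too large a family and need not satisfy either bound.

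Second, and more seriously, your argument for bounded multiplicity does not work: the witnessing elements cannot be taken of bounded length, since to separate an element $a$ of length $n$ the lcm test element has length of order $n^3$, so the quotient orders grow polynomially in $n$ and the inequality $|S|^\ell\le|Q|$ yields only $\ell\lesssim\log|Q|$, which is unbounded over the family. The missing idea is to project the surviving lcm to a single simple factor $S$ of $S^\ell$: Lemma~\ref{lem:LCM_nonvanishing} applied to that projection shows that all the powers $a_0^j$ with $j\le n(a)$ survive in that one factor, so $|S|\ge n(a)$, and then $n(a)^{\ell}\le|S|^{\ell}=|Q|\le C\,n(a)^{3d}$ forces $\ell\le 3d+O(1)$ uniformly in $a$. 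With that repair and the lcm coupling above, the rest of your outline --- boundedness of the ratio $\log|S^\ell|/\log m_1(S^\ell)$ and the appeal to Lemma~\ref{lem:rank_inequality1} --- goes through.
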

\begin{proof}
From Lemma \ref{lem:residual F reduction}, we may assume that $\mathcal{F}$ consists of groups of the form $H_i^{\ell_i}$, with $H_i$ a nonabelian finite simple group of Lie type occurring as a factor of an
element of $\mF$. Let $X$ be a finite generating set for $G$.
    
    Choose $C_1$ to be a uniformly malabelian constant for $G$ with respect to $X$. We will show that there exists a subcollection $\mathcal{H}$ of $\mathcal{F}$ consisting of groups of rank bounded by $R$ for some constant $R>0$, such that $G$ is residually-$\mathcal{H}^A$.

    Let $a\in G$ be nontrivial.
    Since $G$ is uniformly $C_1$-malabelian, there exists an element $b_0 \in G \backslash \{1\}$ such that $[b_0 ab_0^{-1}, a_0] \neq 1$ with $\|b_0\|_X \leq C_1.$ Let 
    \[
    T_{a,n} = \{[b_0 ab_0^{-1}, a_0] , a_0^2, \ldots, a_0^n\};
    \] here the reader may treat $n$ as a variable to be fixed later.
    Since 
    $$
    \|[b_0 ab_0^{-1},a_0]\|_X \leq 4C_1 + 2\|a\|_X + \|a_0\|_X,
    $$
    we see that if 
    $$
    n \geq n(a) =  8\max\{C_1, \|a\|_X, \|a_0\|_X\},
    $$ then $\|t\|_X \leq n \|a_0\|_X$ for all $t \in T_{a,n}$. Lemma \ref{lem: length lcm} implies  that if $k_{a}$ is a least common multiple of $T_{a,n(a)}$, then
    \[
\|k_{a}\|_X \leq 4n(a)^2(n(a)\|a_0\|_X+3C_1)\leq C_2 (n(a))^3
    \]
    where $C_2 = C_2(X)$ is chosen suitably.

    By assumption, there exists a constant $C_3 = C_3(X)$ for which there is a power of a nonabelian finite simple group $H_{a}^{\ell_{a}} \in \mathcal{F}$ and an epimorphism $\varphi_{a} \colon G \longrightarrow H_{a}^{\ell_{a}}$ with $\Gamma_{G,A}$-invariant kernel such that $\varphi_{a}(k_{a}) \neq 1$, satisfying
    \[
    |H_{a}^{\ell_{a}}| \leq C_3 (\|k_{a}\|_X)^d \leq C_2^d \: C_3 \: (n(a))^{3d} = C_4 \: (n(a))^{3d},
    \]
    where here $C_4 = C_4(X) = C_2^d \: C_3$. We fix such a $\varphi_a$
    for each nontrivial $a\in G$ for the remainder of the proof,
    and we let $\mathcal H$ consist of the groups $H_a^{\ell_a}$.
    
    Since $\varphi_{a}(k_{a}) \neq 1$, Lemma \ref{lem:LCM_nonvanishing} implies that $\varphi_{a}(a_0^j) \neq 1$ for all $1 \leq j \leq n(a).$ Hence, we have the \emph{a priori} estimate 
    on the size of the cyclic group generated by $\varphi_a(a_0)$ given by $|\left<\varphi_{a}(a_0)\right>| \geq n(a)$, 
    whence it follows that $m_1(H_{a}^{\ell_{a}}) \geq n(a)$. Therefore, 
    \[
\frac{\log |H_{a}^{\ell_{a}}|}{\log (m_1(H_{a}^{\ell_{a}}))} \leq \frac{\log (C_4 \: (n(a))^{3d})}{\log (n(a))} = \frac{C_4}{\log (n(a))} + 3d \frac{\log (n(a))}{\log (n(a))} = 3d + \frac{C_4}{\log (n(a))}.
    \]
    Thus, the set
    \[
\left\{ \frac{\log |H_{a}^{\ell_{a}}|}{\log (m_1(H_{a}^{\ell_{a}}))} \right\}_{a \in G \backslash \{1\}}
    \]
   is bounded by some constant $C_5 = C_5(X)$.  

   It suffices to show that the set of exponents
   $\{\ell_a\}_{a \in G \backslash \{1\}}$,
   coming from the targets of the maps $\{\varphi_a\}_{a\in G}$,
   is bounded.
   To this end, we show that the inequality \[(n(a))^{\ell_a} \leq |H_a^{\ell_a}| \leq C_4 \: (n(a))^{3d}\] holds for all $a \in G \backslash \{1\}$. Since $\varphi_a(k_{a}) \neq 1$, we may write
   its image as a tuple
   \[\varphi_a(k_{a}) = (\alpha_{i})_{i=1}^{\ell_a}\in H_a^{\ell_a},\] where $\alpha_{i_0} \neq 1$ for some $1 \leq i_0 \leq \ell_a.$ In particular, if $\lambda \colon H_a^{\ell_{a}} \longrightarrow H_a$ is the projection onto the $i_0^{th}$ factor, then $\lambda \circ \varphi_a(k_{a}) \neq 1$. Hence, Lemma \ref{lem:LCM_nonvanishing} implies $\lambda \circ \varphi_a(a_0^j) \neq 1$ for $1 \leq j \leq n(a).$ Therefore, \[n(a) \leq |\left<\lambda \circ \varphi_a(k_{a})\right>| \leq |H_a|.\]
   Raising to the $\ell_a$-th power, we see that
   \[
   (n(a))^{\ell_a} \leq |H_a|^{\ell_a}= |H_a^{\ell_a}| \leq C_4 \: (n(a))^{3d}.
   \]
   Hence, 
   \[
   \ell_a\log (n(a)) \leq \log C_4+3d\log(n(a)),
   \]
   and so $\ell_a\leq 3d+C_6$ for a suitable constant $C_6$ that is
   independent of $a$. Since this inequality holds for
   all $a \in G \backslash \{1\}$, we see that the set $\{\ell_a\}_{a \in G \backslash\{1\}}$ is bounded by a constant $C_7 = C_7(X)$.
   It follows that $\mathcal H$ has bounded multiplicity. That the
   ranks of automorphism groups of elements of $\mathcal H$ are universally bounded follows
   from the fact that each element of $\mathcal H$ is $e$--extension-bounded,
   and from Lemma~\ref{lem:rank_inequality1}.
   \end{proof}

Thus we obtain:
\begin{cor}\label{cor:poly imply linear}
     Let $G$ be a finitely generated uniformly malabelian group with an infinite order element, and suppose that $A \leq \Out(G)$ is a finitely generated group.
     Let $\mathcal{F}$ be a set of finite products of nonabelian finite simple groups of Lie type that are $e$--extension-bounded for some
     $e\in \N$. If 
     \[
     \RF_{G, \mathcal{F}^A}(n) \preceq n^d
     \] 
     where $d\in \mathbb{N}$, then there exists an injective homomorphism $\varphi \colon \Gamma_{G,A} \longrightarrow \GL_\ell(\mathbb{K})$ for some field $\mathbb{K}$ and $\ell \in \mathbb{N}$.
\end{cor}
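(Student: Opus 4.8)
The plan is to combine Proposition~\ref{prop:polynomial implies bounded rank} with an ultraproduct argument, using the malabelian hypothesis to pass from residual to \emph{fully} residual separation.

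First I would reduce to the case where $\mathcal F$ is factor-closed: replacing $\mathcal F$ by the family of all finite products of nonabelian finite simple groups of Lie type whose simple factors occur among those of members of $\mathcal F$ only enlarges the collection of admissible quotients — hence preserves the bound $\RF_{G,\mathcal F^A}(n)\preceq n^d$ — and preserves $e$--extension-boundedness. Proposition~\ref{prop:polynomial implies bounded rank} then produces a constant $R>0$ and an $e$--extension-bounded, bounded-multiplicity subfamily $\mathcal H\subseteq\mathcal F$ such that $G$ is residually-$\mathcal H^A$ and $r(\Aut(H))\leq R$ for every $H\in\mathcal H$. Since $G$ is malabelian, Lemma~\ref{lem:malabelian residually F implies fully residually F} upgrades this to: $G$ is \emph{fully} residually-$\mathcal H^A$.

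Next I would promote this to a statement about $\Gamma_{G,A}$, strengthening Lemma~\ref{lem:bound rank A to bound rank Aut(A)}: I claim $\Gamma_{G,A}$ is fully residually-$\mathcal C$, where $\mathcal C=\{\Aut(Q):Q\in\mathcal H\}$. Indeed, given nontrivial $\gamma_1,\dots,\gamma_m\in\Gamma_{G,A}\le\Aut(G)$, each $\gamma_i$ is a non-identity automorphism of $G$ (when $\gamma_i\in\Inn(G)$ this uses $Z(G)=\{1\}$), so there is $h_i\in G$ with $x_i:=h_i^{-1}\gamma_i(h_i)\ne 1$. Applying full residuality of $G$ to the finite set $\{x_1,\dots,x_m\}\subseteq G\setminus\{1\}$ yields a single $\Gamma_{G,A}$-invariant $N\trianglelefteq G$ with $Q=G/N\in\mathcal H$ and all $x_i\notin N$; then the conjugation action $\Gamma_{G,A}\to\Aut(Q)$ sends each $\gamma_i$ to a map $\overline{\gamma_i}$ with $\overline{\gamma_i}(h_iN)=\gamma_i(h_i)N\ne h_iN$, hence to a nontrivial element. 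Since $r(\Aut(H))\le R$ on $\mathcal H$, the family $\mathcal C$ has projective rank bounded by $R$.

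Finally, $\Gamma_{G,A}$ is countable (indeed finitely generated, being an extension of $A$ by $G$), so I would enumerate its nontrivial elements $\gamma_1,\gamma_2,\dots$ and choose, for each $n$, a homomorphism $\psi_n\colon\Gamma_{G,A}\to C_n\in\mathcal C$ separating $\gamma_1,\dots,\gamma_n$. Then each set $\{n:\psi_n(\gamma_k)\ne 1\}$ is cofinite, so for any non-principal ultrafilter $\omega$ on $\mathbb N$ the induced map $\Gamma_{G,A}\to C_\omega=(\prod_n C_n)_\omega$ is injective; Lemma~\ref{lem:ultraproduct representation}, applicable since $\{C_n\}_{n\in\mathbb N}$ has bounded projective rank, embeds $C_\omega$ in $\GL_\ell(\mathbb K)$ for some $\ell$ and some field $\mathbb K$, and composing gives $\Gamma_{G,A}\hookrightarrow\GL_\ell(\mathbb K)$. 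The main obstacle is the middle step: mere residual finiteness of $\Gamma_{G,A}$ does not feed the ultraproduct machinery, and it is exactly the malabelian hypothesis — via common multiples and Lemma~\ref{lem:malabelian residually F implies fully residually F} — that supplies the simultaneous separation of finitely many elements; one also has to take a little care with the elements of $\Gamma_{G,A}$ lying outside $\Inn(G)$.
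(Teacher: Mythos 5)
Your proof is correct and follows the same route as the paper's: reduce to a factor-closed family, apply Proposition~\ref{prop:polynomial implies bounded rank} to get a bounded-rank family $\mathcal H$, pass to automorphism groups, and conclude with the ultraproduct embedding of Lemma~\ref{lem:ultraproduct representation}. The one point where you add substance is the final step: the paper obtains the representation ``immediately'' from Lemma~\ref{lem:ultraproduct representation}, but that lemma only embeds the ultraproduct itself, and to embed $\Gamma_{G,A}$ into an ultraproduct of the quotients one needs the separating index sets to have the finite intersection property --- i.e.\ \emph{fully} residual separation, which you correctly supply via Lemma~\ref{lem:malabelian residually F implies fully residually F} together with your strengthened form of Lemma~\ref{lem:bound rank A to bound rank Aut(A)} (including the observation that triviality of the center handles inner automorphisms). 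This extra care is not redundant; it fills in exactly what the paper's one-line conclusion leaves implicit.
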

\begin{proof}
    Clearly we may assume that $\mathcal F$ is factor-closed.
    By Proposition~\ref{prop:polynomial implies bounded rank}, we
    have that $G$ is residually $\mathcal H^A$, where $\mathcal H
    \subseteq\mF$ consists of powers of finite simple groups of Lie type
    of the form $H^{\ell}$, and so that:
    \begin{enumerate}
        \item there is a universal bound on the multiplicity
        for all elements
        of $\mathcal H$;
        \item there is a universal bound on the rank of the
        automorphism group of each element of $\mathcal H$.
    \end{enumerate}
    By Lemma~\ref{lem:bound rank A to bound rank Aut(A)},
    we have that $\Gamma_{G,A}$ is residually $\mathcal A$, where
    $\mathcal A$ consists of automorphism groups of elements of
    $\mathcal H$.
    We obtain a faithful linear representation of $\Gamma_{G,A}$
    immediately from Lemma \ref{lem:ultraproduct representation}. 
\end{proof}

\subsection{Linearity to growth}

In this section, we let $\mathcal F$ denote finite products of finite
simple groups of Lie type. If $e\in\N$, we write $\mathcal F_e\subseteq
\mathcal F$ for the elements of $\mathcal F$ which are $e$--extension-bounded.

\begin{thm}\label{thm:rep_poly_growth}
    Let $G$ be a finitely generated  uniformly malabelian group, and suppose that $A \leq \Out(G)$ is a finitely generated subgroup.
Suppose that $\Gamma_{G,A}$ has a faithful representation \[\varphi \colon \Gamma_{G,A} \longrightarrow \GL_\ell(\mathbb{K})\] for some field
$\mathbb K$.
    Then there exists a finite index characteristic subgroup $G_\ell \trianglelefteq G$ and a natural number $d$ such that
    \[
    \RF_{G_\ell, \mathcal{F}^{\Gamma_{G,A}/G_\ell}}(n) \preceq n^{d}.
    \]
    Moreover, if $\mathbb K$ has characteristic zero then there is an
    $e\in\N$ such that 
    \[
    \RF_{G_\ell, \mathcal{F}_e^{\Gamma_{G,A}/G_\ell}}(n) \preceq n^{d}.
    \]
\end{thm}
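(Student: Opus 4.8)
The plan is to build, for each nontrivial $g$ in a suitable finite index characteristic subgroup, a controlled finite quotient that is a product of finite simple groups of Lie type and on which $g$ survives, by combining the uniformly malabelian hypothesis, localization of the linear representation, reduction modulo a controlled prime (or maximal ideal), and the Larsen--Pink structure theorem. Let $J(\ell)$ be the constant of Theorem~\ref{thm:larsen-pink} and $m_0=\lceil C\log(\ell)\rceil$ the bound on the derived length of a solvable subgroup of $\GL_\ell$ from Proposition~\ref{prop:lin-solvable}. I would set
\[
G_\ell=\bigcap\{H\le G : [G:H]\le J(\ell)\},
\]
which is a finite index characteristic subgroup of $G$ (finite index since $G$ is finitely generated, characteristic since $\Aut(G)$ permutes subgroups of each fixed finite index). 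Fixing a finite generating set $X$ of $G$ and a uniformly malabelian constant $\kappa$, and noting that word lengths in $G_\ell$ are comparable to those in $G$, it suffices to produce, for every nontrivial $g\in G_\ell$ with $\|g\|_X\le n$, an epimorphism of $G_\ell$ onto a finite product of finite simple groups of Lie type, with $\Gamma_{G,A}$--invariant kernel, of order $\preceq n^{d}$, not killing $g$.

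Given such a $g$, I would first apply Lemma~\ref{lem:bound length of element in D^n(G)} to $G$ itself with $a=g$ and $n=m_0$, obtaining a nontrivial $w=w_{m_0,g}\in D^{m_0}(G)$ with $\|w\|_X\le 8^{m_0}\max\{\|g\|_X,\kappa\}=O(\|g\|_X)$. Since $\varphi$ is faithful and $w\neq 1$, some entry of $\varphi(w)-I$ is a nonzero element of $\GL_\ell(\mathbb K)$; using Lemma~\ref{lem:polynomial-localization} I may assume $\varphi\colon\Gamma_{G,A}\hookrightarrow \GL_\ell\!\left(R[\frac{1}{S}][T_1,\ldots,T_s]\right)$ with $R\in\{\Z,\mathbb F_p\}$. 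After clearing the denominators from $S$, the element $w$ contributes a nonzero polynomial $P\in R[T_1,\ldots,T_s]$ of total degree $O(\|w\|_X)$ (and, when $R=\Z$, with $\log$ of its coefficients of size $O(\|w\|_X\log\|w\|_X)$), by the degree and coefficient estimates of Lemmas~\ref{lem: degree bound char 0} and~\ref{lem:magnitude polynomial coefficients} together with the one-variable reduction of Lemma~\ref{lem:reduction to one variable}. Applying Lemma~\ref{lem: integer polynomial nonvanishing} in characteristic zero, or Lemma~\ref{lem: F_p polynomial nonvanishing} in characteristic $p$, to the product of $P$ with the polynomials of $S$, I obtain a quotient $R[\frac{1}{S}][T_1,\ldots,T_s]\longrightarrow\mathbb F$ onto a finite field $\mathbb F$ not killing $P$, with $|\mathbb F|\preceq \|w\|_X^{\,c}\preceq n^{c}$, where $c$ depends only on $\ell,s$ (and, in characteristic $p$, on the fixed $p$). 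Composing gives $\pi_w\colon\Gamma_{G,A}\longrightarrow Q_w\le \GL_\ell(\mathbb F)$ with $\pi_w(w)\neq 1$ and $|Q_w|\le |\mathbb F|^{\ell^2}\preceq n^{d}$.

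Next I would apply Theorem~\ref{thm:larsen-pink} to $Q_w$ to get a Larsen--Pink triple $(Q_1,Q_2,Q_3)$ and pull it back along $\pi_w$. Since $[\Gamma_{G,A}:\pi_w^{-1}(Q_1)]=[Q_w:Q_1]\le J(\ell)$, also $[G:G\cap\pi_w^{-1}(Q_1)]\le J(\ell)$, so by definition $G_\ell\le\pi_w^{-1}(Q_1)$, i.e.\ $\pi_w(G_\ell)\le Q_1$; moreover $\pi_w(G_\ell)\trianglelefteq Q_w$ because $G_\ell\trianglelefteq\Gamma_{G,A}$. The group $Q_2$ is solvable (an extension of the abelian $Q_2/Q_3$ by the $p$--group $Q_3$), of derived length at most $m_0$ by Proposition~\ref{prop:lin-solvable}, so $D^{m_0}(Q_2\cap\pi_w(G))=\{1\}$; hence part~(3) of Lemma~\ref{lem:bound length of element in D^n(G)} applied to $\pi_w|_G$ shows that if $\pi_w(g)\in Q_2$ then $\pi_w(w)\in D^{m_0}(Q_2\cap\pi_w(G))=\{1\}$, contradicting $\pi_w(w)\neq 1$. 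Therefore $\pi_w(g)\in Q_1\setminus Q_2$, so the image of $\pi_w(g)$ in $Q_1/Q_2$ is nontrivial. Since $Q_1/Q_2$ is a direct product of nonabelian finite simple groups of Lie type and $\pi_w(G_\ell)$ is normal, $S_g:=\pi_w(G_\ell)Q_2/Q_2$ is a subproduct of these factors, hence itself a finite product of finite simple groups of Lie type, i.e.\ $S_g\in\mathcal F$. The composite $G_\ell\twoheadrightarrow S_g$ has kernel $G_\ell\cap\pi_w^{-1}(Q_2)\trianglelefteq\Gamma_{G,A}$ (so it is $\Gamma_{G,A}/G_\ell$--invariant), does not contain $g$, and $|S_g|\le|Q_w|\preceq n^{d}$. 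This gives $\RF_{G_\ell,\mathcal F^{\Gamma_{G,A}/G_\ell}}(n)\preceq n^{d}$. For the characteristic zero refinement, the reduction above lands over the prime field, $Q_w\le\GL_\ell(\mathbb F_p)$, so each simple factor of $Q_1/Q_2$, being a subquotient of $Q_w$ and hence a quotient of a subgroup of $\GL_\ell(\mathbb F_p)$, has parameter $q'=p^{n'}$ with $n'\le\binom{\ell}{2}$ by Corollary~\ref{cor:lin-extension-bound}; thus $S_g$ is $e$--extension-bounded for $e=\binom{\ell}{2}$, yielding $\RF_{G_\ell,\mathcal F_e^{\Gamma_{G,A}/G_\ell}}(n)\preceq n^{d}$.

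I expect the main obstacle to be the quantitative bookkeeping in the middle step: one must carry total degrees and coefficient magnitudes through the localization, the multivariate-to-univariate reduction, and the prime/maximal-ideal bounds, uniformly in $g$, so that $|Q_w|$ is bounded by a fixed power of $n$; special care is needed because the substitution reducing to one variable depends on the element $w$. The conceptual point that makes the extraction go through—rather than a difficulty—is that $w_{m_0,g}\in D^{m_0}(G)$ surviving in $Q_w$, combined with the derived length of $Q_2$ being at most $m_0$, is precisely what forces $\pi_w(g)$ out of the solvable radical $Q_2$ and into a genuine Lie-type quotient.
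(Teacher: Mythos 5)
Your proposal is correct and follows essentially the same route as the paper: the same choice of $G_\ell$ via the Larsen--Pink constant $J(\ell)$, the same use of Lemma~\ref{lem:bound length of element in D^n(G)} with the solvable derived-length bound to produce $w\in D^{m_0}(G)$, the same specialization of the localized matrix entries to a finite field of polynomially bounded size, and the same extraction of the Lie-type quotient $Q_1/Q_2$. The one (minor, and arguably cleaner) deviation is that you apply Larsen--Pink to $\pi_w(\Gamma_{G,A})$ rather than to $\pi_w(G)$, so that $G_\ell\cap\pi_w^{-1}(Q_2)$ is automatically $\Gamma_{G,A}$-invariant and you avoid the paper's final passage to the $A$-invariant core of the kernel.
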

\begin{proof}
     Let $G_{\ell}$ be the intersection of all finite index subgroups
     of $G$ of index at most $J(\ell)$; see Theorem~\ref{thm:larsen-pink}. 
    Let $X$ be a finite generating set for $\Gamma_{G,A}$ which includes a finite generating set $Y$ for $G_\ell$ and a finite generating set $Z$ for $G$; thus we have inclusions $Y\subseteq Z\subseteq X$. 
    
    By Lemma~\ref{lem:polynomial-localization}, taking $\mathfrak R = \mathbb{Z}[T_1, \ldots, T_s]$ or $\mathbb{F}_p[T_1, \ldots, T_s]$ and $R\in\{\Z,\mathbb F_p\}$ depending on the characteristic of the defining field, there exists a finite subset $S \subset \mathfrak R$ consisting
    of nonzero elements
    such that \[\Gamma_{G,A} \leq \GL_{\ell}\left(R\left[\frac{1}{S}\right]\left[T_1, \ldots, T_s\right]\right).\] 
    
    Suppose first that \[\Gamma_{G,A} \leq \GL_\ell\left(\mathbb{Z}\left[\frac{1}{S}\right][T_1,\ldots
    T_s]\right).\] Let $\Phi$ be the product of all of the denominators of matrix coefficients of elements in $X$. Write $D = \Phi \cdot \text{Id}_{\ell \times \ell}$, and let $a \in G_\ell$ be a nontrivial element. Let $\kappa  = \kappa(Z)$ be the uniformly malabelian constant of $G$ with respect to $Z$.
    
    Lemma \ref{lem:bound length of element in D^n(G)} and
    Proposition~\ref{prop:lin-solvable} together imply there exists
    a universal constant $C_2$ and an element $h \in D^{C_1 \lceil\log(\ell)\rceil  +1}(G)$ satisfying
        \begin{enumerate}
    \item $\|h\|_Z \leq 8^{C_1  \log(\ell) + 1}\max\{\|a\|_Z, \kappa\}$;
    \item If $\varphi \colon G \longrightarrow Q$ is an epimorphism where $\varphi(h) \neq 1$, then $\varphi(a) \neq 1$;
    \item If $\varphi \colon G \longrightarrow Q$ is an epimorphism and $N$ is a normal subgroup of $Q$ such that
    $\varphi(a) \in N,$ then $\varphi(h) \in D^{C_1  \lceil\log(\ell)\rceil + 1}(N)$.
    \end{enumerate}
    Moreover, there is a constant $C_2  > 0$ such that $\|h\|_X \leq C_2\|a\|_Z$. Writing $h = (h_{ij})$ as a matrix,
    Lemma \ref{lem: degree bound char 0} implies that there exists a constant $K = K(X)$ such that
    $$
    \max\{\deg(\Phi^{\|h\|_X} h_{ij}) \: : \: 1 \leq i,j \leq \ell\} \leq K C_2\|a\|_Z.
    $$
    Thus,
    $$
    \max\{ \deg(\Phi^{\|h\|_X} h_{ij} - \Phi^{\|h\|_X}\delta_{ij}) \: : \: 1 \leq i,j \leq \ell\} \leq KC_2 \|a\|_Z,
    $$
    where here $\delta_{ij}$ denotes the Kronecker delta function.
    
    Since $h \neq \text{Id}_{\ell \times \ell},$ there exist indices $i_0$ and $j_0$ such that 
    \[
    f =\Phi^{\|h\|_X} h_{i_0j_0} - \Phi^{\|h\|_X}\delta_{i_0j_0} \neq 0.
    \]
    Lemma \ref{lem:reduction to one variable} implies the existence of a
    sequence of natural numbers $(n_1, \ldots, n_s)$ contained in
    $\{0, 1, \ldots,  (KC_2\|a\|_Z)^{2s} \}$ such that if $\tau$ is an indeterminate, then $g(\tau) = f(\tau^{n_1}, \ldots, \tau^{n_s}) \neq 0$, and
    $\deg(g) \leq (KC_2 \|a\|_Z)^{2s+1}$.

    Viewing $\Phi$ as a function of $\{T_1,\ldots,T_s\}$, we note that if
    $\Phi(\tau^{n_1}, \ldots, \tau^{n_s})$ vanishes identically then
    $f$ also vanishes identically. It follows that $\Phi$ does not vanish under
    the substitution of powers of $\tau$, and so neither can the
    denominators of any of the matrix entries in $X$.

    It follows that the evaluation map \[\psi\colon \Z[T_1,\ldots, T_s]
    \longrightarrow \Z[\tau]\] defined by 
    $$
    \psi(w[T_1, \ldots, T_s]) = w[\tau^{n_1}, \ldots, \tau^{n_s}]
    $$
    sends elements of $S$ to a collection $S'$ of 
    nonzero elements in the target, whence
    one obtains a well-defined extended evaluation map
    \[\psi \colon \Z\left[\frac{1}{S}\right][T_1, \ldots, T_s] \longrightarrow \Z\left[\frac{1}{S'}\right][\tau]\] and a
    group homomorphism
    \[\bar{\psi} \colon \GL_\ell\left(\mathbb{Z}\left[\frac{1}{S}\right][T_1, \ldots, T_s]\right) \longrightarrow \GL_\ell\left(\mathbb{Z}\left[\frac{1}{S'}\right][\tau]\right).\] 
    In particular, we have $\bar{\psi}(h) \neq 1$ since $\psi(g) \neq 1.$ Additionally, we see that $\|\bar{\psi}(h)\|_{\bar{\psi}(X)} \leq \|h\|_X \leq C_2\|a\|_Z.$
    
    Fix an arbitrary bound on the coefficients of $\Phi$ (which depends only
    on $X$), and consider
    a substitution map of the form $w(T_1, \ldots, T_s) \longrightarrow w(\tau^{n_1}, \ldots, \tau^{n_s})$. Notice that the coefficients of  $\bar{\psi}(\Phi)$ will be bounded by a constant $C_3$ that
    depends only on the bounds of the coefficients of $\Phi$ and on
    $s$. Writing \[g(\tau) = a_0 + a_1\tau + \cdots +a_d \tau^d\] with
    the bound $d \leq (KC_2\|a\|_Y)^{2s+1}$, Lemma~\ref{lem: degree bound char 0} and Lemma \ref{lem:magnitude polynomial coefficients} imply
    the existence of a constant $K'$ such that
    $$
    |a_i| \leq (2K'\cdot C_3 \cdot \ell)^{KC_2\|a\|_Y}\cdot (\|a\|_Y)!.
    $$
    Lemma \ref{lem: integer polynomial nonvanishing} implies that
    there exists an integer $0 \leq t \leq (KC_2\|a\|_Y)^{2s+1} + 1$ and a prime $p$ such that
    $$
    g(t) \neq 0 \pmod p,
    $$
    and such that 
    \begin{eqnarray*}
    p &\leq& C_4\left(\log((2K'\cdot C_3 \cdot \ell)^{KC_2 \|a\|_Y}\cdot(\|a\|_Y)!)+(2s+2)(KC_2 \|a\|_Y)^{(2s+1)(2s+2)}\right)\\
    &\leq& C_4\left(  \left(KC_2 \|a\|_Y\right) (\log(K' \cdot C_3 \cdot \ell)\cdot \log((\|a\|_Y)!) + (2s+2)(KC_2\|a\|_Y)^{(2s+1)(2s+2)} \right);
    \end{eqnarray*}
    here, the constant $C_4 = C_4(s)$ depends on $s$ alone. Since (up to 
    a multiplicative constant) we have
    \[\log((\|a\|_Y)!)\leq \|a\|_Y\cdot\log(\|a\|_Y)\leq (\|a\|_Y)^2,\] we see that
    there exists a natural number $M$ and a constant $C_5 = C_5(X)$ such that
    $$
    p \leq C_5(\|a\|_Y)^M.
    $$
    Observe that if $\bar{\psi}(\Phi)(t) = 0\pmod p$, then
    \begin{eqnarray*}
    g(t) &=& \bar{\psi}(\Phi^{\|h\|_X} h_{i_0j_0} - \Phi^{\|h\|_X}\delta_{i_0j_0})(t)\pmod p\\
    &=& \bar{\psi}(\Phi^{\|h\|_X})(t) \cdot \bar{\psi}(h_{i_0j_0} - \delta_{i_0j_0})(t)\pmod p\\
    &=& 0\pmod p,
    \end{eqnarray*}
    which is a contradiction. In particular, the polynomial
    $\bar{\psi}(\Phi)(\tau)$ is nonzero modulo $p$.
    
    Hence, the ring map $\lambda \colon \Z[\tau] \longrightarrow \mathbb{F}_p$ given by $\lambda(w) = w(t) \pmod p$ is well defined and has the property
    that $\lambda(s)\neq 0$ for all $s\in S'$; in particular $\lambda$
    extends to a ring homomorphism \[\lambda \colon \Z\left[\frac{1}{S'}\right][\tau] \longrightarrow \mathbb{F}_p,\] and induces a homomorphism of
    general linear groups \[\bar{\lambda} \colon \GL_\ell\left(\mathbb{Z}\left[\frac{1}{S'}\right][\tau]\right) \longrightarrow \GL_\ell(p).\] Thus, we have an induced map $(\bar{\lambda} \circ \bar{\psi})|_{\Gamma_{G,A}} \colon \Gamma_{G,A} \longrightarrow \GL_\ell(p)$, for which the subgroup 
    $$
    (\ker(\bar{\lambda} \circ \bar{\psi}) \cap \Gamma_{G,A})
    $$
    is a normal subgroup of $\Gamma_{G,A}$ not containing the element
    $h$. Thus, 
    $$
    \ker((\bar{\lambda} \circ \bar{\psi})|_{G_\ell}) = G_\ell \cap (\ker(\bar{\lambda} \circ \bar{\psi}) \cap \Gamma_{G,A})
    $$
    is $\Gamma_{G,A}$-invariant since both $G_\ell$ and $(\ker(\bar{\lambda} \circ \bar{\psi}) \cap \Gamma_{G,A})$ are $\Gamma_{G,A}$-invariant. Letting $(Q_1, Q_2, Q_3)$ be a Larsen-Pink triple for $Q = \bar{\lambda} \circ \bar{\psi}(G)$, we see that $\bar{\lambda} \circ \bar{\psi}(G_{\ell})\leq Q_1$. To see this, note that $Q/Q_1$ has order at most
    $J(\ell)$ by the definition of a Larsen--Pink triple. Since $G_{\ell}$ is defined as the intersection of all
    subgroups of $G$ of index at most $J(\ell)$, we have that $G_{\ell}$
    is contained in the kernel of the composition \[G\longrightarrow
    Q\longrightarrow Q/Q_1.\]
    
    Moreover,
    $\bar{\lambda} \circ \bar{\psi}(h)$ is nontrivial, so that
    $\bar{\lambda} \circ \bar{\psi}(a)\notin Q_2$; thus 
    $q \circ \bar{\lambda} \circ \bar{\psi}(a) \neq 1$, where here $q \colon Q_1 \longrightarrow Q_1/Q_2$ is the natural projection. By construction, we have $Q_1/Q_2$ is a nontrivial product of nonabelian finite simple groups in characteristic $p$. We observe that
    $$
    \ker((\bar{\lambda} \circ \bar{\psi})|_{G_\ell}) \leq \ker(q \circ (\bar{\lambda} \circ \bar{\psi})|_{G_\ell}).
    $$
    Since $\ker((\bar{\lambda} \circ \bar{\psi})|_{G_\ell})$ is invariant under the conjugation action of $\Gamma_{G, A}$, we have \[\ker((\bar{\lambda} \circ \bar{\psi})|_{G_\ell}) \leq g^{-1}(\ker(q \circ (\bar{\lambda} \circ \bar{\psi})|_{G_\ell}))g,\] where here $g\in \Gamma_{G,A}$ is arbitrary. Therefore,
     $$
    \ker((\bar{\lambda} \circ \bar{\psi})|_{G_\ell}) \leq \bigcap_{g \in \Gamma_{G,A}}g^{-1}(\ker(q \circ (\bar{\lambda} \circ \bar{\psi})|_{G_\ell}))g = (\ker(q \circ (\bar{\lambda} \circ \bar{\psi})|_{G_\ell}))_A.
    $$
    Finally, we see that 
    $$
    \left|G_\ell / (\ker(q \circ (\bar{\lambda} \circ \bar{\psi})|_{G_\ell}))_A\right| \leq p^{\ell^2} \leq C_5^{\ell^2}(\|a\|_Y)^{\ell^2M},
    $$
    as desired.

    For the positive characteristic case, we proceed in the same way, using
    Lemma \ref{lem: F_p polynomial nonvanishing} instead of Lemma \ref{lem: integer polynomial nonvanishing} and the corresponding positive-characteristic version of Lemma \ref{lem:magnitude polynomial coefficients}.

    In the case of characteristic zero, the semisimple-type quotients
    we obtain are $e$--extension-bounded for some $e$ depending only
    on $\ell$, by Corollary~\ref{cor:lin-extension-bound}.
\end{proof}

Combining Theorem~\ref{thm:rep_poly_growth} and Proposition~\ref{prop:polynomial implies bounded rank}, we obtain
Theorem~\ref{thm:main-growth}.

\section*{Acknowledgements}
The authors thank Ian Agol,
Ian Biringer, Tara Brendle, Emmanuel Breuillard, Martin Bridson, Jack Button, Asaf Hadari, Scott Harper, Faye Jackson, Dawid Kielak, Antonio L\'{o}pez Neumann, Dan Margalit, Curt McMullen, Ben McReynolds, Andrei Rapinchuk, and Andreas Thom   for helpful conversations and email correspondences. The authors thank an anonymous referee for helpful comments.

The first author was partially
supported by NSF grants DMS-2002596 and DMS-2349814, and by
Simons Foundation International Grant SFI-MPS-SFM-00005890 while this
research was carried out. The second author is supported by National Science Center Grant Maestro-13 UMO-2021/42/A/ST1/00306, and was supported
by a postdoctoral fellowship under NSF RTG grant DMS-1839968.

\section*{Conflict of Interest Statement}
On behalf of all authors, the corresponding author states that there is no conflict of interest.

\bibliography{bibs}
\bibliographystyle{plain}

\end{document}